\theoremstyle{definition}
 \newtheorem{dfn}{Definition}
 \newtheorem{remark}[dfn]{Remark}
\theoremstyle{plain}
 \newtheorem{thm}[dfn]{Theorem}
 \newtheorem{lem}[dfn]{Lemma}
 \newtheorem{cor}[dfn]{Corollary}
\numberwithin{equation}{section}
\newcommand{\bu}{{\bold u}}
\newcommand{\bv}{{\bold v}}
\newcommand{\bw}{{\bold w}}
\newcommand{\ba}{{\bold a}}
\newcommand{\bff}{{\bold f}}
\newcommand{\bG}{{\bold G}}
\newcommand{\bI}{{\bold I}}
\newcommand{\bS}{{\bold S}}
\newcommand{\bT}{{\bold T}}
\newcommand{\bU}{{\bold U}}
\newcommand{\dv}{{\rm div}\,}
\newcommand{\BA}{{\Bbb A}}
\newcommand{\BR}{{\Bbb R}}
\newcommand{\BC}{{\Bbb C}}
\newcommand{\BN}{{\Bbb N}}
\newcommand{\BG}{{\Bbb G}}
\newcommand{\BI}{{\Bbb I}}
\newcommand{\BF}{{\Bbb F}}
\newcommand{\BZ}{{\Bbb Z}}
\newcommand{\CA}{{\mathcal A}}
\newcommand{\CD}{{\mathcal D}}
\newcommand{\CE}{{\mathcal E}}
\newcommand{\CL}{{\mathcal L}}
\newcommand{\CR}{{\mathcal R}}
\newcommand{\CS}{{\mathcal S}}
\newcommand{\CH}{{\mathcal H}}
\newcommand{\bg}{{\bold g}}
\newcommand{\bh}{{\bold h}}
\newcommand{\pd}{\partial}
\newcommand{\HS}{\BR^N_+}
\newcommand{\WS}{\BR^N}
\renewcommand{\d}{\,\mathrm{d}}
\begin{document}
\title[]{$L_1$ approach to the compressible viscous fluid flows 
in the half-space}

\author[]{Jou chun Kuo}
\address{(J. Kuo) {School of Science and Engineering},
Waseda University \\
3-4-5 Ohkubo Shinjuku-ku, Tokyo, 169-8555, Japan }		
\email{kuojouchun@asagi.waseda.jp} 
\author[]{Yoshihiro Shibata}
\address{(Y. Shibata) 
{ Emeritus of Waseda University \\
Adjunct faculty member in the Department of Mechanical Engineering 
and	Materials Science, University of Pittsburgh, USA
}}
\email{yshibata325@gmail.com}

\subjclass[2010]{
{Primary: 35Q30; Secondary: 76N10}}

\thanks{The second author was partially supporte by 
and the second author was partially supported by JSPS KAKENHI 
Grant Number 22H01134  and Top Global University Project}

\keywords{Navier--Stokes equations;  
maximal $L_1$-regularity, local wellposedness}

\date{\today}   

\maketitle


\begin{abstract}
In this paper, we prove the local well-posedness for the Navier-Stokes equations describing the motion of
isotropic barotoropic compressible viscous fluid flow
in the half-space $\BR^N_+ = \{x = (x_1,\ldots, x_d) \in \BR^N \mid x_d>0\}$ with 
non-slip boundar condition, where $\HS$ is  the 
fluid domain.   
The density part of our solutions belongs to  $W^1_1((0, T), B^s_{q,1}(\HS))
\cap L_1((0, T), B^{s+1}_{q,1}(\HS))$ and the velocity part  of our solutions  
$W^1_1((0, T), B^{s}_{q,1}(\HS)^N) \cap L_1((0, T), B^{s+2}_{q,1}(\HS)^N)$, 
where $B^\mu_{q,1}(\HS)$ denotes the inhomogeneous Besov
space on $\HS$. Namely, we solve the equations in  
the $L_1$ in time and $B^{s+1}_{q,1}(\HS) \times B^s_{q,1}(\HS)^N$ in space maximal regularity
framework. 
We use Lagrange transformation to eliminate the convection term $\bv\cdot\nabla\rho$ and 
we use an analytic semigroup approach.  
We only assume  the strictly positiveness of initial mass density. 
An essential assumption is that  $-1+N/q \leq  s < 1/q$ if $N-1 < q  < 2N$ and 
$-N/q < s < 1/q$ if $q \geq 2N$, where $N/q$ is the crucial 
order to obtain  $\|\nabla \bu\|_{L_\infty} \leq C\|\nabla\bu\|_{B^{N/q}_{q,1}}$. 
\end{abstract}

\section{Introduction}

Let $1 < q < \infty$ and $-1+N/q \leq s < 1/q$, where $N$ is the space dimension. 
In this paper, we use the  $L_1$--$B^{s+1}_{q,1}\times B^s_{q,1}$ maximal regularity framework to show
 the local well-posedness of the Navier-Stokes
equations describing the isotropic motion of the compressible viscous fluid flows
in the half-space. Let
$$\HS=\{x=(x_1, \ldots, x_N) \in \BR^N \mid x_N > 0\}, 
\quad
\pd\HS= \{x=(x_1, \ldots, x_N) \in \BR^N \mid x_N = 0\}.
$$
The equations considered in this paper read as
\begin{equation}\label{ns:1}\left\{\begin{aligned}
\rho_t + \dv(\rho\bv) = 0&&\quad&\text{in $\HS\times(0, T)$}, \\
\rho(\bv_t + \bv\cdot\nabla\bv) - \alpha\Delta \bv - \beta\nabla\dv\bv
+ \nabla P(\rho) = 0&&\quad&\text{in $\HS\times(0, T)$}, \\
\bv|_{\pd\HS} = 0, \quad(\rho, \bv) =(\rho_0, \bv_0)
&&\quad&\text{in $\HS$}.
\end{aligned}\right.\end{equation}
Here, 
$\alpha$ and $\beta$ denote respective
the viscosity coefficients and the second viscosity 
coefficients satisfying the conditions
\begin{equation}\label{assump:1.1}
\alpha > 0, \quad \alpha+\beta >0,
\end{equation}
and 
$P(\rho)$ is a smooth function defined on $(0, \infty)$ satisfying $P'(\rho) > 0$, 
that is,  the barotropic fluid is considered.

\par
The main result of this paper is the following theorem. 
\begin{thm}\label{thm:1} Let $1 < q < \infty$ and $-1+1/q < s < 1/q$.  Moreover,
we assume that 
\begin{equation}\label{assump:s}\left\{\begin{aligned}
-1 + \frac{N}{q} \leq &\,s < \frac1q &\quad &\text{ for $N-1 < q < 2N$}, \\
-\frac{N}{q} < &\,s <\frac1q &\quad&\text{ for $q \geq 2N$}.
\end{aligned}\right.\end{equation}
Let $\eta_0$ be a  function satisfying the following two conditions:
\begin{enumerate}
\item For some positive constants $\rho_1$ and $\rho_2$, it holds 
\begin{equation}\label{assump:0}
\rho_1 < \eta_0(x) < \rho_2, \quad  \rho_1 < P'(\eta_0(x)) < \rho_2
\quad(x \in \overline{\HS}).
\end{equation}
\item 
There exist a positive constant $\rho_*$ and a function   $\tilde\eta_0
\in B^{s+1}_{q,1}(\HS)$ such that $\eta_0 = \gamma_* + \tilde\eta_0$.
\end{enumerate}
Here and in the sequel, $\rho_*$ is a positive constant describing the mass density of the 
reference domain $\HS$, and $B^\mu_{q,p}$ denotes the standard Besov space. \par 
Then, 
there exist  small numbers $T>0$ and $\sigma>0$  such that 
for any initial data 
$\rho_0= \gamma_*+\tilde\rho_0$ with $\tilde\rho_0
 \in B^{s+1}_{q,1}(\HS)$ and $\bv_0 \in B^s_{q,1}(\HS)$, 
problem \eqref{ns:1} admits unique solutions $\rho$ and $\bv$
satisfying the regularity conditions:
\begin{equation}\label{main.reg} \begin{aligned}
\rho-\rho_0 &\in L_1((0, T), B^{s+1}_{q,1}(\HS)) \cap W^1_1((0, T), B^s_{q,1}(\HS)),
\\
\bv &\in L_1((0, T), B^{s+2}_{q,1}(\HS)^N) \cap W^1_1((0, T), B^s_{q,1}(\HS)^N)
\end{aligned}\end{equation}
provided that $\|\tilde\rho_0-\tilde\eta_0\|_{B^{s+1}_{q,1}(\HS)} 
\leq \sigma$
and $\bv_0$
satisfies the compatibility condition: $\bv_0|_{\pd\HS}=0$.
\end{thm}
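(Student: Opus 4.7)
The plan is to reduce the quasilinear system \eqref{ns:1} to a fixed-point problem for a semilinear perturbation of a constant-coefficient linear system, and to solve the latter by appealing to the $L_1$-in-time maximal regularity of the compressible Stokes operator on $\HS$ with non-slip boundary condition, whose generation of an analytic semigroup is presumably established in earlier sections. The entire argument is carried out in Lagrange coordinates, so that the transport term $\bv\cdot\nabla\rho$ in the continuity equation disappears.

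Concretely, I introduce the Lagrange map $X(\xi,t)=\xi+\int_0^t\bu(\xi,\tau)\,d\tau$ with $\bu(\xi,t)=\bv(X(\xi,t),t)$, and set $\eta(\xi,t)=\rho(X(\xi,t),t)$. The non-slip condition on $\pd\HS$ together with the embedding $B^{s+2}_{q,1}(\HS)\hookrightarrow W^1_\infty(\HS)$, valid in the regime \eqref{assump:s}, guarantees that $X(\cdot,t)$ is a diffeomorphism of $\HS$ fixing $\pd\HS$ and that $A(\xi,t)=(\nabla_\xi X)^{-T}$ satisfies $\|A-I\|_{L_\infty}\ll 1$ for $T$ small. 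Writing $\eta=\rho_*+\theta$ and peeling off the principal part at $(\rho_*,0)$, the system \eqref{ns:1} takes the form
\begin{equation*}
\left\{
\begin{aligned}
\theta_t + \rho_*\dv\bu &= F_1(\theta,\bu), \\
\rho_*\bu_t - \alpha\Delta\bu - \beta\nabla\dv\bu + P'(\rho_*)\nabla\theta &= F_2(\theta,\bu), \\
\bu|_{\pd\HS}=0,\qquad (\theta,\bu)|_{t=0}&=(\tilde\rho_0,\bv_0),
\end{aligned}
\right.
\end{equation*}
with $F_1,F_2$ collecting all terms coming from the discrepancy between Lagrangian and Eulerian differential operators and from the nonlinearity of $P$; structurally they are at least quadratic in $\theta$, $\bu$, and $\int_0^t\nabla\bu\,d\tau$.

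I then define a nonlinear map $\Phi:(\theta,\bu)\mapsto(\Theta,\bU)$, where $(\Theta,\bU)$ is the solution of the above linear system with right-hand sides frozen at the fed-in pair. The $L_1$-maximal regularity in the space $\CX_T$ of pairs having the regularity \eqref{main.reg} yields a bound of the form
\begin{equation*}
\|(\Theta,\bU)\|_{\CX_T}\leq C_0\bigl(\|\tilde\rho_0\|_{B^{s+1}_{q,1}(\HS)}+\|\bv_0\|_{B^s_{q,1}(\HS)}\bigr)+C_0\,\|(F_1,F_2)\|_{\CY_T},
\end{equation*}
for an appropriate $L_1$-in-time Besov norm $\CY_T$ adapted to each equation. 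Closing the argument on a ball $\{\|(\theta,\bu)\|_{\CX_T}\leq M\}$ with $M$ of the order of the data reduces to proving $\|(F_1,F_2)\|_{\CY_T}\leq\omega(T,M)\,M^2$ with $\omega(T,M)\to 0$ as $T\to 0$, together with an analogous Lipschitz bound for differences. The smallness condition $\|\tilde\rho_0-\tilde\eta_0\|_{B^{s+1}_{q,1}(\HS)}\leq\sigma$ is needed only to keep the initial density in a neighborhood on which $P'$ and $1/\rho$ are bounded above and below by order-one constants, as provided by \eqref{assump:0}.

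The main obstacle is the low-regularity product calculus in $B^s_{q,1}(\HS)$ across the whole range permitted by \eqref{assump:s}. Since $s$ may be negative (down to $-1+N/q$, or $-N/q$ for $q\geq 2N$), classical pointwise multiplication is unavailable, and the estimates of $F_1$, $F_2$ must proceed through paradifferential decomposition, sharp bilinear Besov estimates on the half-space (typically via a reflection/extension compatible with the non-slip boundary), and Moser-type composition estimates for $P(\rho_*+\theta)-P(\rho_*)-P'(\rho_*)\theta$ and for the power series expansion of $A-I$ in $\int_0^t\nabla\bu\,d\tau$. The critical index $N/q$ singled out in the theorem, through $\|\nabla\bu\|_{L_\infty}\leq C\|\nabla\bu\|_{B^{N/q}_{q,1}}$, is precisely what renders the Jacobian-dependent quasilinear coefficients tractable. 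A subtler structural point is that $\theta$ gains a spatial derivative, $\theta\in L_1(B^{s+1}_{q,1})$, only via the coupling with the momentum equation through $P'(\rho_*)\nabla\theta$: the continuity equation alone is purely hyperbolic, so this regularity is a genuinely coupled output of the analytic semigroup, and ensuring that the $F_i$ produced during the iteration are compatible with the corresponding mapping properties is where the bulk of the nonlinear work lies.
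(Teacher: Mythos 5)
Your overall strategy (Lagrange coordinates, reduction to a linear system with maximal $L_1$-regularity in Besov spaces, Banach fixed point) is the same as the paper's, and the product/composition estimates you flag (Lemmas~\ref{lem:APH} and~\ref{lem:Hasp} in the paper, i.e.\ the Abidi--Paicu and Haspot-type estimates with the critical index $N/q$) are indeed the technical engine. However, there is one decisive gap in the reduction step that would prevent the fixed-point argument from closing.

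You freeze the linear part at the \emph{constant} state $(\rho_*,0)$, so the operator is
$\theta_t+\rho_*\dv\bu$, $\rho_*\bu_t-\alpha\Delta\bu-\beta\nabla\dv\bu+P'(\rho_*)\nabla\theta$,
and you assert that the source terms $F_1,F_2$ are ``at least quadratic in $\theta$, $\bu$, $\int_0^t\nabla\bu$''. That is false under the hypotheses of Theorem~\ref{thm:1}: only the perturbation of the initial density from the \emph{reference profile} $\eta_0$ is assumed small ($\|\tilde\rho_0-\tilde\eta_0\|_{B^{s+1}_{q,1}}\leq\sigma$), not the profile $\tilde\eta_0=\eta_0-\gamma_*$ itself, which may have $B^{s+1}_{q,1}$-norm of size $O(1)$. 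Thus, when you write $\eta=\rho_*+\theta$ and move $(\eta-\rho_*)\bu_t$, $(P'(\eta)-P'(\rho_*))\nabla\theta$, etc.\ to the right-hand side, the leading terms are \emph{linear} in $(\theta,\bu)$ with a coefficient $\tilde\eta_0$ (or $\tilde\rho_0$) whose size you cannot make small by shrinking $T$ or $M$: for instance $\|\tilde\eta_0\,\bu_t\|_{L_1((0,T),B^s_{q,1})}\leq C\|\tilde\eta_0\|_{B^{N/q}_{q,1}}\|\bu_t\|_{L_1((0,T),B^s_{q,1})}$ is of the same order as the quantity you are trying to bound. The contraction therefore cannot close on the natural ball.

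The paper circumvents this by proving maximal $L_1$-regularity for the \emph{variable-coefficient} linearized system with density $\eta_0^\epsilon$ (a mollification of $\eta_0$), see Sections~\ref{sec.2}--3. The variable-coefficient resolvent estimate (Theorems~\ref{thm:4.0}, \ref{thm:4}, \ref{thm:3}) is established by a localization argument: a partition of unity is used to reduce, on each patch, to constant coefficients plus a perturbation $\psi_{x_0}(\eta_0(\cdot)-\eta_0(x_0))$ which \emph{is} small in $B^{N/q}_{q,1}$ uniformly in $x_0$ (Lemma~\ref{Prop:B.1} of Danchin--Tolksdorf), with an analogous ``far-field'' estimate (Lemma~\ref{Prop.B2}). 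The fixed-point argument in Section 4 is then set up around $\eta_0^\epsilon$, so the terms that would be unmanageably linear in your scheme — e.g.\ $(\eta_0^\epsilon-\theta_0)\pd_t\bu$ — have coefficients that \emph{can} be made small via $\epsilon$ and the hypothesis $\|\theta_0-\eta_0\|_{B^{s+1}_{q,1}}\leq\sigma$ (cf.\ \eqref{smalldist:1}). If you wish to salvage your constant-coefficient route, you would need to add the standing assumption that $\tilde\eta_0$ itself is small in $B^{s+1}_{q,1}$, which is strictly weaker than the stated theorem and in particular would not cover an arbitrary positive bounded density profile.
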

\begin{remark}\label{rem:1}
If $q \geq 2N$, then we see that $-1+ N/q \leq -N/q$.  Thus, $-1+N/q 
\leq s$. And, what $N-1 < q$ is necessary to have the situation: $-1+ N/q < 1/q$. 
\end{remark}
R. Danchin and R. Tolksdorf \cite{DT22} proved the local and global well-posedness 
of equations \eqref{ns:1} in the $L_1$ in time and $B^{N/q}_{q,1} \times B^{N/q -1}_{q,1}$ in space 
maximal regularity framework for some $q \in (2, min(4, 2N/(N-2))$, 
and the main assumption is that the fluid domain is bounded.  
To obtain  the $L_1$ in time  maximal regularity of solutions to the linearized equations,
so called Stokes equations in the compressible fluid flow case, in \cite{DT22} 
they used their extended version of 
Da Prato and Grisvard theory \cite{DG}, which was a first result concerning $L_1$ maximal regularity
for  continuous analytic semigroups.  In \cite{DT22} , they assumed that the fluid domain is bounded,
which  seems to be necessary to obtain the linear theory for Lam\'e equations cf. \cite[Sect. 3]{DT22}
in their argument  \par
The final goal of our study is 
to solve equations \eqref{ns:1}  if the fluid domain is a general $C^2$ class domain. 
If the fluid domain is the whole space, a number of results have been estabilished 
\cite{CD10, D08, AP07, H11} and references  given therein.   
Thus, our interest is in the initial boundary value problem case.
As a first step of our study, in this paper we consider equations in the half-space, 
namely the model problem for the initial boundary value problem. 
To explain our approach,  let $A$ be Stokes operator, $\bI$ the identity operator, 
and $\Sigma_\mu$ a parabolic sector defined by 
\begin{equation}\label{resol:0}
\Sigma_\mu = \{\lambda \in \BC\setminus\{0\} \mid |\arg\lambda| \leq \pi-\mu\}.
\end{equation}
for $\mu \in (0, \pi/2)$. Let $X = B^{s+1}_{q,1}\times B^s_{q,1}$ be a underlying space of 
the operator $A$, and we may prove that there exists a large number $\gamma > 0$ such that 
the resolvent $(\lambda\bI + A)^{-1}$ exists as a surjective operator on $X$
for $\lambda \in \Sigma_\mu + \gamma$. 
Notice that $B^{s+1}_{q,1}$ is the underlying space of the mass density and 
$B^s_{q,1}$ the underlying space of the velocity field. 
 Thus, $A$ generates a $C_0$ analytic semigroup $\{e^{-At}\}_{t\geq 0}$.
  But, to prove that this is $L_{1, {\rm loc}}$ in tim, we prove that $(\lambda\bI + A)^{-1}$ is 
divided as $(\lambda\bI + A)^{-1} = \CA_1(\lambda) + \CA_2(\lambda)$, where $\CA_i(\lambda)$
($i=1,2$) satisfy the following estimates:
\begin{align}&\label{spectral:0}\left\{\begin{aligned}
\|\lambda\CA_1(\lambda)\|_{\CL(X, X_\sigma)} &\leq C|\lambda|^{-\sigma/2}, \\
\|\lambda\pd_\lambda\CA_1(\lambda)\|_{\CL(X, X_{-\sigma})} & \leq C|\lambda|^{-(1-\sigma/2)};
\end{aligned}\right. \\
&\label{spectral:1}\left\{\begin{aligned}
\|\lambda\CA_2(\lambda)\|_{\CL(X, X)} &\leq C|\lambda|^{-\sigma/2}, \\
\|\lambda\pd_\lambda\CA_2(\lambda)\|_{\CL(X, X)} & \leq C|\lambda|^{-(1-\sigma/2)}
\end{aligned}\right. \end{align}
for every $\lambda \in \Sigma_\mu + \gamma$.
Here,  $\CL(E, F)$ denotes the set of all bounded linear operators from $E$ into $F$, 
 $X_{\pm\sigma} = B^{s+1\pm\sigma}_{q,1}\times B^{s\pm\sigma}_{q,1}$, and 
 $\sigma$ is a very small positive number such that $ -1/q < s-\sigma < s < s+\sigma < 1/q$.
As is known in theory of continuous analytic semigroup \cite{YK}, $e^{-At}$ can be represented as
$$e^{-At}F = \frac{1}{2\pi i}\int_{\Gamma+\gamma} (\lambda\bI + A)^{-1}F\,\d\lambda$$
where $\Gamma = \Gamma_+ \cup \Gamma_-$ and $\Gamma_\pm = \{\lambda \in \BC \mid
\lambda = re^{\pm i(\pi-\mu)}, r \in [0, \infty)\}$. 
Let $\CE_1(t)$ and $\CE_2(t)$ be semigroups generated defined by
$$\CE_i(t)F = \frac{1}{2\pi i}\int_{\Gamma+\gamma} \CA_i(\lambda)F\,\d\lambda.$$
Obvisously,   $e^{-At} = \CE_1(t) + \CE_2(t)$.  Moreover, using \eqref{spectral:0}, we see that 
\begin{align*}
\|\pd_t \CE_1(t)F\|_X &\leq Ce^{\gamma t}t^{-1+\frac{\sigma}{2}}\|F\|_{X_\sigma}, \\
\|\pd_t \CE_1(t)F\|_X &\leq Ce^{\gamma t}t^{-1-\frac{\sigma}{2}}\|F\|_{X_{-\sigma}}.
\end{align*}
Thus, using real interpolation method, we have
$$\int^\infty_0 e^{-\gamma t}\|\pd_t \CE_1(t)F\|_X\,dt \leq C\|F\|_{(X_\sigma, X_{-\sigma})_{1/2, 1}}.
$$
And also, using  \eqref{spectral:1}, we have
\begin{align*}
\|\pd_t \CE_2(t)F\|_X &\leq Ce^{\gamma t}t^{-1+\frac{\sigma}{2}}\|F\|_X,  \\
\|\pd_t \CE_2(t)F\|_X &\leq Ce^{\gamma t}t^{-1-\frac{\sigma}{2}}\|F\|_X.
\end{align*}
Thus, using real interpolation method, we have
$$\int^\infty_0 e^{-\gamma t}\|\pd_t \CE_2(t)F\|_X\,dt \leq C\|F\|_{(X, X)_{1/2, 1}}
= C\|F\|_X.
$$
Since $(X_\sigma, X_{-\sigma})_{1/2, 1} = X$ and $e^{-At}F = \CE_1(t)F + \CE_2(t)F$, we have 
$$\int^\infty_0 e^{-\gamma t}\|\pd_te^{-At}F\|_X\,\d t \leq C\|F\|_X,$$
which is our $L_1$ maximal regularity. \par
After reformulating equations \eqref{ns:1} by using
 Lagrange transformation to eliminate the convection term $\bv\cdot\nabla\rho$, 
we apply the Banach fixed point theorem to the resultant nonlinear problem based on
our $L_1$-$X$ maximal regularity to prove the local well-posedness in  Lagrange coordinates.
This is a rough idea of our proof of Theorem \ref{thm:1}.

\subsection{Problem Reformulation}\label{sec.1.1}
To prove Theorem \ref{thm:1}, it is advantageous to transfer 
equations \eqref{ns:1} to equations in  Lagrange coordinates.
In fact, 
the convection term $\bv\cdot\nabla\rho$ in the material derivative disappears
in the equations of Lagrange coordinates. \par
Let $\bu(x, t)$ be the velocity field in  Lagrange coordinates: 
$x=(x_1, \ldots, x_N)$ and we consider  Lagrange transformation:
$$y = X_\bu(x, t) := x + \int^t_0 \bu(x, \tau)\,\d\tau,
$$
where equations \eqref{ns:1} are written in Euler coordinates: $y=(y_1, \ldots, 
y_N)$. We assume that 
\begin{equation}\label{assump:2}
\Bigl\|\int^T_0 \nabla \bu(\cdot, \tau)\,\d\tau\Bigr\|_{L_\infty(\HS)}
\leq c_0
\end{equation}
with some small constant $c_0>0$, and then for each $t \in (0, T)$, 
the map: $X_\bu(x, t)  = y$ is a 
$C^1$ diffeomorphism from $\HS$ onto $\Phi(\HS)$ under the assumption that 
$\bu \in L_1((0, T), B^{s+2}_{q,1}(\HS)^N)$
with $-1 + N/q \leq s < 1/q$ (cf. Danchin et al \cite{DHMT}).  Moreover,
using an argument due to Str\"ohmer \cite{Str:89}, we have $\Phi(\HS) = \HS$, 
and so as a conclusion, $\Phi(\HS)$ is a $C^1$ diffeomorphism from 
$\HS$ onto $\HS$.\par 
We shall drive equations in Lagrange coordinates. Let 
$\BA_\bu$ is the Jacobi matrix of transformation: $y = X_\bu$, that is
$$\BA_\bu = \frac{\pd x}{\pd y} =(\frac{\pd y}{\pd x})^{-1} 
=\Bigl(\BI + \int^t_0\nabla \bu(x, \tau)\,\d\tau\Bigr)^{-1}
= \sum_{j=0}^\infty \Bigl(\int^t_0\nabla \bu(x, \tau)\,\d\tau\Bigr)^j, $$
which is well-defined under the smallness assumption \eqref{assump:2}, 
where $\BI$ denotes the $N\times N$ identity matrix. 
We have the following 
well-known formulas:
\begin{equation}\label{trans:1}\begin{aligned}
\nabla_y & = \BA_{\bu}^\top \nabla_x, \quad
\dv_y (\,\cdot\,)  = \BA_{\bu}^\top \colon \nabla_x (\,\cdot\,)
= \dv_x(\BA_{\bu} (\,\cdot \, )), \\
\nabla_y \dv_y (\, \cdot \,) & = 
\BA_{\bu}^\top \nabla_x ((\BA_{\bu}^\top - \BI) \colon \nabla_x (\, \cdot \, ))
+ \BA_{\bu}^\top \nabla_x \dv_x (\, \cdot \,), \\
\Delta_y (\,\cdot\,) &= \dv_y\nabla_y(\, \cdot \,) 
= \dv_x(\BA_{\bu} \BA_{\bu}^\top\nabla_x (\, \cdot \,))
= \dv_x((\BA_{\bu} \BA_{\bu}^\top-\BI)\nabla_x (\cdot)) + \Delta_x (\, \cdot \,).
\end{aligned}\end{equation}
 Transformation law \eqref{trans:1}
transforms the system of equations \eqref{ns:1} into the following system of equations: 
\begin{equation}\label{ns:2}\left\{\begin{aligned}
\pd_t\rho+\rho\dv\bu = F(\rho, \bu)& 
&\quad&\text{in $\HS\times(0, T)$}, \\
\rho\pd_t\bu - \alpha\Delta \bu  -\beta\nabla\dv\bu 
+  \nabla P(\rho)
 = 
 \bG(\rho, \bu)& &\quad&\text{in $\HS\times(0, T)$}, \\
\bu|_{\pd\HS} =0, \quad (\rho, \bu)|_{t=0} = (\rho_0, \bu_0)
& &\quad&\text{in $\HS$}.
\end{aligned}\right.\end{equation}
Here, we have set
\begin{equation}\label{term:1}\begin{aligned}
F(\rho, \bu) & =  \rho((\BI-\BA_\bu):\nabla\bu) 
\\
\bG(\rho, \bu)& = 
(\BI-(\BA_\bu^\top)^{-1})(\rho\pd_t\bu -\alpha\Delta\bu)  
 +\alpha(\BA_\bu^\top)^{-1}\dv((\BA_\bu\BA_\bu^\top-\BI):\nabla\bu)\\
&+\beta\nabla((\BA_\bu^\top-\BI):\nabla\bu).
\end{aligned}\end{equation}

For equations \eqref{ns:2}, we shall prove the following theorem.
\begin{thm}\label{thm:2} Let $1 < q < \infty$ and $-1+1/q \leq s < 1/q$. Assume that 
 $s$ satisfies \eqref{assump:s}. 
Let $\eta_0 = \rho_* + \tilde\eta_0$ be a given initial data such that 
$\tilde\eta_0 \in B^{s+1}_{q,1}(\HS)$ and for some
 positive constants $\rho_1$ and $\rho_2$, the assumption \eqref{assump:0}
holds. 
Then, there exist constants $\delta>0$ and $T>0$ such that 
for any initial data $\rho_0 \in B^{s+1}_{q,1}(\HS)$
and $\bu_0 \in B^s_{q,1}(\HS)^N$ satisfying the compatibility condition:
$\bu_0|_{\pd\HS} =0$, and  $\|\rho_0 - \eta_0\|_{B^{s+1}_{q,1}(\HS)}
\leq \sigma$, problem \eqref{ns:2} admits 
 unique solutions $\rho$ and $\bu$ satisfying
the regularity conditions:
$$\rho-\rho_0 \in W^1_1((0, T), B^{s+1}_{q,1}(\HS)), 
\quad 
\bu \in L_1((0, T), B^{s+2}_{q,1}(\HS)^N) \cap 
W^1_1((0, T), B^s_{q,1}(\HS)^N). 
$$
\end{thm}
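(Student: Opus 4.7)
The plan is to prove Theorem \ref{thm:2} by a Banach fixed-point argument built on the $L_1$--$X$ maximal regularity of the linearised Stokes-type operator sketched in the introduction, combined with product, composition and multiplier estimates in Besov spaces. Setting $\tilde\rho=\rho-\eta_0$ and freezing the coefficients of \eqref{ns:2} at the profile $\eta_0$ turns the principal part into the linear system
\begin{equation*}
\pd_t\tilde\rho + \eta_0\,\dv\bu = f,\qquad
\eta_0\,\pd_t\bu - \alpha\Delta\bu - \beta\nabla\dv\bu + P'(\eta_0)\,\nabla\tilde\rho = \vg,\qquad
\bu|_{\pd\HS}=0,
\end{equation*}
where $f$ and $\vg$ absorb $F(\rho,\bu)$, $\bG(\rho,\bu)$, the quasi-linear remainders $-\tilde\rho\,\dv\bu$ and $-\tilde\rho\,\pd_t\bu$, and the pressure remainder $-(P'(\rho)-P'(\eta_0))\nabla\tilde\rho-P'(\rho)\nabla\tilde\eta_0$. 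Because $\eta_0$ is bounded and bounded below by \eqref{assump:0} and $\tilde\eta_0\in B^{s+1}_{q,1}(\HS)$ is a multiplier on the relevant Besov spaces under \eqref{assump:s}, the operator matrix $A$ built from this left-hand side fits the framework sketched in the introduction; its resolvent admits the splitting $(\lambda\bI+A)^{-1}=\CA_1(\lambda)+\CA_2(\lambda)$ satisfying \eqref{spectral:0}--\eqref{spectral:1}, and the contour/interpolation argument there produces the $L_1$ maximal regularity estimate
\begin{multline*}
\|\pd_t\tilde\rho\|_{L_1((0,T),B^{s+1}_{q,1})}+\|\bu\|_{W^1_1((0,T),B^s_{q,1})}+\|\bu\|_{L_1((0,T),B^{s+2}_{q,1})}\\
\le C\bigl(\|\tilde\rho_0\|_{B^{s+1}_{q,1}}+\|\bu_0\|_{B^s_{q,1}}+\|f\|_{L_1((0,T),B^{s+1}_{q,1})}+\|\vg\|_{L_1((0,T),B^s_{q,1})}\bigr)
\end{multline*}
on any bounded time interval $(0,T)$.

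Next I would set up the fixed-point iteration in the space
\begin{equation*}
\CX_T=\bigl\{(\tilde\rho,\bu):\tilde\rho\in W^1_1((0,T),B^{s+1}_{q,1}(\HS)),\ \bu\in L_1((0,T),B^{s+2}_{q,1}(\HS)^N)\cap W^1_1((0,T),B^s_{q,1}(\HS)^N)\bigr\}
\end{equation*}
by choosing a closed ball $B_R$ about the linear solution $(\tilde\rho^\ell,\bu^\ell)$ of the system with zero sources and initial data $(\tilde\rho_0,\bu_0)$, and by defining $\Phi$ to send any iterate in $B_R$ to the solution of the linear problem whose sources $f$ and $\vg$ are evaluated at that iterate. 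Both self-mapping and contraction then reduce to bounding $f$ and $\vg$ in $L_1((0,T),B^{s+1}_{q,1})\times L_1((0,T),B^s_{q,1})$ by a small factor times the $\CX_T$-norm. The embedding $\|\nabla\bu\|_{L_\infty}\le C\|\nabla\bu\|_{B^{N/q}_{q,1}}\le C\|\bu\|_{B^{s+2}_{q,1}}$, valid exactly under \eqref{assump:s}, yields \eqref{assump:2} for small $T$ and makes the series $\BA_\bu=\sum_{j\ge 0}\bigl(\int_0^t\nabla\bu\,\d\tau\bigr)^j$ converge with $\BA_\bu-\BI$ small; the multiplier property of $B^{s+1}_{q,1}(\HS)$ on $B^s_{q,1}(\HS)$, $B^{s+1}_{q,1}(\HS)$ and $B^{s+2}_{q,1}(\HS)$, again available under \eqref{assump:s}, then controls the products $\rho\,\dv\bu$, $(\BA_\bu-\BI)\colon\nabla\bu$ and the composition $P'(\rho)$. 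Each such product picks up either a $T$-small factor from the time integral inside $\BA_\bu$ or an $R$-small factor from the deviation of the iterate from $(\tilde\rho^\ell,\bu^\ell)$, which is what the contraction demands.

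The main obstacle is propagating smallness of $\rho-\eta_0$ and of $\BA_\bu-\BI$ through the nonlinear map while keeping $f,\vg$ in $L_1$ in time. The bound $\|\tilde\rho-\tilde\eta_0\|_{L_\infty((0,T),B^{s+1}_{q,1})}\le\|\tilde\rho_0-\tilde\eta_0\|_{B^{s+1}_{q,1}}+\int_0^T\|\pd_t\tilde\rho\|_{B^{s+1}_{q,1}}\,\d t$ dominates this quantity by $\sigma$ plus the $\CX_T$-norm, fixing the order of parameter choice (first pick $R$, then $\sigma$ and $T$ small). The most delicate term is $\tilde\rho\,\pd_t\bu$ on the right-hand side of the momentum equation: since $\pd_t\bu$ lies only in $L_1((0,T),B^s_{q,1})$ while $\tilde\rho$ lies in $L_\infty((0,T),B^{s+1}_{q,1})$, controlling this product in $L_1((0,T),B^s_{q,1})$ demands exactly the multiplier property of $B^{s+1}_{q,1}$ on $B^s_{q,1}$, which is precisely where the two-sided restriction \eqref{assump:s} becomes sharp for the method to close.
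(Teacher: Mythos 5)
Your overall plan coincides structurally with the paper's: linearize around a frozen coefficient, invoke the $L_1$--$B^{s+1}_{q,1}\times B^{s}_{q,1}$ maximal regularity of that linear operator, and run a Banach fixed point in an $L_1$-in-time space using the Besov product/composition lemmas. The decomposition into a ``linear profile'' solution plus a deviation ball is also what the paper does with $(\eta_\ba,\bw_\ba)$.

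There is, however, a genuine gap: you freeze the coefficient at $\eta_0$ itself and then assert that the resulting operator ``fits the framework sketched in the introduction.'' This step fails. The $L_1$ maximal regularity argument via the split resolvent \eqref{spectral:0}--\eqref{spectral:1} requires resolvent estimates at the three regularity levels $\nu=s$ and $\nu=s\pm\sigma$ (Theorem \ref{thm:3} (2)). The proof of those estimates (Lemma \ref{lem:15}) passes through the substitution $\rho=\lambda^{-1}(f-\eta_0\dv\bv)$ and produces the coupling term $\lambda^{-1}(\cdots)(\nabla\eta_0)\dv\bv$ (or the analogous term $\lambda^{-1}P'(\eta_0)(\nabla\eta_0)\dv\bu$ in your lower-order variant), which must be absorbed as a small perturbation in $B^{\nu}_{q,1}$. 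But $\tilde\eta_0\in B^{s+1}_{q,1}$ gives only $\nabla\tilde\eta_0\in B^{s}_{q,1}$, and $s<1/q\leq N/q$ means $\nabla\tilde\eta_0$ is \emph{not} in the multiplier class $B^{N/q}_{q,1}$; nor is it in $B^{s+\sigma}_{q,1}$. So neither form of the product lemma (Lemma \ref{lem:APH}) closes the estimate at $\nu=s+\sigma$, and the analytic semigroup and its $L_1$ regularity are never established for your frozen-at-$\eta_0$ operator. The paper circumvents this by mollifying $\tilde\eta_0$ to $\tilde\eta^\epsilon_0$ with $\nabla\tilde\eta^\epsilon_0\in B^{N/q}_{q,1}$ (Section~\ref{sec.2}, \eqref{appro:1.1}--\eqref{appro:1.2}), proving all resolvent estimates with a $\gamma=\gamma(\epsilon)$ that blows up as $\epsilon\to 0$ while the prefactor $C$ stays $\epsilon$-uniform, and then running a \emph{two-parameter} smallness argument in the fixed-point step: first choose $\epsilon$, $\sigma$, $\omega$ small (fixing $\gamma$), then choose $T$ small so that $\gamma T\leq 1$ and $e^{\gamma T}$ stays bounded. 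Without this regularization and the accompanying order of limits, your contraction estimate $E_T(\rho,\bv)\leq Ce^{\gamma T}(\omega^2+\omega^3)$ has no meaning, because the linear estimate it rests on was never proved.

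A secondary (and fixable) imprecision: after fixing $\epsilon$, the frozen coefficient is $\eta^\epsilon_0$, not $\eta_0$, so the ``small remainder'' in the momentum equation is $(\eta^\epsilon_0-\rho)\pd_t\bu$ rather than $(\eta_0-\rho)\pd_t\bu$; the smallness you want then decomposes as $(\eta^\epsilon_0-\eta_0)+(\eta_0-\theta_0)-\theta$, and each of the three pieces must be made small by a separate choice ($\epsilon$, the hypothesis $\|\rho_0-\eta_0\|_{B^{s+1}_{q,1}}\leq\sigma$, and the ball radius $\omega$). Your sketch conflates these into a single smallness parameter.
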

\subsection{Short History} The mathematical study of compressible viscous fluids has a long history since
1950's. In fact, the first result was a uniquness theorem prove by Graffi
\cite{Grafi} and Serrin \cite{Serrin}. A local in time existence theorem was proved by Nash \cite{Nash62},
Itaya \cite{Itaya71} and Vol'pert and Hudjaev \cite{VH} in $\BR^3$ in the H\"older continuous function space.
After these works by pioneers,  much study has been done with the development of modern 
mathematics. 
We do not aim to give an extensive list of references, but refer to the following references and references 
given therein only for  unique existence theorems of strong solutions. \par 
A local in time unique existence thoerem
was proved by Solonnikov \cite{Sol80} in $W^{2,1}_q$ with $N < q < \infty$, by Tani \cite{Tani77} in the 
H\"older spaces, by Str\"ohmer \cite{Str:89} with analytic semigroup approach
and by Enomoto and Shibata \cite{ES}   in the $L_p$-$L_q$ maximal regularity class, where $\CR$ boundedness
of solution operators have been used.  If the fluid domain is $\BR^N$, 
the local well-posedness was proved by Charve and Danchin \cite{CD10} 
in the $L_1$ in time framework. 
\par
A global well-posedness was proved by
Matsumura and Nishida \cite{MN80, MN83} by energy methods and refer
to the survey paper by Shibata and Enomoto \cite{SE18} for several extensions of Matsumura and 
Nishida's work and the optimal decay properties of solutions  in the whole space and exterior domains. 
The global well-posedness in the $L_1$ in time framework was proved by Danchin \cite{D08} and also 
see Charve and Danchin \cite{CD10}, Abidi and Paicu \cite{AP07} and Haspot \cite{H11}. 
The global well-posedness in the $L_q$ maximal regularity framework ($1 < q < \infty$) 
 was proved by
Mucha and Zajaczkowski \cite{MZ}  and 
 in the $L_p$ in time and $L_q$ in space 
maximal regularity framework ($1 < p, q < \infty$) by Shibata \cite{S22}.
 Kagei and Kobayashi \cite{KK02, KK05} proved the global well-posedness
with optimal decay rate in the half-space and by Kagei \cite{Kagei08} in the layer domain.
Periodic solutions were treated by Valli \cite{Vali83}, Tsuda \cite{Tsuda16} and 
references given  therein. 
\subsection{Notation} The symbols $\BN$, $\BR$ and $\BC$ denote the set of all natural numbers, real numbers and 
complex numbers. Set $\BN_0 = \BN \cup\{0\}$. Let 
$L_q(\Omega)$, $W^m_q(\Omega)$ and $B^s_{q,r}(\Omega)$ 
denote the standard 
Lebesgue space, Sobolev space, and Besov space definded on a domain $\Omega$ in $N$ dimensional Euclidean space
$\BR^N$, while $\|\cdot\|_{L_q(\Omega)}$, $\|\cdot\|_{W^m_q(\Omega)}$, and $\|\cdot\|_{B^s_{q,r}(\Omega)}$ denote
their norms.  For time interval $I$, $L_q(I, X)$ and $W^1_q(I, X)$ denote respective $X$-valued Lebsgue space and 
Sobolev space of order 1.  $W^\alpha_q(I, X) = (L_q(I, X), W^1_q(I, X))_{\alpha, q}$, where $(\cdot, \cdot)_{\theta, r}$
denote  real interpolation functors for $\theta \in (0, 1)$ and $1 \leq r \leq\infty$. 
For $1\leq q < \infty$, we write
$$
\|f\|_{L_q(I, X)} = \Bigl(\int_I\|f(t)\|_X^q\,\d t\Bigr)^{1/q}, \quad \|e^{-\gamma t}f\|_{L_q(I, X)}
= \Bigl(\int_I (e^{-\gamma t}\|f(t)\|_X)^q\,\d t\Bigr)^{1/q}.
$$
Let 
$BC^0(I, X)$ denote the set of all  $X$-valued bounded continuous 
functions defined on $I$.  For any integer $m \geq 1$, $BC^m(I, X)$ denotes the set of all $X$-valued 
bounded continuous functions whose derivatives exist and bounded in $I$  up to order $m$. Set 
$$\|f\|_{BC(I, X)} = \sup_{t \in I}\|f(t)\|_X, \quad \|f\|_{BC^m(I,X)} = \|f\|_{BC^0(I, X)}
+ \sum_{j=1}^m \sup_{t\in I}\|(D_t^jf)(t)\|_X.$$

For the differentiation, $D^\alpha f :=\pd_x^\alpha f = \pd^{|\alpha|}f/{\pd x_1^{\alpha_1} \cdots \pd x_N^{\alpha_N}}
$ for multi-index $\alpha=(\alpha_1, \ldots, \alpha_N)$ with  $|\alpha| = \alpha_1+\cdots + \alpha_N$. 
For the notational simplicity, we write $\nabla f = \{ \pd_x^\alpha f \mid|\alpha|=1\}$, $\nabla^2 f
= \{\pd_x^\alpha f\mid |\alpha|=2\}$, $\bar\nabla f=(f, \nabla f)$, $\bar\nabla^2 f = 
(f, \nabla f, \nabla^2 f)$. For a Banach space $X$,  $\CL(X)$ denotes
the set of all bounded linear operators from $X$ into itself and 
$\|\cdot\|_{\CL(X)}$ denotes its norm.    
Let $\bI$ denote the identity operator
and $\BI$ the $N\times N$ identity matrix.  For $\mu \in (0, \pi/2)$, 
$$\Sigma_\mu = \{\lambda \in \BC\setminus\{0\} \mid |\arg\lambda| \leq \pi-\mu\}.$$
For any Banach space $X$ with norm $\|\cdot\|_X$, $X^N = \{\bff = (f_1, \ldots, f_N) \mid f_i \in X \enskip(i=1, \ldots, N)\}$
and $\|\bff\|_X = \sum_{i=1}^N \|f_i\|_X$. For a vector $\bv$ and a matrix $\BA$, $\bv^\top$ and $\BA^\top$ denote
respective the transpose of $\bv$ and the transpose of $\BA$.  \par
The letter $C$ denotes a generic constant and $C_{a,b,\cdots} = C(a, b, \cdots)$ denotes the constant depending on
quantities $a$, $b$, $\cdots$. $C$, $C_{a,b,\cdots}$, and $C(a, b, \cdots)$ may change from line to line. 
\section{Spectral Analysis}\label{sec.2}

Let $\tilde\eta_0 \in B^{s+1}_{q,1}(\HS)$ and set $\eta_0(x) = \gamma_* + \tilde\eta_0(x)$.
Let $\eta^\epsilon_0=\gamma_*+\tilde\eta^\epsilon_0$ 
is a regularization of $\eta_0$ satisfying the following 
conditions:
\begin{equation}
\label{appro:1.1}
\lim_{\epsilon\to0}\|\tilde\eta^\epsilon_0-\tilde\eta_0\|_{B^{s+1}_{q,1}(\HS)} =0. 
\end{equation}
For any $\epsilon>0$, there exists a constant
$C_\epsilon>0$ such that 
\begin{equation}
\label{appro:1.2}
\|\nabla \tilde\eta^\epsilon_0\|_{B^{N/q  }_{q,1}(\HS)} \leq C_\epsilon\|\tilde\eta_0\|_{B^{N/q}_{q,1}(\HS)}.
\end{equation}
If $N/q \leq s$, then $C_\epsilon$ is a constant independent of $\epsilon$, but 
if $N/q > s$, then $C_\epsilon$ is a constant  such that $\lim_{\epsilon\to0} C_\epsilon=\infty$.   
\par   
In fact, let $\kappa \in C^\infty_0(\BR^N)$ such that ${\rm supp}\, \kappa \subset 
\{x \in \BR^N \mid |x| \leq 1\}$ and $\int_{\BR^N} \kappa(x)\,\d x=1$, and set
$\kappa_\epsilon(x) = \epsilon^{-N}\kappa(x/\epsilon)$.   Let $\tilde\zeta_0$ be an 
extension of $\tilde\eta_0$ to $\WS$ such that $\tilde\zeta_0|_{\HS}=\tilde\eta_0$ in $\HS$
and $\|\tilde\zeta_0\|_{B^{s+1}_{q,1}(\WS)} \leq C\|\tilde\eta_0\|_{B^{s+1}_{q,1}(\HS)}$.
And then, we define 
\begin{equation}\label{conv:1}
\tilde\eta^\epsilon_0(x) = \int_{\WS} \kappa_\epsilon(x-y)\tilde\zeta_0(y)\,\d y.
\end{equation}
We see easily that this $\tilde\eta^\epsilon_0$ satisfies \eqref{appro:1.1} and \eqref{appro:1.2}.

 In this section,  we consider a generalized resolvent problem: 
\begin{equation}\label{s:2}\left\{\begin{aligned}
\lambda \rho+\eta_0^\epsilon\dv\bv = f& 
&\quad&\text{in $\HS$}, \\
\eta_0^\epsilon\lambda\bv - \alpha\Delta \bv  -\beta\nabla\dv\bv 
+  \nabla(P'(\eta_0^\epsilon) \rho) = \bg& &\quad&\text{in $\HS$}, \\
\bv|_{\pd\HS} =0.
\end{aligned}\right.\end{equation}

Let $\Sigma_\mu$ be the set defined in \eqref{resol:0} for $\mu \in (0, \pi/2)$. 
 Below, $\mu \in (0, \pi/2)$ is fixed. 
We shall prove the following theorem.
\begin{thm}\label{thm:3}
Let $1 < q < \infty$ and $-1 + 1/q  < s < 1/q$.  Assume that $s$ satisfies 
\eqref{assump:s}. 
Let $\eta_0(x)= \gamma_* + \tilde\eta_0(x)$ and assume that $\tilde\eta_0 \in B^{s+1}_{q,1}(\HS)$. 
Let $\tilde\eta^\epsilon_0$ be a 
regularization of $\tilde\eta_0$ satisfying assumptions \eqref{appro:1.1}
and \eqref{appro:1.2} and set $\eta^\epsilon_0 = \gamma_* + \tilde\eta^\epsilon_0.$
Then, the following three assertions hold. \par
\thetag1 ~ There exist 
constants $\gamma>0$ and $C$ 
such that for any $\lambda \in \Sigma_\mu+ \gamma$,
$f \in B^{s+1}_{q,1}(\HS)$ and $\bg \in B^s_{q,1}(\HS)^N$, 
problem \eqref{s:2} admits unique solutions 
$\rho \in B^{s+1}_{q,1}(\HS)$ and $\bv \in B^{s+2}_{q,1}(\HS)^N$
satsifying the estimate:
\begin{equation}\label{est:1.0}
\|(\lambda, \lambda^{1/2}\bar\nabla, \bar\nabla^2)
\bv\|_{B^{s}_{q,1}(\HS)} +\|\lambda\rho\|_{B^{s+1}_{q,1}(\HS)}
\leq C(\|f\|_{B^{s+1}_{q,1}(\HS)} + \|\bg\|_{B^s_{q,1}(\HS)})
\end{equation}
for every $\lambda \in \Sigma_\mu + \gamma$. 
\par
\thetag2~Let $\sigma>0$ be a small number such that 
$-1+1/q < s-\sigma < s+\sigma < 1/q$. Assume that 
\begin{equation}\label{assump:sigma}\left\{\begin{aligned}
s-\sigma& > 0&\quad&\text{when $s>0$}, \\
1 + \sigma& < \frac{2N}{q} &\quad&\text{when $N-1 < q < 2N$ and $s\leq 0$}, \\
|s| + \sigma &< \frac{N}{q} &\quad&\text{when $q \geq 2N$ and $s\leq 0$}.
\end{aligned}\right.\end{equation}
Then, there exist
constants $\gamma$ and $C$ such that 
for every $\lambda \in \Sigma_\mu + \gamma$ there hold 
\begin{equation}
\|(\lambda, \lambda^{1/2}\bar\nabla, \bar\nabla^2)\bv\|_{B^s_{q,1}(\HS)}
\leq C|\lambda|^{-\frac{\sigma}{2}}(\|f\|_{B^{s+1+\sigma}_{q,1}(\HS)} + 
\|\bg\|_{B^s_{q,1}(\HS)})
\label{fundest.2**} \end{equation}
provided $f \in B^{s+1+\sigma}_{q,1}(\HS)$ and $\bg \in B^{s+\sigma}_{q,1}(\HS)^N$ additionally, as well as 
\begin{equation}
\|(\lambda, \lambda^{1/2}\bar\nabla, \bar\nabla^2)\pd_\lambda\bv\|_{B^s_{q,1}(\HS)}
\leq C|\lambda|^{-(1-\frac{\sigma}{2})}(\|f\|_{B^{s+1-\sigma}_{q,1}(\HS)}
+ \|\bg\|_{B^{s-\sigma}_{q,1}(\HS)}) 
\label{fundest.3**}
\end{equation}
provided $f \in B^{s+1-\sigma}_{q,1}(\HS)$ and $\bg \in B^{s-\sigma}_{q,1}(\HS)^N$ additionally. \par 
\thetag3~ Let $\sigma>0$ be the same small constant as in \thetag2.
Then, there exist
constants $\gamma$ and $C$ such that for every $\lambda \in \Sigma_\mu + \gamma$ we have
\begin{equation}\label{rho:1}\begin{aligned}
\|\rho\|_{B^{s+1}_{q,1}(\HS)} &\leq C|\lambda|^{-\frac{\sigma}{2}}
(\|f\|_{B^{s+1}_{q,1}(\HS)} + \|\bg\|_{B^s_{q,1}(\HS)}),  \\
\|\rho\|_{B^{s+1}_{q,1}(\HS)} &\leq C|\lambda|^{-(1-\frac{\sigma}{2})}
(\|f\|_{B^{s+1}_{q,1}(\HS)} + \|\bg\|_{B^s_{q,1}(\HS)}).
\end{aligned}\end{equation}
In the statement of \thetag1,  \thetag2 and \thetag3, 
the constant $\gamma$ depends on $\gamma_*$, $\|\tilde\eta_0\|_{B^{N/q}_{q,1}}$, and $\|\nabla\tilde\eta^\epsilon_0
\|_{B^{N/q}_{q,1}(\HS)}$, 
and the  $C$  $\gamma_*$ and $\|\tilde\eta_0\|_{B^{N/q}_{q,1}}$.
\end{thm}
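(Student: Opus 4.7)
\medskip

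The plan is to reduce the coupled system \eqref{s:2} to a Lam\'e-type resolvent equation for $\bv$ alone, to analyze the resulting constant-coefficient model problem by partial Fourier transform in the tangential variables, and then to recover the variable-coefficient case by perturbation. Using $\rho = \lambda^{-1}(f - \eta_0^\epsilon\dv\bv)$ from the continuity equation, one rewrites the momentum equation as
\[
\eta_0^\epsilon \lambda\bv - \alpha\Delta\bv - \beta\nabla\dv\bv - \lambda^{-1}\nabla\bigl(P'(\eta_0^\epsilon)\eta_0^\epsilon\dv\bv\bigr) = \bg - \lambda^{-1}\nabla\bigl(P'(\eta_0^\epsilon)f\bigr),
\]
with Dirichlet boundary condition on $\pd\HS$. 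The continuity equation is then used \emph{a posteriori} to recover $\rho$.

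For the constant-coefficient version ($\eta_0^\epsilon\equiv\gamma_*$), I would apply partial Fourier transform in $x'=(x_1,\dots,x_{N-1})$, reducing the problem to a second-order ODE system in $x_N$ parametrized by $(\lambda,\xi')$. The characteristic polynomial has two roots reflecting a parabolic decay scale $\sqrt{\lambda/\alpha}$ and a compressible one $\sqrt{\lambda/(\alpha+\beta)}$, both with positive real part on $\Sigma_\mu+\gamma$; solving the resulting two-point boundary value problem (decay at $x_N\to\infty$ plus the Dirichlet condition at $x_N=0$) yields an explicit Poisson-type representation for $\bv$. The Fourier multipliers so obtained admit $B^s_{q,1}(\HS)$ estimates by Littlewood--Paley decomposition and Bernstein-type inequalities on dyadic blocks, establishing the constant-coefficient version of \eqref{est:1.0} with the precise weights $(\lambda,\lambda^{1/2}\bar\nabla,\bar\nabla^2)$.

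The variable-coefficient case is then handled by writing $\eta_0^\epsilon=\gamma_*+\tilde\eta_0^\epsilon$ and moving the $\tilde\eta_0^\epsilon$-dependent terms (including those generated by $P'(\eta_0^\epsilon)-P'(\gamma_*)$, which one linearizes by Bony paradifferential calculus) to the right-hand side. The Besov product estimate $\|\tilde\eta_0^\epsilon u\|_{B^s_{q,1}(\HS)}\leq C\|\tilde\eta_0^\epsilon\|_{B^{N/q}_{q,1}(\HS)}\|u\|_{B^s_{q,1}(\HS)}$ is valid precisely in the range \eqref{assump:s}, which is exactly what forces that hypothesis and the split between $N-1<q<2N$ and $q\geq 2N$. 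Choosing $\gamma$ large (depending on $\gamma_*$, $\|\tilde\eta_0\|_{B^{N/q}_{q,1}}$, and the amplified constant $C_\epsilon\|\nabla\tilde\eta_0^\epsilon\|_{B^{N/q}_{q,1}}$) makes the perturbative operator a contraction, and a Neumann series then delivers unique solvability together with \eqref{est:1.0}. Assertion (2) follows by applying (1) at the shifted regularity level $s+\sigma$ (admissibility being the content of \eqref{assump:sigma}) and invoking real interpolation between that bound and (1) to extract the $|\lambda|^{-\sigma/2}$ gain; the derivative estimate \eqref{fundest.3**} stems from the resolvent identity $\pd_\lambda(\lambda\bI+A)^{-1}=-(\lambda\bI+A)^{-2}$, which trades one factor of $|\lambda|$ for additional smoothing and is combined with the same interpolation at level $s-\sigma$. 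Assertion (3) follows directly from $\lambda\rho=f-\eta_0^\epsilon\dv\bv$ using (1) and the lower bound $|\lambda|\geq\gamma$.

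The main obstacle I expect is the multiplier analysis in the half-space for the Lam\'e resolvent with two competing decay scales: one must show that the boundary-layer corrections produced by the Dirichlet condition are $B^s_{q,1}$-bounded with the correct $(\lambda,\lambda^{1/2}\bar\nabla,\bar\nabla^2)$ weights, uniformly in $\lambda\in\Sigma_\mu+\gamma$, and the restrictions $-1+N/q\leq s$ and $s<1/q$ should emerge naturally from the paraproduct estimates and from the $L_q$-boundedness of boundary traces respectively. A secondary subtlety is tracking the $\epsilon$-dependence: since $C_\epsilon$ in \eqref{appro:1.2} may blow up as $\epsilon\to0$ when $N/q>s$, $\gamma$ must itself be allowed to depend on $\epsilon$, and this dependence has to be carried carefully through the perturbation step so that the final estimates hold with constants of the specific type stated in the theorem.
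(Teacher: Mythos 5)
The reduction to a Lam\'e-type resolvent problem for $\bv$ alone via $\rho=\lambda^{-1}(f-\eta_0^\epsilon\dv\bv)$ is exactly the paper's strategy, and using a constant-coefficient half-space analysis as the base case (this is Kuo's result, cited in the paper) is also on target. Assertion (3) is indeed essentially immediate from the algebraic formula for $\rho$ once (1) and (2) are in hand. But there is a real gap in your treatment of the variable-coefficient step, and it is not a technicality.

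You propose to absorb the whole zeroth-order coefficient perturbation $\tilde\eta_0^\epsilon(x)\lambda\bv$ into a Neumann series, making the perturbative operator small ``by choosing $\gamma$ large.'' This cannot work: the perturbation acts on the resolvent through $\tilde\eta_0^\epsilon\,\lambda(\lambda\bI+A)^{-1}$, and $\lambda(\lambda\bI+A)^{-1}$ is bounded \emph{uniformly} in $\lambda$, never small. Enlarging $\gamma$ therefore does not shrink this term; its operator norm is of order $\|\tilde\eta_0\|_{B^{N/q}_{q,1}}$, which the theorem does not assume to be small. (Contrast this with the lower-order terms carrying an explicit factor $\lambda^{-1}$, such as $\lambda^{-1}\nabla(P'(\eta_0^\epsilon)\eta_0^\epsilon\dv\bv)$: for those, taking $\gamma$ large genuinely helps, and the paper's Lemma~\ref{lem:15} does exactly that.) The paper handles the large-amplitude variation of $\eta_0$ through a localization argument (Theorem~\ref{thm:4.0}): it freezes the coefficient at boundary points, interior points, and in the far field, uses Lemma~\ref{Prop:B.1} and Lemma~\ref{Prop.B2} to make the \emph{local oscillation} $\|\psi_{x_0}(\eta_0-\eta_0(x_0))\|_{B^{N/q}_{q,1}}$ small by shrinking patches, constructs a parametrix by gluing the local solutions with a partition of unity, and then kills the $O(|\lambda|^{-1/2})$ commutator remainder $\bU(\lambda)$ by taking $\gamma$ large. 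After that, the passage from $\eta_0$ to $\eta_0^\epsilon$ (Theorem~\ref{thm:4}) is legitimately a small perturbation because $\|\tilde\eta_0^\epsilon-\tilde\eta_0\|_{B^{s+1}_{q,1}}\to0$. Without the localization step your argument only proves the theorem under an unstated smallness hypothesis on $\tilde\eta_0$.

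A secondary remark: for assertion (2), the paper does not deduce the $|\lambda|^{-\sigma/2}$ gain by real interpolation from (1); it propagates the shifted estimates \eqref{fundest.2}--\eqref{fundest.3} of the constant-coefficient result (Theorem~\ref{thm:kuo}) through the same perturbation chain. Your interpolation sketch can be made to work for $\bar\nabla^2\bv$ (interpolating $\|\bv\|_{B^{s+\sigma}}\lesssim|\lambda|^{-1}\|\bg\|_{B^{s+\sigma}}$ against $\|\bv\|_{B^{s+\sigma+2}}\lesssim\|\bg\|_{B^{s+\sigma}}$), but the identity $\pd_\lambda(\lambda\bI+A)^{-1}=-(\lambda\bI+A)^{-2}$ by itself gives an extra $|\lambda|^{-1}$, not the finer $|\lambda|^{-(1-\sigma/2)}$ against the \emph{weaker} source norm $B^{s-\sigma}_{q,1}$; you still need the shifted-regularity resolvent bound at level $s-\sigma$ as an input. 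So the logic of (2) should be ``prove (1) for $\nu=s,\ s\pm\sigma$ simultaneously, then combine,'' not ``prove (1) at $s$, then interpolate.''
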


In the sequel, we shall prove Theorem \ref{thm:3} as a perturbation from 
Lam\'e equations which read 
\begin{equation}\label{lame:2}
\eta_0(x)\lambda \bv - \alpha\Delta\bv -\beta\nabla\dv\bv =
\bg \quad \text{in $\HS$},  \quad \bv|_{\pd\HS} =0.
\end{equation}
for spectral parameter $\lambda \in \Sigma_\mu + \gamma$ with large enough $\gamma>0$.
Thus, we start with the existence theorem for equations \eqref{lame:2}.
\begin{thm}\label{thm:4.0}
Let $1 < q < \infty$, $-1 + 1/q < s < 1/q$, and $\sigma>0$.  Assume that $s$ and $\sigma$
satisfy \eqref{assump:s}
and \eqref{assump:sigma}, respectively.   Let $\nu = s$, or $s\pm\sigma$.  
Assume that $\tilde\eta_0 \in B^{N/q}_{q,1}$ only.   
Then, there exist constants 
$\gamma > 0$ and $C>0$ depending on $s$, $\sigma$, and $\|\tilde\eta_0\|_{B^{N/q}_{q,1}(\HS)}$
such that for any $\lambda \in \Sigma_\mu+ \gamma$
 and $\bg \in B^\nu_{q,1}(\HS)$, 
problem \eqref{lame:2} admits a unique solution
 $\bv \in B^\nu_{q,1}(\HS)^N$
satisfying  the estimate:
\begin{equation}\label{est:1.1}
\|(\lambda, \lambda^{1/2}\bar\nabla, \bar\nabla^2)
\bv\|_{B^{\nu}_{q,1}(\HS)} 
\leq C\|\bg\|_{B^\nu_{q,1}(\HS)}.
\end{equation}
\end{thm}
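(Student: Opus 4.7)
The plan is to treat Theorem~\ref{thm:4.0} as a perturbation of the constant-coefficient Lam\'e resolvent problem
\begin{equation*}
\gamma_0\lambda\bv-\alpha\Delta\bv-\beta\nabla\dv\bv=\bg \quad\text{in }\HS,\qquad \bv|_{\pd\HS}=0,
\end{equation*}
with a positive constant $\gamma_0$, which I would solve first.  Taking the partial Fourier transform in the tangential variables $x'=(x_1,\dots,x_{N-1})$, \eqref{lame:2} reduces to a $2N\times 2N$ system of ODEs in $x_N>0$ with Dirichlet data at $x_N=0$, whose solution symbols are of the standard Stokes/Lam\'e type.  Applying a Mikhlin-type Fourier multiplier theorem dyadic-block by dyadic-block (through the Littlewood--Paley characterization of $B^\nu_{q,1}(\HS)$) then yields the constant-coefficient estimate
\begin{equation*}
\|(\lambda,\lambda^{1/2}\bar\nabla,\bar\nabla^2)\bv\|_{B^\nu_{q,1}(\HS)}\le C\|\bg\|_{B^\nu_{q,1}(\HS)}
\end{equation*}
for $\lambda\in\Sigma_\mu+\gamma_0$ and every admissible $\nu$, provided $\gamma_0$ is large enough to render the symbol $\gamma_0\lambda\BI+\alpha|\xi|^2\BI+\beta\xi\xi^\top$ uniformly invertible on $\Sigma_\mu$.

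To pass to the variable coefficient $\eta_0=\gamma_*+\tilde\eta_0$, I would exploit the embedding $B^{N/q}_{q,1}(\HS)\hookrightarrow C^0_0(\overline{\HS})$, which makes $\tilde\eta_0$ uniformly continuous on $\overline{\HS}$ and vanish at infinity.  For any prescribed $\delta>0$ one then finds $r=r(\delta)>0$ and a locally finite partition of unity $\{\varphi_j\}$ subordinate to a cover by balls $B_r(x_j)$ with $|\tilde\eta_0(x)-\tilde\eta_0(x_j)|<\delta$ on $\supp\varphi_j$.  Setting $\bv_j=\varphi_j\bv$ and freezing the coefficient at $x_j$, $\bv_j$ satisfies a constant-coefficient Lam\'e equation (on $\HS$, or on $\BR^N$ when $\supp\varphi_j$ does not meet $\pd\HS$) with coefficient $\eta_0(x_j)$ and forcing
\begin{equation*}
\varphi_j\bg-(\eta_0-\eta_0(x_j))\lambda\bv_j+[\varphi_j,\alpha\Delta+\beta\nabla\dv]\bv.
\end{equation*}
The first perturbation is estimated via the product inequality $\|fu\|_{B^\nu_{q,1}}\le C\|f\|_{B^{N/q}_{q,1}}\|u\|_{B^\nu_{q,1}}$ combined with the $L_\infty$-smallness $\|\eta_0-\eta_0(x_j)\|_{L_\infty(\supp\varphi_j)}<\delta$; the commutator contains only $\bv$ and $\nabla\bv$ and is absorbed through the $\lambda$ and $\lambda^{1/2}\bar\nabla$ factors on the left once $|\lambda|\ge\gamma$ is sufficiently large.

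Summing over $j$ with finite overlap, then choosing first $\delta$ small and afterwards $\gamma$ large, the perturbative contributions absorb into the left-hand side and the a priori estimate \eqref{est:1.1} follows.  Existence is produced either by a Neumann-series argument in the same norms or by continuity-in-parameter along $\eta_\tau=\gamma_*+\tau\tilde\eta_0$, $\tau\in[0,1]$, starting from the constant-coefficient case; uniqueness is immediate from the a priori estimate.  The main obstacle is establishing the multiplier inequality $\|\tilde\eta_0 u\|_{B^\nu_{q,1}(\HS)}\le C\|\tilde\eta_0\|_{B^{N/q}_{q,1}(\HS)}\|u\|_{B^\nu_{q,1}(\HS)}$ for $\nu\in\{s,s\pm\sigma\}$ when $\nu$ lies in the negative range close to $-1+1/q$.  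Via Bony's paraproduct decomposition one must bound each of the three pieces $T_{\tilde\eta_0}u$, $T_u\tilde\eta_0$, and $R(\tilde\eta_0,u)$ in $B^\nu_{q,1}(\HS)$ by the right-hand side; these three conditions translate, after cancelling the common Bernstein exponent, precisely into the constraints \eqref{assump:s}--\eqref{assump:sigma} on $(s,\sigma)$ imposed in the statement, which is why those restrictions are exactly what is needed here.
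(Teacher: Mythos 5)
Your overall strategy — freeze the coefficient locally, construct a partition of unity, treat the residual coefficient as a perturbation, and close the estimate by a Neumann series after taking $\gamma$ large to absorb the commutator terms through the $|\lambda|^{-1/2}$ gain — is the same localization scheme the paper uses, and the constant-coefficient input you propose to derive via tangential Fourier transform and a dyadic Mikhlin argument plays the role of Theorem \ref{thm:kuo}.

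There is, however, a genuine gap in the absorption step. You estimate the frozen-coefficient error $(\eta_0-\eta_0(x_j))\lambda\bv_j$ by invoking the product inequality $\|fu\|_{B^\nu_{q,1}}\le C\|f\|_{B^{N/q}_{q,1}}\|u\|_{B^\nu_{q,1}}$ \emph{combined with} the $L_\infty$-smallness $\|\eta_0-\eta_0(x_j)\|_{L_\infty(\supp\varphi_j)}<\delta$. These two facts do not combine: the product inequality requires smallness of $\|\varphi_j(\tilde\eta_0-\tilde\eta_0(x_j))\|_{B^{N/q}_{q,1}}$, and this is \emph{not} controlled by the sup-norm of the oscillation on $\supp\varphi_j$. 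Indeed $L_\infty$ is not a pointwise multiplier algebra on $B^\nu_{q,1}$ for $\nu\ne 0$: in Bony's decomposition, the high-low piece $T_u\tilde\eta_0$ and the remainder $R(\tilde\eta_0,u)$ are not bounded by $\|\tilde\eta_0\|_{L_\infty}$. Nor does shrinking the support radius $r$ in any obvious way make the localized $B^{N/q}_{q,1}$-norm small: the cutoff contributes $r^{-1}$ factors to its derivatives, and the high-frequency content of $\tilde\eta_0$ survives the localization. The paper closes precisely this gap with Lemma \ref{Prop:B.1} (Danchin--Tolksdorf \cite[Proposition B.1]{DT22}), asserting $\lim_{d\to0}\|\varphi_{x_0,d}(\cdot)(\tilde\eta_0(\cdot)-\tilde\eta_0(x_0))\|_{B^{N/q}_{q,1}(\HS)}=0$ uniformly in $x_0$, and for the far-field patch with Lemma \ref{Prop.B2}, giving $\|\psi_R\tilde\eta_0\|_{B^{N/q}_{q,1}(\HS)}\to0$ as $R\to\infty$. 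Without these ingredients, or a substitute proof of the same Besov-norm smallness, your absorption step does not close. The rest of your outline (handling interior versus boundary patches, finite overlap, Neumann series or continuity-in-parameter for existence, the role of \eqref{assump:s}--\eqref{assump:sigma} in the product estimate) is consistent with the paper.
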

Before starting the proof of Theorem \ref{thm:4.0}, we show a lemma concerning the Besov norm estimates
of the product of functions.
\begin{lem}\label{lem:APH} Let $1 < q < \infty$, $-1+1/q < s < 1/q$ and $\sigma>0$.
Assume that $s$ and $\sigma$ satisfy conditions  \eqref{assump:s} and \eqref{assump:sigma},
respectively.  
Let $\nu = s$ or $s\pm\sigma$.  Then, for any $u \in B^\nu_{q,1}(\HS)$ and $v \in B^{N/q}_{q,1}(\HS)$
there holds
\begin{equation}\label{besovprod:1}
\|uv\|_{B^\nu_{q,1}(\HS)} \leq C_\nu\|u\|_{B^\nu_{q,1}(\HS)}\|v\|_{B^{N/q}_{q,1}(\HS)}.
\end{equation}
\end{lem}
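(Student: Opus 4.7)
The plan is to prove \eqref{besovprod:1} using Bony's paraproduct decomposition $uv = T_u v + T_v u + R(u,v)$ on the whole space $\BR^N$, and then transfer the estimate to the half-space $\HS$ via a bounded extension operator from $B^\mu_{q,1}(\HS)$ to $B^\mu_{q,1}(\BR^N)$ acting simultaneously for the values $\mu \in \{s-\sigma,\, s,\, s+\sigma,\, N/q\}$. All subsequent estimates are therefore carried out on $\BR^N$.

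For $T_v u$ I would use the critical embedding $B^{N/q}_{q,1}(\BR^N) \hookrightarrow L_\infty(\BR^N)$ together with the classical paraproduct bound $\|T_v u\|_{B^\nu_{q,1}} \leq C\|v\|_{L_\infty}\|u\|_{B^\nu_{q,1}}$, which holds without further restriction on $\nu$. For $T_u v$, since $\nu < 1/q \leq N/q$, the embedding $B^\nu_{q,1} \hookrightarrow B^{\nu - N/q}_{\infty,\infty}$ combined with the paraproduct estimate $\|T_u v\|_{B^\nu_{q,1}} \leq C\|u\|_{B^{\nu - N/q}_{\infty,\infty}}\|v\|_{B^{N/q}_{q,1}}$, which is valid because the low-regularity exponent $\nu - N/q$ is strictly negative, delivers the desired bound.

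The remainder $R(u,v) = \sum_j \Delta_j u\,\widetilde\Delta_j v$ I would control by a direct dyadic summation. Applying Bernstein's inequality $\|\widetilde\Delta_j v\|_{L_\infty} \leq C 2^{jN/q}\|\widetilde\Delta_j v\|_{L_q}$ to each diagonal block and using the fact that $\Delta_j u\,\widetilde\Delta_j v$ has frequency support in a ball of radius $\sim 2^j$, one finds that $\sum_k 2^{k\nu}\|\Delta_k R(u,v)\|_{L_q}$ is controlled by $\|u\|_{B^\nu_{q,1}}\|v\|_{B^{N/q}_{q,1}}$ provided $\nu + N/q > 0$. This strict inequality is precisely what assumption \eqref{assump:sigma} secures: for $s > 0$, $\nu \geq s - \sigma > 0 > -N/q$; for $N-1 < q < 2N$ with $s \leq 0$, the combination $s \geq -1 + N/q$ and $1 + \sigma < 2N/q$ yields $\nu \geq -1 + N/q - \sigma > -N/q$; and for $q \geq 2N$ with $s \leq 0$, the condition $|s| + \sigma < N/q$ forces $\nu \geq -|s| - \sigma > -N/q$.

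The main obstacle I anticipate is this remainder estimate. After exchanging the order of summation (sum over $k$, then over $j \geq k - c$), one obtains a geometric series in $2^{k\nu}$ summed up to $j + c$; this series converges exactly when $\nu > -N/q$, and the three subcases of \eqref{assump:s}--\eqref{assump:sigma} correspond precisely to the three regimes in which the inequality $\nu + N/q > 0$ has to be verified by hand. The delicate bookkeeping of these exponents, rather than any deep new idea, is where all the sharpness of the hypotheses on $s$ and $\sigma$ really enters.
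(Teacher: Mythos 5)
Your proposal takes a genuinely different route from the paper. The paper proves Lemma~\ref{lem:APH} in a few lines by invoking the Abidi--Paicu product estimate (Lemma~\ref{lem:APH*}, from \cite[Corollaire~2.5]{AP07}) with $\delta=0$, $\beta=q$, and verifying its hypotheses against \eqref{assump:s}--\eqref{assump:sigma}; you re-derive the estimate from first principles via Bony's decomposition. Your paraproduct bounds for $T_v u$ and $T_u v$ are standard and correct, and the explicit route has the merit of displaying exactly where the condition $\nu+N/q>0$ enters, instead of matching it against the rather opaque inequalities of Lemma~\ref{lem:APH*}.

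The remainder estimate as written has a gap. Your Bernstein step $\|\widetilde\Delta_j v\|_{L_\infty}\leq C\,2^{jN/q}\|\widetilde\Delta_j v\|_{L_q}$ carries the weight $2^{jN/q}$ at the \emph{block} scale $j$, not at the \emph{output} scale $k$. Setting $a_j=2^{j\nu}\|\Delta_j u\|_{L_q}$ and $b_j=2^{jN/q}\|\widetilde\Delta_j v\|_{L_q}$, this yields $2^{k\nu}\|\Delta_k R\|_{L_q}\lesssim\sum_{j\geq k-c}2^{(k-j)\nu}a_jb_j$, and after exchanging sums the factor $\sum_{k\leq j+c}2^{(k-j)\nu}$ is uniformly bounded in $j$ only when $\nu>0$. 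For $\nu\leq0$ --- precisely the regime that \eqref{assump:sigma} is designed to handle --- this computation does not close, and the threshold $\nu>-N/q$ you announce does not come out of it. To reach $\nu+N/q>0$ you must apply Bernstein at the output scale: $\Delta_k(\Delta_j u\,\widetilde\Delta_j v)$ has frequency support in a ball of radius $\sim2^k$, so H\"older into $L_{q/2}$ followed by the Nikolskii bound $\|\cdot\|_{L_q}\lesssim2^{kN/q}\|\cdot\|_{L_{q/2}}$ gives $\|\Delta_k(\Delta_j u\,\widetilde\Delta_j v)\|_{L_q}\lesssim2^{kN/q}\|\Delta_j u\|_{L_q}\|\widetilde\Delta_j v\|_{L_q}$; this replaces $2^{jN/q}$ by $2^{kN/q}$ and turns the inner sum into $\sum_{k\leq j+c}2^{(k-j)(\nu+N/q)}$, which is uniformly bounded precisely when $\nu+N/q>0$. (For $q<2$ the intermediate $L_{q/2}$ is only a quasi-norm; Nikolskii still holds for band-limited functions there, but the point deserves a word.) With this correction the proposal becomes a self-contained re-proof of the estimate the paper imports from \cite{AP07}.
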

\begin{proof} To prove this lemma, we use the following lemma  which follows from 
Abidi and Paicu \cite[Corollaire 2.5]{AP07}. Note that  \cite[Corollaire 2.5]{AP07} was proved
 in the homogeneous Besov spaces case 
originally but it holds also in the inhomogeneous Besov spaces by the consideration in  Haspot \cite{H11}. 
\begin{lem}\label{lem:APH*} Let $1 < q < \infty$ and $-1+1/q < s < 1/q$. 
Let $N < \beta < \infty$.  Let $\delta \geq 0$ and 
$q \leq \beta < Nq$ be numbers such that 
$$\delta \leq \frac{N}{\beta}-\frac1q.$$
If $\beta \geq q'$, assume additionally
\begin{equation}\label{prodcond:1}
\beta < \frac{N}{|s|} \quad\text{if $s<0$}. 
\end{equation}
Then, there holds 
\begin{equation}\label{prodest:1}\|uv\|_{B^s_{q,1}} 
\leq C\|u\|_{B^{s+\delta}_{q,1}}\|v\|_{B^{N/\beta-\delta}_{\beta, \infty} \cap L_\infty}
\end{equation}
for some constant $C>0$. \par
In particular, there holds 
\begin{equation}\label{prodest:1*}
\|uv\|_{B^s_{q,1}} \leq C\|u\|_{B^s_{q,1}}\|v\|_{B^{N/q}_{q,1}}.
\end{equation}
\end{lem}
\begin{remark}\label{rem:1} 
If  \eqref{prodest:1} holds with $\delta=0$ and $\beta \geq q$,  then \eqref{prodest:1*} holds. In fact, 
$B^{N/q}_{q,1}$ is continously imbedded into $L_\infty$ and so $\|v\|_{L_\infty} \leq C\|v\|_{B^{N/q}_{q,1}}$.
Moreover,  by  imbedding relations 	\cite[Theorem 9]{Muramatsu}, we have 
$$\|v\|_{B^{\frac{N}{\beta}}_{\beta, \infty}} \leq C\|v\|_{B^{\frac{N}{\beta} + N\left(\frac1q-
\frac1\beta\right)}_{q,1}} = C\|v\|_{B^{\frac{N}{q}}_{q,1}}.$$
Thus, $\|v\|_{B^{N/\beta}_{\beta, \infty} \cap L_\infty} \leq C\|v\|_{B^{N/q}_{q,1}}$, which shows 
\eqref{prodest:1*}.
\end{remark}
To prove Lemma \ref{lem:APH}, we use Lemma \ref{lem:APH*} with $\delta=0$ and $\beta=q$. 
Since $N \geq 2$, it holds obviously that $N/q-1/q >0$.  If $q \geq q'$ and $\nu <0$, 
then the requirement
is that $|\nu| <N/q$.  If $s > 0$, then $s\pm\sigma>0$, and so $\nu>0$.  If $s \leq 0$ and $N-1 < q < 2N$, noting
that $|s| \leq 1-N/q$, we see that 
$|\nu| \leq |s|+\sigma < 1-N/q + \sigma < N/q$ as follows from \eqref{assump:sigma}. 
If $s \leq 0$ and $q \geq 2N$, then $|\nu| \leq |s|+\sigma <N/q$ as also follows from \eqref{assump:sigma}.
Therefore, the requirements are satisfied, and so by Lemma \ref{lem:APH*} we have Lemma \ref{lem:APH}.
This completes the proof of Lemma \ref{lem:APH}. 
\end{proof}
{\bf Proof of  Theorem \ref{thm:4.0}.} To prove Theorem \ref{thm:4.0}, 
we shall construct an approximate solution 
for each point $x_0 \in \overline{\HS}$. Let $\nu = s$ or $s\pm\sigma$.    Recall that $\eta_0(x) = \gamma_*
+\tilde \eta_0(x)$ and $\tilde\eta_0 \in  B^{N/q}_{q,1}(\HS)$.
To construct an approximation solution, we use a theorem for unique existence of solutions
of  the constant coefficient Lam\'e equations which read 
\begin{equation}\label{fund:1}
\gamma_0\lambda\bv -\alpha\Delta \bv - \beta\nabla\dv\bv 
=\bg\quad\text{in $\HS$}, \quad 
\bv|_{\pd\HS}=0.
\end{equation}
From Kuo \cite{Kuo23} the following theorem follows.
\begin{thm}\label{thm:kuo} Let $1 < q < \infty$ and $-1+1/q < \nu < 1/q$. 
Assume that $\alpha$ and $\beta$ are 
constants satisfying the conditions:
\begin{equation}\label{assump:1}
\alpha >0, \quad \alpha + \beta>0.\end{equation}
Moreover, we assume that there exist positive constants $M_1$ and $M_2$ such that 
$$M_1 \leq \gamma_0  \leq M_2.$$ 
Then, there exists a $\gamma > 0$ independent of $\gamma_0$
 such that for any $\lambda \in 
\Sigma_\mu + \gamma$ and  $\bg \in B^\nu_{q,1}(\HS)$, 
problem \eqref{fund:1} admits a unique solution
$\bv \in B^{\nu+2}_{q,1}(\HS)$ satisfying
the estimate:
\begin{equation}\label{fundest.1} 
\|(\lambda, \lambda^{1/2}\bar\nabla,
\bar\nabla^2)\bv\|_{B^\nu_{q,1}(\HS)} \leq C\|\bg\|_{B^\nu_{q,1}(\HS)}
\end{equation}
for some constant $C$.  \par
Moreover, let $-1+1/q < s < 1/q$ and let $\sigma>0$ be a small positive constant such that 
$-1+1/q < s-\sigma < s < s+\sigma < 1/q$.  Then, for any $\lambda \in \Sigma_\mu + \gamma$ and 
$\bg \in B^{s\pm\sigma}_{q,1}(\HS) \cap B^s_{q,1}(\HS)$, 
a solution $\bv \in B^{s\pm\sigma+2}_{q,1}(\HS) \cap B^{s+2}_{q,1}(\HS)$ of equations \eqref{fund:1} satisfies 
the following estimates: 
\begin{align}
\|(\lambda, \lambda^{1/2}\bar\nabla, \bar\nabla^2)\bv\|_{B^s_{q,1}(\HS)}
&\leq C|\lambda|^{-\frac{\sigma}{2}} \|\bg\|_{B^{s+\sigma}_{q,1}(\HS)}, 
\label{fundest.2} \\
\|(\lambda, \lambda^{1/2}\bar\nabla, \bar\nabla^2)\pd_\lambda\bv\|_{B^s_{q,1}(\HS)}
&\leq C|\lambda|^{-(1-\frac{\sigma}{2})} \|\bg\|_{B^{s-\sigma}_{q,1}(\HS)}.
\label{fundest.3}
\end{align} 
\par
Here, the constants $\gamma$ and $C$ depend on $M_1$,  $M_2$, and $\nu$, 
but  independent
of $\gamma_0$  as far as the assumption \eqref{assump:1} holds. 
\end{thm}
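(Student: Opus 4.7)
The plan is to construct the resolvent of \eqref{fund:1} explicitly via partial Fourier transform in the tangential variables, then extract \eqref{fundest.1}, \eqref{fundest.2}, and \eqref{fundest.3} from the resulting symbol. Since $M_1 \leq \gamma_0 \leq M_2$, one can divide \eqref{fund:1} by $\gamma_0$ and absorb it into $\lambda$; the rescaled spectral parameter still ranges over a shifted sector $\Sigma_\mu + \gamma$ uniformly in $\gamma_0 \in [M_1, M_2]$, so it is enough to treat $\gamma_0 = 1$. Uniqueness then follows from the explicit formula below, as that representation provides the only solution in $B^{\nu+2}_{q,1}(\HS)^N$ with the required decay at infinity on $\Sigma_\mu + \gamma$.

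Write $x = (x', x_N)$ with $x' \in \BR^{N-1}$, and apply the partial Fourier transform $\CF'$ in $x'$ with dual variable $\xi'$. Decomposing $\bv = \nabla\phi + \bw$ with $\dv \bw = 0$ (or working componentwise), the system \eqref{fund:1} reduces to a constant-coefficient block ODE in $x_N$ whose characteristic roots
\[
a(\xi',\lambda) = \bigl(|\xi'|^2 + \tfrac{\lambda}{\alpha}\bigr)^{1/2}, \qquad b(\xi',\lambda) = \bigl(|\xi'|^2 + \tfrac{\lambda}{\alpha+\beta}\bigr)^{1/2}
\]
have positive real part on $\Sigma_\mu + \gamma$. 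Split $\bv = \bv_w + \bv_b$, where $\bv_w$ is the restriction to $\HS$ of the whole-space Lam\'e resolvent applied to an extension of $\bg$, and $\bv_b$ is a boundary corrector solving the homogeneous Lam\'e system with Dirichlet data $-\bv_w|_{\pd\HS}$. The whole-space piece is a Fourier multiplier whose symbol satisfies Mikhlin-type bounds with parabolic scaling $|\xi|^2 + |\lambda|$; its contribution to \eqref{fundest.1} follows from the characterisation $\|u\|_{B^\nu_{q,1}} = \sum_j 2^{j\nu}\|\Delta_j u\|_{L_q}$ combined with the standard Besov-space multiplier theorem. The corrector $\bv_b$ admits a Poisson-type representation whose tangential symbol is a rational function of $(\xi', a, b)$ multiplied by $e^{-a x_N}$ and $e^{-b x_N}$; the exponential decay in $x_N$ together with tangential Mikhlin estimates produces the matching bound for $\bv_b$ in the range $-1+1/q<\nu<1/q$, where the half-space trace and extension operators are uniformly bounded.

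For \eqref{fundest.2} and \eqref{fundest.3}, exploit the parabolic homogeneity of the resolvent symbol. Each of the multipliers for $\lambda \bv$, $\lambda^{1/2}\bar\nabla \bv$, and $\bar\nabla^2 \bv$ is uniformly bounded on $\Sigma_\mu + \gamma$, and a block-by-block comparison relative to the parabolic threshold $2^{2j} \sim |\lambda|$ yields the dyadic inequality $m_j(\lambda) \leq C|\lambda|^{-\sigma/2}\, 2^{j\sigma}$ for the $j$-th multiplier norm, with the low-frequency blocks absorbed using $|\lambda|\geq\gamma$. Applying this to the base estimate \eqref{fundest.1} with $\nu = s + \sigma$ upgrades it to \eqref{fundest.2}. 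Differentiating the resolvent symbol in $\lambda$ introduces the extra factor $(\gamma_0\lambda + \alpha|\xi|^2 + \cdots)^{-1}$, and the same dyadic interpolation applied to the base estimate at $\nu = s - \sigma$ yields \eqref{fundest.3}.

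The main obstacle is the sharp Besov-space analysis of the boundary corrector $\bv_b$. Its symbol contains the rational factor $(a+b)^{-1}$, which couples two distinct exponential scales in the normal direction and is not a pure tangential Fourier multiplier. Proving that $\bv_b$ satisfies the same $(\lambda, \lambda^{1/2}\bar\nabla, \bar\nabla^2)$ bounds as $\bv_w$ with the sharp parabolic $\lambda$-dependence requires a careful two-scale dyadic decomposition separating the $x_N$-direction from the tangential ones, and the strict range $-1 + 1/q < \nu < 1/q$ is precisely where the associated half-space trace and extension operators remain uniformly bounded.
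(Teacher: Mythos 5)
Your reduction of the general $\gamma_0$ to the case $\gamma_0=1$ by absorbing $\gamma_0$ into the spectral parameter is exactly what the paper does: the authors replace $\lambda$ by $\gamma_0\lambda$, note $M_1|\lambda|\le|\gamma_0\lambda|\le M_2|\lambda|$, and take $\gamma=\tilde\gamma M_1^{-1}$ so that $\gamma_0\lambda\in\Sigma_\mu+\tilde\gamma$ whenever $\lambda\in\Sigma_\mu+\gamma$. For the remaining (and essential) content, however, the paper does not prove anything: it cites the constant-coefficient half-space result of Kuo \cite{Kuo23} for the existence, uniqueness, and all three estimates \eqref{fundest.1}--\eqref{fundest.3}, so Theorem \ref{thm:kuo} is in this paper a black-box import plus a rescaling. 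You instead attempt to reconstruct the cited result via partial Fourier transform, a whole-space piece plus a Poisson-type boundary corrector, and dyadic/Mikhlin estimates; this is indeed the standard route such results are proved by, and your heuristics for the $\lambda$-powers (trading $2^{j\sigma}\lesssim|\lambda|^{\sigma/2}$ across the parabolic threshold to get $|\lambda|^{-\sigma/2}$, and the extra $|\lambda|^{-1}$ from $\pd_\lambda$ of the resolvent symbol to get $|\lambda|^{-(1-\sigma/2)}$) are the correct ones.

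The gap is that the core of the argument is asserted rather than carried out, and you say so yourself: the Besov bounds for the boundary corrector $\bv_b$, whose symbol involves $(a+b)^{-1}$ and the two exponentials $e^{-ax_N}$, $e^{-bx_N}$, are precisely where the restriction $-1+1/q<\nu<1/q$ enters and where all the work lies, and you leave this as ``the main obstacle.'' In addition, \eqref{fundest.3} requires controlling $\pd_\lambda$ of the Poisson kernels, which produces factors of $x_N\pd_\lambda a$ that must be absorbed into the exponential decay; this is standard but not automatic and is not addressed. Your uniqueness claim (``the representation provides the only solution with the required decay'') is also not an argument; one normally uses duality or the injectivity of the resolvent on the domain. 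As written, your proposal is a correct strategy outline for the result the paper cites, not a proof of it; to be self-contained it would need the two-scale analysis of $\bv_b$ that you flag, whereas to match the paper it would suffice to invoke \cite{Kuo23} for $\gamma_0=1$ and keep only your (correct) rescaling step.
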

\begin{remark} (1) The same assertions hold for the  whole space problem: 
\begin{equation}\label{fund:2}\begin{aligned}
\gamma_0\lambda\bv -\alpha\Delta \bv - \beta\nabla\dv\bv 
=\bg&&\quad&\text{in $\WS$}.
\end{aligned}\end{equation}
(2) For any $\lambda \in \Sigma_\mu + \gamma$ and $\bg \in B^s_{q,1}(\HS)$, there holds
\begin{equation}
\label{fundest:3*}
\|\bv\|_{B^s_{q,1}(\HS)} \leq C|\lambda|^{-(1-\frac{\sigma}{2})}\|\bg\|_{B^{s-\sigma}_{q,1}(\HS)}.
\end{equation}
In fact, we write resolvent by $\CS_0(\lambda)\bg$, which is holomorphic function with respect
to $\lambda \in\Sigma_\mu + \gamma$.  In fact, $\CS_0(\lambda)\bg=\bv$.  Differentiating
equations \eqref{fund:1} with respect to $\lambda$, we have
\begin{equation}\label{fund:1*}
\gamma_0\lambda \pd_\lambda - \alpha\Delta \pd_\lambda\bv - \beta\nabla\dv\pd_\lambda\bv
= -\gamma_0\bv \quad\text{in $\HS$}, \quad \pd_\lambda\bv|_{\pd\HS} = 0.
\end{equation}
Thus, we have $\pd_\lambda\bv = -\gamma_0\CS_0(\lambda)\bv 
= -\gamma_0\CS_0(\lambda)\CS_0(\lambda)\bg$. Let $D^{s+2}_{q,1}(\HS) = \{\bu \in B^{s+2}_{q,1}(\HS) \mid
\bu|_{\pd\HS}=0\}$. Since $\CS_0(\lambda)$ is a surjective map from $B^s_{q,1}(\HS)$ onto 
$D~{s+2}_{q,1}(\HS)$, and so the inverse map $\CS_0(\lambda)^{-1}$ exists and it is a surjective
map from $D^{s+2}_{q,1}(\HS)$ onto $B^s_{q,1}(\HS)$.  Thus, $\bv
= -\gamma_0^{-1}\CS_0(\lambda)\pd_\lambda\bv$. By \eqref{fundest.3}, we have
$$\|\bv\|_{B^s_{q,1}(\HS)} \leq C\|\bar\nabla^2\pd_\lambda\bv\|_{B^s_{q,1}} 
\leq C|\lambda|^{-(1-\frac{\sigma}{2})}\|\bg\|_{B^{s-\sigma}_{q,1}(\HS)},
$$
which shows \eqref{fundest:3*}.  From this consideration it follows that 
\eqref{fundest.3} and \eqref{fundest.3*} is equivalent.
\end{remark}
\begin{proof} When $\gamma_0=1$, by a result due to Kuo \cite{Kuo23} 
there exist constants $C$ and $\tilde\gamma$ such that 
the existense of solutions and \eqref{fundest.1}--\eqref{fundest.3} hold. 
Here, 
the constants $C$ and $\tilde\gamma>0$ depend only on $\alpha$ and $\beta$.   
 When $\gamma_0\not=1$,  the existense of solutions and estimates
\eqref{fundest.1}--\eqref{fundest.3} hold,  replacing $\lambda$ with $\gamma_0\lambda$,
provided that  $\gamma_0\lambda \in \Sigma_\mu+ \tilde\gamma$. Since
$M_1 \leq \gamma_0 \leq M_2$, we see that $M_1|\lambda| \leq |\gamma_0\lambda| \leq M_2|\lambda|$. 
Thus, choosing $\gamma = \tilde\gamma M_1^{-1}$, we see that
$\gamma_0\lambda \in \Sigma_\mu + \tilde\gamma$ when
$\lambda \in \Sigma_\mu + \gamma$. From this consideration,
Theorem \ref{thm:kuo} follows from the $\gamma_1=1$ case. 
Here, the constants $C$ and $\gamma$ depend on $\alpha$, $\beta$, $M_1$ and $M_2$.
\end{proof}

We continue the proof of Theorem \ref{thm:4.0}. First we consider the case where $x_0 \in \pd\HS$. 
We write 
$$B_d(x_0) = \{x \in \BR^N \mid |x-x_0| \leq d\}, 
\quad B_d = B_d(0).$$
 Let  
$\varphi \in C^\infty_0(B_2(0))$ and $\psi \in C^\infty_0(B_3(0))$
such that 
 $\varphi(x)=1$ for 
$x \in B_1(0)$ and $\psi(x) =1$ for $x \in B_{2}(0)$
 and set $\varphi_{x_0}(x) = \varphi((x-x_0)/d)$ and $\psi_{x_0}(x)=
\psi((x-x_0)/d)$.  Notice that $\varphi_{x_0}(x)=1$ for $x \in B_d(x_0)$ 
and $=0$ for $x \not\in B_{2d}(x_0)$ and that $\psi_{x_0}(x) = 1$ on
${\rm supp}\,\varphi_{x_0}$ and $\psi_{x_0}(x)=0$ for $x \not\in B_{3d}(x_0)$.
In particular, $\varphi_{x_0}\psi_{x_0} = \varphi_{x_0}$.  \par
Let $\bv \in B^s_{q,1}(\HS)^N$ be a solution of equations:
\begin{equation}\label{st:1}
\eta_0(x_0)\lambda\bv - \alpha\Delta\bv - \beta\nabla\dv\bv
 = \bg\quad\text{in $\HS$}, \quad 
\bv|_{\pd\HS} =0.
\end{equation}
For simplicity, we omit $\HS$ for the description of function spaces and thier norms like
$B^\nu_{q,1}=B^{\nu}_{q,1}(\HS)$ and $\|\cdot\|_{B^\nu_{q,1}} = \|\cdot\|_{B^\nu_{q,1}(\HS)}$
in what follows. We define an operator 
$\bT_{x_0}(\lambda)$ acting on $\bg \in B^\nu_{q,1}$ by
$\bv = \bT_{x_0}(\lambda)\bg$.
By \eqref{assump:0}, $\rho_1 < \eta_0(x_0) < \rho_2$, and so 
 by Theorem \ref{thm:kuo} there exist constants
$C$ and $\gamma$ independent of $x_0$ such that 
\begin{equation}\label{est:f1}
\|(\lambda, \lambda^{1/2}\bar\nabla, \bar\nabla^2)\bT_{x_0}(\lambda)\bg\|_{B^\nu_{q,1}}
\leq C\|\bg\|_{B^\nu_{q,1}}
\end{equation}
for every $\lambda \in \Sigma_\mu + \gamma$. 
 Let $A_{x_0} = \eta_0(x_0)
+ \psi_{x_0}(x)(\eta_0(x)-\eta_0(x_0))$.
And then, $\bv$ satisfy the following
equations:
\begin{equation}\label{st:2}
A_{x_0}\lambda\bv - \alpha\Delta\bv - \beta\nabla\dv\bv
 = \bg
+ \bS_{x_0}(\lambda)\bg\quad\text{in $\HS$}, \quad 
\bv|_{\pd\HS} =0.
\end{equation}
Here, we have set 
\begin{align*}
\bS_{x_0}(\lambda)\bg & = 
\psi_{x_0}(x)(\eta_0(x)-\eta_0(x_0))\lambda\bv.
\end{align*}

We now estimate $\psi_{x_0}(\eta_0(x_0)-\eta_0(x))\lambda\bv$.  
Note that $\eta_0(x)-\eta_0(x_0) = \tilde\eta_0(x)-\tilde\eta_0(x_0)$. 
By Lemma \ref{lem:APH}, we have
\begin{equation}\label{fundest:1}
\|\psi_{x_0}(\eta_0(x_0)-\eta_0(x))\lambda\bv\|_{B^{\nu}_{q,1}} \leq 
C\|\psi_{x_0}(\tilde\eta_0(x_0)-\tilde\eta_0(x))
\|_{B^{N/q}_{q,1}}\|\lambda\bv\|_{B^\nu_{q,1}}. 
\end{equation}
To estimate $\|\psi_{x_0}(\eta_0(x_0)-\eta_0(x))\|_{B^{N/q}_{q,1}}$, 
we use the following lemma  due to Danchin-Tolksdorf \cite[Proposition B.1]{DT22}.
\begin{lem}\label{Prop:B.1} Let $f \in B^{N/q}_{q,1}(\HS)$ for some $1 \leq q \leq \infty$.  
Then, 
$$
\lim_{d\to0}\|\varphi_{x_0, d}(\cdot)(f(\cdot)-f(x_0))\|_{B^{N/q}_{q,1}(\HS)}=0\quad\text{uniformly with respect to $x_0$}.
$$
\end{lem}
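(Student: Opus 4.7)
The plan is a density-plus-splitting argument relying on an algebra/product estimate and a scale invariance at the critical Besov index. Since $B^{N/q}_{q,1}(\HS)$ embeds continuously into the bounded continuous functions on $\overline{\HS}$ and $C^\infty_0(\overline{\HS})$ is dense in $B^{N/q}_{q,1}(\HS)$ for $1 \leq q < \infty$, the pointwise value $f(x_0)$ is well defined. Given $\varepsilon > 0$, I pick $f_\varepsilon \in C^\infty_0(\overline{\HS})$ with $\|g\|_{B^{N/q}_{q,1}(\HS)} < \varepsilon$, where $g := f - f_\varepsilon$, and decompose
\[
\varphi_{x_0,d}(f - f(x_0)) = \varphi_{x_0,d}\bigl(f_\varepsilon - f_\varepsilon(x_0)\bigr) + \varphi_{x_0,d}\bigl(g - g(x_0)\bigr).
\]

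For the rough remainder I use the fact that $B^{N/q}_{q,1}$ is a Banach algebra (a classical consequence of the Bony paraproduct decomposition), together with the critical-index scale invariance: a direct Littlewood--Paley computation gives $\|\varphi_{x_0,d}\|_{B^{N/q}_{q,1}(\BR^N)} \leq C$ uniformly in $x_0 \in \BR^N$ and $d \in (0,1]$, so also $\|\varphi_{x_0,d}\|_{B^{N/q}_{q,1}(\HS)} \leq C$ after restriction. Combined with the embedding $|g(x_0)| \leq \|g\|_{L_\infty} \leq C\|g\|_{B^{N/q}_{q,1}}$, this yields
\[
\|\varphi_{x_0,d}(g - g(x_0))\|_{B^{N/q}_{q,1}(\HS)} \leq C\|\varphi_{x_0,d}\|_{B^{N/q}_{q,1}(\HS)}\bigl(\|g\|_{B^{N/q}_{q,1}} + |g(x_0)|\bigr) \leq C'\varepsilon,
\]
with $C'$ independent of $x_0$ and $d$.

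For the smooth part $h := \varphi_{x_0,d}(f_\varepsilon - f_\varepsilon(x_0))$, note that $\supp h \subset B_{2d}(x_0)$ and $|f_\varepsilon(x) - f_\varepsilon(x_0)| \leq 2d\|\nabla f_\varepsilon\|_{L_\infty}$ on that support. A Leibniz expansion using $\|D^k \varphi_{x_0,d}\|_{L_\infty} \leq C_k d^{-k}$ produces, for each $k$, the pointwise bound $|D^k h| \leq C_{\varepsilon,k} d^{1-k}$ on $\supp h$; multiplying by the support-volume factor gives $\|h\|_{L_q(\HS)} \leq C_\varepsilon d^{N/q + 1}$ and $\|h\|_{W^m_q(\HS)} \leq C_\varepsilon d^{N/q + 1 - m}$ for any integer $m > N/q$, with constants depending only on $f_\varepsilon$. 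The real-interpolation identity $B^{N/q}_{q,1} = (L_q, W^m_q)_{N/(mq), 1}$ together with the interpolation inequality then yields
\[
\|h\|_{B^{N/q}_{q,1}(\HS)} \leq C \|h\|_{L_q}^{1 - N/(mq)} \|h\|_{W^m_q}^{N/(mq)} \leq C_\varepsilon d,
\]
which vanishes as $d \to 0$ uniformly in $x_0$. Letting $d \to 0$ first and then $\varepsilon \to 0$ finishes the proof.

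The step I expect to require the most care is the uniform bound $\|\varphi_{x_0,d}\|_{B^{N/q}_{q,1}(\HS)} \leq C$ independently of $x_0$ and $d$. Since the half-space norm is defined via optimal extensions, one must be careful near the boundary: for $x_0$ close to $\pd\HS$ the support of $\varphi_{x_0,d}$ may intersect $\BR^N\setminus\HS$, and the cleanest route is to work on $\BR^N$ first, where the critical scale invariance is transparent from the Littlewood--Paley definition, and then use the standard restriction estimate $\|\cdot\|_{B^{N/q}_{q,1}(\HS)} \leq C\|\cdot\|_{B^{N/q}_{q,1}(\BR^N)}$ to transfer the bound back to $\HS$.
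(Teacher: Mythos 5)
Your argument is correct, and its structure (density, product estimate for the rough remainder, direct $L_q$--$W^m_q$ interpolation for the smooth part) is essentially the same as the one the paper uses for the \emph{analogous} far-field statement, Lemma~\ref{Prop.B2}: there, the authors also approximate $f$ by a function $g \in W^m_q$, bound $\|\psi_R(f-g)\|_{B^{N/q}_{q,1}}$ by $C\|f-g\|_{B^{N/q}_{q,1}}$ with $C$ independent of $R$ (the multiplier analogue of your uniform bound on $\|\varphi_{x_0,d}\|_{B^{N/q}_{q,1}}$), and treat the smooth part with the interpolation inequality $\|\psi_R g\|_{B^{N/q}_{q,1}} \leq C\|\psi_R g\|_{L_q}^{1-N/(mq)}\|\psi_R g\|_{W^m_q}^{N/(mq)}$. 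For Lemma~\ref{Prop:B.1} itself the paper just cites Danchin--Tolksdorf and does not reproduce a proof, so you are in effect supplying a self-contained argument that the paper outsources. Two small remarks. First, you correctly split $\varphi_{x_0,d}(g-g(x_0))=\varphi_{x_0,d}g - g(x_0)\varphi_{x_0,d}$ before applying the algebra property; this sidestepping is essential, since $g-g(x_0)$ itself is not in $L_q(\HS)$. Second, the uniform bound $\|\varphi_{x_0,d}\|_{B^{N/q}_{q,1}(\BR^N)}\le C$ for $d\in(0,1]$ is the point deserving the most care, and your justification is sound: for $s=N/q>0$ the inhomogeneous norm is comparable to $\|\cdot\|_{L_q}+\|\cdot\|_{\dot B^{N/q}_{q,1}}$, the second term is exactly scale invariant and the first scales like $d^{N/q}\to 0$, so the sum is uniformly bounded and the restriction operator to $\HS$ preserves this bound.
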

By Lemma \ref{Prop:B.1}, for any $\delta > 0$ there exists a $d>0$  such that 
\begin{equation}\label{small:0.1}
\|\psi_{x_0}(\eta_0(x_0)-\eta_0(x))\|_{B^{N/q}_{q,1}} \leq \delta
\end{equation}
Notice that the choice of distance $d$ is independent of $x_0$. 
From \eqref{fundest:1} and \eqref{small:0.1}, it follows that 
\begin{equation}\label{est:2}\begin{aligned}
\|\bS_{x_0}(\lambda)\bg\|_{B^\nu_{q,1}}
\leq C\delta\|\lambda\bv\|_{B^\nu_{q,1}}.
\end{aligned}\end{equation}
Choosing $d>0$ so small that $C\delta \leq 1/2$, 
we have $\|\bS_{x_0}\|_{\CL(B^\nu_{q,1})} \leq 1/2$.   
Thus, 
the inverse $(\bI + \bS_{x_0}(\lambda))^{-1}$ of the operator $\bI + \bS_{x_0}(\lambda)$
exists, 
where $\bI$ is the identity operator on $ B^\nu_{q,1}$.
Recalling the operator $\bT_{x_0}(\lambda)$ is defined 
by $\bv = \bT_{x_0}(\lambda)\bg$, 
and setting $\bw_{x_0} = \bT_{x_0}(\lambda)(\bI + \bS_{x_0}(\lambda))^{-1}\bg$, 
by \eqref{est:f1} we see that 
$\bw_{x_0}$ satisfies equations:
\begin{equation}\label{st:3*}
A_{x_0}\lambda\bw_{x_0} - \alpha\Delta\bw_{x_0} - \beta\nabla\dv
 = \bg\quad\text{in $\HS$}, \quad 
\bw|_{\pd\HS} =0, \end{equation}
as well as the estimate
\begin{equation}\label{est:m1}
\|(\lambda, \lambda^{1/2}\bar\nabla, \bar\nabla^2)\bw_{x_0}\|_{B^\nu_{q,1}}
\leq C\|(\bI+\bS_{x_0}(\lambda))^{-1}\bg\|_{B^\nu_{q,1}}
\leq C\|\bg\|_{B^\nu_{q,1}}
\end{equation}
for every $\lambda \in \Sigma_\mu + \gamma$, where $C$ is independent of 
$d$, and  $\gamma>0$ is the same as in Theorem \ref{thm:kuo}. \par 
Finally,  we set $\bv_{x_0} = \varphi_{x_0}\bw_{x_0}$.  Since 
$\psi_{x_0}\varphi_{x_0} = \varphi_{x_0}$, we have 
$A_{x_0}\varphi_{x_0} = \eta_0(x)\varphi_{x_0}$.
From \eqref{st:3*} it follows that
\begin{equation}\label{st:4*}
\eta_0(x)\lambda\bv_{x_0} - \alpha\Delta\bv_{x_0} - \beta\nabla\dv\bv_{x_0}
 = \varphi_{x_0}\bg
+ \bU_{x_0}(\lambda)\bg\quad\text{in $\HS$}, \quad 
\bv_{x_0}|_{\pd\HS} =0, 
\end{equation}
where we have set
\begin{align*}
\bU_{x_0}(\lambda)\bg & = -\alpha((\Delta\varphi_{x_0})\bw_{x_0} + 2(\nabla\varphi_{x_0})\nabla\bw_{x_0})
-\beta(\nabla((\nabla\varphi_{x_0})\cdot\bw_{x_0}) + (\nabla\varphi_{x_0})\dv\bw_{x_0}).
\end{align*}
From \eqref{est:m1}, we see that 
\begin{equation}\label{pert:2}\begin{aligned}
\|(\lambda, \lambda^{1/2}\bar\nabla, \bar\nabla^2)\bv_{x_0}\|_{B^\nu_{q,1}}
\leq C_d\|\bg\|_{B^\nu_{q,1}},
\end{aligned}\end{equation}
as well as 
\begin{equation}\label{remainder:1}
\|\bU_{x_0}(\lambda)\bg\|_{B^\nu_{q,1}} \leq C_d|\lambda|^{-1/2}\|\bg\|_{B^\nu_{q,1}}
\end{equation}
for every $\lambda \in \Sigma_\mu + \gamma$. 
Here,  $C_d$ is a constant depends solely on
$d>0$ such that $C_d\to \infty$ as $d\to0$.

\par
Next, we pick up $x_1 \in \HS$ and we choose 
$d_1>0$ such that $B_{3d_1}(x_1) \subset \HS$.
Let $\varphi_{x_1}(x) = \varphi((x-x_1)/d_1)$ and $\psi_{x_1}(x) = \psi((x-x_1)/d)$.
 Analogously  to \eqref{pert:2} and 
\eqref{remainder:1}, if we choose $d_1>0$ small enough, there exist a  
$\bw_{x_1} \in B^{s+2}_{q,1}(\WS)$ satisfying equations
\begin{equation}\label{st:4*}
A_{x_1}\lambda\bw_{x_1} - \alpha\Delta\bw_{x_1} - \beta\nabla\dv\bw_{x_1}
= \bg\quad\text{in $\HS$}, \quad 
\bw_{x_1}|_{\pd\HS} =0, 
\end{equation}
where $A_{x_1} = \eta_0(x_1) + \psi_{x_1}(\eta_0(x)
-\eta_0(x_1))$, and the estimate:
\begin{equation}\label{est:3*}
\|(\lambda, \lambda^{1/2}\bar\nabla, \bar\nabla^2)\bw_{x_1}\|_{B^\nu_{q,1}}
\leq C\|\bg\|_{B^\nu_{q,1}}.
\end{equation}
Let $\bv_{x_1}=
\varphi_{x_1}\bw_{x_1}$ and then $\bv_{x_1}$ satisfies equations:
\begin{equation}\label{st:4}
\eta_0(x)\lambda\bv_{x_1} - \alpha\Delta\bv_{x_1} - \beta\nabla\dv\bv_{x_1}
= \varphi_{x_1}\bg
+ \bU_{x_1}(\lambda)\bg\quad\text{in $\HS$}, \quad 
\bv_{x_1}|_{\pd\HS} =0, 
\end{equation}
where we have set
\begin{align*}
\bU_{x_1}(\lambda)\bg & = -\alpha((\Delta\varphi_{x_1})\bw_{x_1} + 2(\nabla\varphi_{x_1})\nabla\bw_{x_1})
-\beta(\nabla((\nabla\varphi_{x_1})\cdot\bw_{x_1}) + (\nabla\varphi_{x_1})\dv\bw_{x_1}).
\end{align*}
Moreover, by \eqref{est:3*}, we have 
\begin{gather}\label{pert:3}
\|(\lambda, \lambda^{1/2}\bar\nabla, \bar\nabla^2)\bv_{x_1}\|_{B^\nu_{q,1}}
\leq C_{d_1}\|\bg\|_{B^\nu_{q,1}}, \\
\|\bU_{x_1}\bg\|_{B^\nu_{q,1}}  \leq C_{d_1}|\lambda|^{-1/2}\|\bg\|_{B^\nu_{q,1}}
\label{remainder:2}
\end{gather}
for every $\lambda \in \Sigma_\mu + \gamma$, where $C_{d_1}$ is a constant depends solely on
$d_1>0$ such that $C_{d_1}\to \infty$ as $d_1\to0$. 


Finally, we consider  the far field case. 
Let $\tilde\psi \in C^\infty(\BR)$ which equals to $1$ for 
$|x| \geq2$ and  $0$ for $|x| \leq 1$, and set $\psi_R(x) = \tilde\psi(x/R)$. 
Let 
 $\bv$ be a  solution of equations 
\begin{equation}\label{s:3*}
\gamma_*\lambda\bv - \alpha\Delta \bv  -\beta\nabla\dv\bv 
 = \bg \quad\text{in $\HS$}, \quad 
\bv|_{\pd\HS} =0.
\end{equation}
We define an operator $\bT_R(\lambda)$ by $\bv
= \bT_R(\lambda)\bg$. By Theorem \ref{thm:kuo}, we have
\begin{equation}\label{est:2.2}
\|(\lambda, \lambda^{1/2}\bar\nabla, \bar\nabla^2)\bT_R(\lambda)\bg\|_{B^\nu_{q,1}(\HS)}
\leq C\|\bg\|_{ B^\nu_{q,1}}.
\end{equation}
 Set 
$A_R= \gamma_* + \psi_R(\eta_0(x)-\gamma_*) =\gamma_* + \psi_R\tilde\eta_0$.
By \eqref{s:3*}, we have
\begin{equation}\label{s:4*}
A_R\lambda\bv - \alpha\Delta \bv  -\beta\nabla\dv\bv 
 = \bg + \bS_{R}(\lambda)\bg\quad\text{in $\HS$}, \quad
\bv|_{\pd\HS} =0,
\end{equation}
where we have set
\begin{align*}
\bS_{R}(\lambda)\bg & = \psi_R(x)\tilde\eta_0(x)\lambda\bv.
\end{align*}
By Lemma \ref{lem:APH}, 
we have
\begin{equation}\label{fundest:2}
\|\bS_R(\lambda)\bg\|_{B^\nu_{q,1}}
\leq C\|\psi_R\tilde\eta_0\|_{B^{N/q}_{q,1}}
\|\lambda\bv\|_{B^\nu_{q,1}}.
\end{equation}
For any $\delta >0$ there exists an $R$ such that 
\begin{equation}\label{small:2}
\|\psi_R\tilde\eta_0\|_{B^{N/q}_{q,1}}  \leq \delta.
\end{equation}
This fact follows from the following lemma, the idea of whose proof is completely the same 
as in the proof of \cite[Proposition B.1]{DT22}.
\begin{lem}\label{Prop.B2} Let $f \in B^{N/q}_{q,1}(\HS)$ for some $1 \leq q \leq \infty$.  Then,
for any $\delta > 0$, there exists an $R > 1$ such that 
$$\|\psi_R f\|_{B^{N/q}_{q,1}(\HS)}  < \delta.$$
\end{lem}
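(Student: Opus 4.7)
The plan is to prove the lemma by combining a uniform-in-$R$ multiplier estimate for $\psi_R$ on $B^{N/q}_{q,1}(\HS)$ with a density argument, in complete analogy with the localization proof of Lemma \ref{Prop:B.1}.

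First, I would establish that multiplication by $\psi_R$ is a bounded operator on $B^{N/q}_{q,1}(\HS)$ whose norm is bounded by a constant $M$ \emph{independent of} $R \geq 1$. This follows from the standard smooth-multiplier theorem for inhomogeneous Besov spaces. Indeed, the scaling relation
$$
\pd^\alpha \psi_R(x) = R^{-|\alpha|}(\pd^\alpha \tilde\psi)(x/R)
$$
gives $\|\pd^\alpha \psi_R\|_{L_\infty(\WS)} \leq \|\pd^\alpha \tilde\psi\|_{L_\infty(\WS)}$ for every multi-index $\alpha$ and every $R \geq 1$. Applying the paraproduct decomposition to $\psi_R h$ (splitting into low-high, high-low, and remainder) and noting that only finitely many sup-norms of derivatives of $\psi_R$ enter the resulting estimate, one obtains
$$
\|\psi_R h\|_{B^{N/q}_{q,1}(\HS)} \leq M \|h\|_{B^{N/q}_{q,1}(\HS)} \qquad (R \geq 1),
$$
with $M$ depending only on $\tilde\psi$ (and on $q$, $N$), not on $R$.

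Second, I would use that $C^\infty_0(\overline{\HS})$ is dense in $B^{N/q}_{q,1}(\HS)$ for $1 \leq q < \infty$ (which holds since the third index $r = 1$ is finite and the space is defined by restriction from $\WS$). Given $\delta > 0$, choose $g \in C^\infty_0(\overline{\HS})$ with $\|f - g\|_{B^{N/q}_{q,1}(\HS)} < \delta/(2M)$, and pick $R_0 > 0$ such that $\supp g \subset B_{R_0}$. For every $R \geq R_0$, the definition of $\tilde\psi$ forces $\psi_R(x) = \tilde\psi(x/R) = 0$ whenever $|x| \leq R$, so in particular $\psi_R$ vanishes identically on $\supp g$ and $\psi_R g \equiv 0$. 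The triangle inequality together with the multiplier bound then yields
$$
\|\psi_R f\|_{B^{N/q}_{q,1}(\HS)} \leq \|\psi_R(f-g)\|_{B^{N/q}_{q,1}(\HS)} + \|\psi_R g\|_{B^{N/q}_{q,1}(\HS)} \leq M \cdot \frac{\delta}{2M} + 0 < \delta,
$$
which is the claim.

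The only genuine obstacle is the uniform-in-$R$ multiplier estimate in the first step; the rest of the argument is a routine approximation. In the endpoint case $q = \infty$ the density of $C^\infty_0(\overline{\HS})$ can fail, and one would instead truncate the Littlewood--Paley expansion of $f$ and exploit dominated convergence of the dyadic sum $\sum_j 2^{jN/q}\|\Delta_j f\|_{L_q}$; however, the application in Section \ref{sec.2} requires only $1 < q < \infty$, so the density route above is sufficient.
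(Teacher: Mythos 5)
Your proposal is correct, and it reaches the conclusion by a genuinely different route from the paper. Both arguments share the same skeleton: a uniform-in-$R$ multiplier estimate $\|\psi_R h\|_{B^{N/q}_{q,1}(\HS)} \leq M\|h\|_{B^{N/q}_{q,1}(\HS)}$ for $R\geq 1$ (you state and justify this explicitly via scaling and paraproducts; the paper uses it implicitly in its density step, asserting $\|\psi_R(g-f)\|_{B^{N/q}_{q,1}(\HS)} \leq C\|g-f\|_{B^{N/q}_{q,1}(\HS)}$ with $C$ independent of $R$), followed by an approximation of $f$ by a ``nice'' $g$ and a triangle inequality. The divergence is entirely in how one shows $\|\psi_R g\|_{B^{N/q}_{q,1}(\HS)}\to 0$ for the approximant: the paper takes $g\in W^m_q(\HS)$ with $m>N/q$, exploits smallness of the $L_q$ and $W^m_q$ tail norms on $B_R^c$ together with the scaling gain $R^{-(m-|\beta|)}$ from derivatives falling on $\psi_R$, and then pushes this into $B^{N/q}_{q,1}$ by real interpolation between $L_q$ and $W^m_q$; you take $g\in C^\infty_0(\overline{\HS})$ and observe that $\psi_R g\equiv 0$ once $R$ exceeds the radius of $\supp g$, so the term vanishes identically and no interpolation is needed. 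Your route is more economical, trading the paper's tail estimate plus interpolation for the single cleaner fact that compactly supported smooth functions are dense; conversely, the paper's $W^m_q$-approximation is self-contained in the sense that it displays the quantitative decay explicitly rather than relying on exact vanishing. The minor caveat you flag about $q=\infty$ is apposite but irrelevant here, since the application only uses $1<q<\infty$.
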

\begin{proof} 
Let $m$ be an integer such that $N/q < m$.  Notice that $W^m_q(\HS)$ is dense in
$B^{N/q}_{q,1}(\HS)$.  Thus, first we assume that $f \in W^m_q(\HS)$.  Then, 
$\|f\|_{W^m_q(\HS)} < \infty$ and $\|f\|_{L_q(\HS)} < \infty$, which implies that 
for any $\delta>0$, there exists an $R > 0$ such that $\|f\|_{W^m_q(B_R^c)} < \delta$ and 
$\|f\|_{L_q(B_R^c)}  < \delta$.  Here, $B_R^c = \{x \in \BR^N \mid |x| \geq R\}$.  Thus, 
$\|\psi_Rf\|_{W^m_q(\HS)} < \delta$ and $\|\psi_Rf\|_{L_q(\HS)} < \delta$.  In fact, 
$$\|\psi_Rf\|_{W^m_q(\HS)} \leq C_m\sum_{|\beta|\leq m}R^{-(m-|\beta|)}\|D^\beta f\|_{L_q(B_R^c)}
\leq C_m\|f\|_{L_q(B_R^c)}
$$
for any $R \geq 1$ with some constant $C_m$ depending only on $m$ and $D^\alpha\tilde\psi$ ($|\alpha| \leq m$). Thus, 
choosing $R>0$ larger if necessary, we have $\|\psi_Rf\|_{W^m_q(\HS)} < \delta.$ \par

Since $\|\psi_R f\|_{B^{N/q}_{q,1}(\HS)} \leq C\|\psi_R f\|_{L_q(\HS)}^{1-\frac{N}{mq}}\|\psi_R f
\|_{W^m_q(\HS)}^{\frac{N}{mq}}$ with some constant $C$ independent of $R$ and $f$, we have
$$\|\psi_R f\|_{B^{N/q}_{q,1}(\HS)} \leq C\delta.$$
If we choose $R \geq 1$ larger, we have
$$\|\psi_R f\|_{B^{N/q}_{q,1}(\HS)} \leq \delta/2.$$
Now, in the case where $f \in B^{N/q}_{q,1}(\HS)$, we choose $g \in W^m_{q,1}(\HS)$ such that 
$$\|\psi_R(g-f) \|_{B^{N/q}_{q,1}(\HS)} < C\|g-f\|_{B^{N/q}_{q,1}(\HS)} <\delta/2.
$$
Here, $C$ is a constant indepenent of $R$.  Thus, choosing $R>0$ in such a way that 
$\|\psi_Rg\|_{B^{N/q}_{q,1}(\HS)} < \delta/2$,  we have
$$\|\psi_R f\|_{B^{N/q}_{q,1}(\HS)} \leq \|\psi_R(f-g)\|_{B^{N/q}_{q,1}(\HS)} 
+ \|\psi_Rg\|_{B^{N/q}_{q,1}(\HS)}
< \delta.$$
This completes the proof of Lemma \ref{Prop.B2}.

\end{proof}

Combining \eqref{fundest:2} and \eqref{small:2} implies 
\begin{equation}\label{small:2.3}
\|\bS_R(\lambda)\bg\|_{B^\nu_{q,1}} \leq C\delta\|\bg\|_{B^\nu_{q,1}}.
\end{equation}
Choosing $\delta>0$ in such a way that  $C\delta \leq 1/2$, we have 
$\|\bS_R(\lambda)\|_{\CL(B^\nu_{q,1})} \leq 1/2$, and so 
the inverse operator $(\bI+\bS_R(\lambda))^{-1}$ exists
for every $\lambda \in \Sigma_\mu + \gamma$.  
Thus, by \eqref{s:4*} and \eqref{est:2.2}, 
$\bw_R = \bT_R(\lambda)(\bI + \bS_R(\lambda))^{-1}\bg 
\in B^\nu_{q,1}$ satisfies equations
\begin{equation}\label{s:5*}
A_R\lambda\bw_R- \alpha\Delta \bw_R  -\beta\nabla\dv\bw_R 
 = \bg  \quad \text{in $\HS$}, \quad 
\bw_R|_{\pd\HS} =0, 
\end{equation}
as well as the estimate: 
\begin{equation}\label{est:2.3}
\|(\lambda, \lambda^{1/2}\bar\nabla, \bar\nabla^2)\bw_R\|_{B^\nu_{q,1}}
\leq C\|(\bI+\bS_R)^{-1}\bg)\|_{B^\nu_{q,1}}
\leq C\|\bg\|_{B^\nu_{q,1}}.
\end{equation}

Let $\tilde\varphi \in C^\infty(\HS)$ such that $\tilde\varphi(x) =1$
for $|x| \geq 3$ and $0$ for $|x| \leq 2$ and set $\varphi_R = \tilde\varphi(x/R)$. 
We have $\psi_R \varphi_R = \varphi_R$, and so setting
$\bv_R = \varphi_R\bw_R \in B^\nu_{q,1}(\HS)$, we see that $A_R\varphi_R\lambda\bv_R
 = \eta_0(x)\lambda\bv_R$.  Thus, 
by \eqref{s:5*} and \eqref{est:2.3}, we see that 
$\bv_R$ satisfies  the equations: 
\begin{equation}\label{s:6*}
\eta_0(x)\lambda\bv_R- \alpha\Delta \bv_R  -\beta\nabla\dv\bv_R 
 = \varphi_R\bg + \bU_{R}(\lambda)\bg
\quad\text{in $\HS$}, \quad 
\bv_R|_{\pd\HS} =0,
\end{equation}
as well as the estimate: 
\begin{equation}\label{est:2.4}
\|(\lambda, \lambda^{1/2}\bar\nabla, \bar\nabla^2)\bv_R\|_{B^\nu_{q,1}}
\leq C\|\bg\|_{B^\nu_{q,1}}
\end{equation}
for any $\lambda \in \Sigma_\mu + \gamma$.  Here,  we have set
\begin{align*}
\bU_{R}(\lambda)\bg & = -\alpha((\Delta\varphi_R)\bw_R
+2 (\nabla\varphi_R)\nabla\bw_R) - \beta(\nabla((\nabla\varphi_R)\cdot\bw_R)
+ (\nabla\varphi_R)\dv\bw_R).
\end{align*}
By \eqref{est:2.3}, we have 
\begin{equation}\label{remainder:3}
\|\bU_{R}(\lambda)\bg
\|_{B^{\nu}_{q,1}} \leq C|\lambda|^{-1/2}\|\bg\|_{B^\nu_{q,1}}.
\end{equation}
\par
Let $x^0_j \in \pd\HS$ ($j=1, \ldots, L_0$), and $x^1_j \in 
\HS$ ($j=1, \ldots, L_1$) be points such that 
$$\overline{\HS} \subset B^c_R \cup \bigcup_{j=1}^{L_0}
B_d(x^0_j) \cup \bigcup_{j=1}^{L_1} B_{d_1}(x^1_j).
$$ 
where $d> d_1 > 0$ are suitably chosen. 
Let $\psi^0_0(x) = \psi_R(x)$, $\psi^0_j(x) = \varphi((x-x^0_j)/d)$, and
 $\psi^1_j(x) = \varphi((x-x^1_j)/d_1)$, and set
$$\Psi(x) = \psi^0_0(x) + \sum_{i=0}^1\sum_{j=1}^{L_i} \psi^i_j(x).$$
We see that $\Psi(x) \geq 1$ for every $x \in \overline{\HS}$ and $\Psi \in C^\infty(\overline{\HS})$.
 Set 
$$\varphi^0_0(x) = \psi^0_0(x)/\Psi(x), \quad
\varphi^i_j(x) = \psi^i_j(x)/\Psi(x).$$
Obviously, $\varphi^0_j \in C^\infty_0(B_{2d}(x^0_j))$, 
$\varphi^1_j \in C^\infty_0(B_{2d_1}(x^1_j))$,  
$\varphi^0_0(x) = 0$ for 
$|x| \leq 2R$, and 
$$\varphi^0_0(x) + \sum_{i=0}^1\sum_{j=1}^{L_i}
\varphi^i_j(x) = 1\quad\text{for $x \in \overline{\HS}$}.
$$
Let $\bv^i_j = \bv_{x^i_j} = \varphi^i_j\bw_{x^i_j}$, 
and $\bv^0_0 = \bv_R = \varphi^0_0\bw_R$. 
Set $\bv = \bv^0_0 + \sum_{i=0}^1\sum_{j=1}^{L_i}\bv^i_j$, and then 
\begin{equation}\label{eq:t.1}
\eta_0(x)\lambda\bv -\alpha\Delta\bv-\beta\nabla\dv\bv
  = \bg
+ \bU(\lambda)\bg\quad\text{in $\HS$}, \quad 
\bv|_{\pd\HS}=0.
\end{equation}
Here, we have set 
\begin{align*}
\bU(\lambda)\bg &= -\alpha((\Delta\varphi^0_0)\bw_R +2 (\nabla\varphi^0_0)\nabla
\bw_R) -\beta(\nabla((\nabla\varphi^0_0)\cdot\bw_R) + (\nabla\varphi^0_0)\dv\bw_R )\\
& -\sum_{i=0}^1\sum_{j=1}^{L_i}\{\alpha((\Delta\nabla\varphi^i_j)\bw_{x^i_j} + 
2(\nabla\varphi^i_j)\nabla\bw_{x^i_j}) 
+\beta(\nabla((\nabla\varphi^i_j)\cdot\bw_{x^i_j})+(\nabla\varphi^i_j)\dv\bw_{x^i_j}  )\}.
\end{align*}
By \eqref{pert:2}, \eqref{pert:3}, and \eqref{est:2.4}, we have
\begin{equation}\label{main:est.1}
\|(\lambda, \lambda^{1/2}\bar\nabla, \bar\nabla^2)\bv\|_{B^\nu_{q,1}}
\leq C\|\bg\|_{B^{\nu}_{q,1}}.
\end{equation}
By \eqref{remainder:1}, \eqref{remainder:2}, and \eqref{remainder:3}, 
we have
\begin{equation}\label{remain:1}
\|\bU(\lambda)\bg\|_{B^\nu_{q,1}}
\leq C|\lambda|^{-1/2}
\|\bg\|_{B^\nu_{q,1}}
\end{equation}
for any $\lambda \in \Sigma_\mu + \gamma$. 
Choosing $\gamma>0$ so large that $C\gamma^{-1/2} \leq 1/2$, we see that 
for any $\lambda \in \Sigma_\mu + \gamma$ 
 $(\bI+\bU(\lambda))^{-1}$ exists and 
$\|(\bI+\bU(\lambda))^{-1}\|_{\CL(B^\nu_{q,1})} \leq 2$. 
If we define an operator $\bT(\lambda)$ by
$\bT(\lambda)\bg = \bv$,  by \eqref{eq:t.1}
 $\bv = \bT(\lambda)(\bI+\bU(\lambda))^{-1}\bg$ satisfies
equations:
\begin{equation}\label{eq:t.2}
\eta_0(x)\lambda\bv -\alpha\Delta\bv-\beta\nabla\dv\bv
  = \bg\quad\text{in $\HS$}, \quad 
\bv|_{\pd\HS}=0.
\end{equation}
Moreover, by \eqref{main:est.1}, we have 
$$\|(\lambda, \lambda^{1/2}\bar\nabla, \bar\nabla^2)\bT(\lambda)(\bI+\bU(\lambda))^{-1}\bg\|_{B^\nu_{q,1}}
\leq C\|(\bI+\bU(\lambda))^{-1}\bg\|_{B^\nu_{q,1}}
\leq 2C\|\bg\|_{B^\nu_{q,1}}
$$
for any $\lambda \in \Sigma_\mu + \gamma$. 
 This completes the proof of 
Theorem \ref{thm:4.0}.  \qed
\vskip0.5pc
We now consider the perturbed Lam\'e equations, which read 
\begin{equation}\label{lame:1}
\eta^\epsilon_0(x)\lambda \bv - \alpha\Delta\bv -\beta\nabla\dv\bv =
\bg \quad \text{in $\HS$},  \quad \bv|_{\pd\HS} =0.
\end{equation}
Here, $\eta^\epsilon_0$ is an approximation of $\eta_0$ given in
\eqref{appro:1.1} and \eqref{appro:1.2}. For equations \eqref{lame:1}
using Theorems \ref{thm:4.0}  and \ref{thm:kuo}, and some perturbation arguments based on \eqref{appro:1.1} and
\eqref{appro:1.2}, we shall prove the following theorem 
\begin{thm}\label{thm:4}
Let $1 < q < \infty$ and $-1 + 1/q < s < 1/q$.  Let $\sigma > 0$ be a small number such that 
$-1+1/q < s-\sigma < s < s+\sigma < 1/q$. Assume that $s$ satisfies \eqref{assump:s}
and $\sigma$ \eqref{assump:sigma}. Let $\eta_0 = \gamma_* + \tilde\eta_0(x)$ with 
$\tilde\eta_0\in B^{s+1}_{q,1}(\HS)$ and 
let $\tilde\eta^\epsilon_0(x)$ be a 
function satisfying assumptions \eqref{appro:1.1} and \eqref{appro:1.2}. 
Set $\eta^\epsilon_0
= \gamma_* + \tilde\eta^\epsilon_0$. 
Then, there exist constants 
$\gamma > 0$ and $C>0$ depending on $\|\tilde\eta_0\|_{B^{N/q}_{q,1}(\HS)}$
such that for any $\lambda \in \Sigma_\mu+ \gamma$
 and $\bg \in B^s_{q,1}(\HS)$, 
problem \eqref{lame:1} admits a unique solution
 $\bv \in B^{s +2}_{q,1}(\HS)^N$
satisfying  the estimate:
\begin{equation}\label{est:1.1}
\|(\lambda, \lambda^{1/2}\bar\nabla, \bar\nabla^2)
\bv\|_{B^{s}_{q,1}(\HS)} 
\leq C\|\bg\|_{B^s_{q,1}(\HS)}
\end{equation}
for some constant $C > 0$. \par
Moreover,  there exist
constants $\gamma>0$ and $C>0$ depending on $\|\tilde\eta_0\|_{B^{N/q}_{q,1}(\HS)}$
 such that 
for any $\lambda \in \Sigma_\mu+\gamma$ 
there holds 
\begin{equation}
\|(\lambda, \lambda^{1/2}\bar\nabla, \bar\nabla^2)\bv\|_{B^s_{q,1}(\HS)}
\leq C|\lambda|^{-\frac{\sigma}{2}}\|\bg\|_{B^{s+\sigma}_{q,1}(\HS)},
\label{fundest.2*} \end{equation}
provided  $\bg \in B^{s+\sigma}_{q,1}(\HS)
 \cap B^s_{q,1}(\HS)$ as well as 
\begin{equation}
\|(\lambda, \lambda^{1/2}\bar\nabla, \bar\nabla^2)\pd_\lambda\bv\|_{B^s_{q,1}(\HS)}
\leq C|\lambda|^{-(1-\frac{\sigma}{2})} \|\bg\|_{B^{s-\sigma}_{q,1}(\HS)}.
\label{fundest.3*}
\end{equation}
provided $\bg \in B^{s-\sigma}_{q,1}(\HS) \cap B^s_{q,1}(\HS)$.
\end{thm}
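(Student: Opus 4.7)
The approach is to treat \eqref{lame:1} as a small perturbation of the unregularized Lam\'e problem \eqref{lame:2}, which is already solved by Theorem \ref{thm:4.0}. The key structural observation is that $s+1 \geq N/q$ holds in both alternatives of \eqref{assump:s}, so the embedding $B^{s+1}_{q,1}(\HS) \hookrightarrow B^{N/q}_{q,1}(\HS)$ is valid; combined with \eqref{appro:1.1}, it ensures that $\|\tilde\eta_0^\epsilon - \tilde\eta_0\|_{B^{N/q}_{q,1}}$ can be made arbitrarily small by taking $\epsilon$ small. Denote by $\CS_0(\lambda)$ the solution operator for \eqref{lame:2} provided by Theorem \ref{thm:4.0} at the levels $\nu = s, s\pm\sigma$, with constants depending only on $\|\tilde\eta_0\|_{B^{N/q}_{q,1}}$.

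First I would recast \eqref{lame:1} as the fixed-point equation $(\bI + T_\epsilon(\lambda))\bv = \CS_0(\lambda)\bg$, where
\[
T_\epsilon(\lambda)\bv := \CS_0(\lambda)\bigl((\tilde\eta_0^\epsilon - \tilde\eta_0)\lambda\bv\bigr).
\]
Combining Lemma \ref{lem:APH} (to control the product $(\tilde\eta_0^\epsilon - \tilde\eta_0)\lambda\bv$ in $B^\nu_{q,1}$) with the resolvent bound on $\CS_0(\lambda)$ from Theorem \ref{thm:4.0} yields, at each of the three levels $\nu$,
\[
\|\lambda T_\epsilon(\lambda)\bv\|_{B^\nu_{q,1}} \leq C\|\tilde\eta_0^\epsilon - \tilde\eta_0\|_{B^{N/q}_{q,1}}\|\lambda\bv\|_{B^\nu_{q,1}},
\]
with $C$ depending only on $\|\tilde\eta_0\|_{B^{N/q}_{q,1}}$. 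Taking $\epsilon$ so small that $C\|\tilde\eta_0^\epsilon - \tilde\eta_0\|_{B^{N/q}_{q,1}} \leq 1/2$ uniformly in $\lambda \in \Sigma_\mu + \gamma$, the Neumann series inverts $\bI + T_\epsilon(\lambda)$ and produces the unique solution $\bv = (\bI + T_\epsilon(\lambda))^{-1}\CS_0(\lambda)\bg$. The bound \eqref{est:1.1} is inherited directly from the corresponding estimate on $\CS_0(\lambda)$, and \eqref{fundest.2*} follows analogously by running the same argument using the $|\lambda|^{-\sigma/2}$-decay version of the bound on $\CS_0(\lambda)$ --- which for \eqref{lame:2} is obtained by applying the localization argument of Theorem \ref{thm:4.0} with the Kuo \cite{Kuo23} estimate \eqref{fundest.2} at the local constant-coefficient level.

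For \eqref{fundest.3*}, I would differentiate \eqref{lame:1} in $\lambda$: $\pd_\lambda\bv$ then solves the same system with right-hand side $-\eta_0^\epsilon\bv$. Applying the already-established \eqref{est:1.1} to $\pd_\lambda\bv$, and using Lemma \ref{lem:APH} together with the splitting $\eta_0^\epsilon = \gamma_* + \tilde\eta_0^\epsilon$, gives
\[
\|(\lambda, \lambda^{1/2}\bar\nabla, \bar\nabla^2)\pd_\lambda\bv\|_{B^s_{q,1}} \leq C\|\bv\|_{B^s_{q,1}}.
\]
Since $T_\epsilon(\lambda)$ is also a contraction on $B^s_{q,1}$ itself (using $\|\CS_0(\lambda)\bh\|_{B^s_{q,1}} \leq C|\lambda|^{-1}\|\bh\|_{B^s_{q,1}}$ from \eqref{est:1.1}), the representation $\bv = (\bI + T_\epsilon(\lambda))^{-1}\CS_0(\lambda)\bg$ reduces the task to proving $\|\CS_0(\lambda)\bg\|_{B^s_{q,1}} \leq C|\lambda|^{-(1-\sigma/2)}\|\bg\|_{B^{s-\sigma}_{q,1}}$, i.e. the analog of \eqref{fundest:3*} for \eqref{lame:2}. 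This is obtained exactly as in the Remark following Theorem \ref{thm:kuo}: from the surjectivity of $\CS_0(\lambda)$ onto $D^{s+2}_{q,1}$ and the $\pd_\lambda$-estimate \eqref{fundest.3} at level $\nu = s$ for the variable-coefficient problem \eqref{lame:2}, the identity $\bv = -(\eta_0)^{-1}\CS_0(\lambda)^{-1}\pd_\lambda\bv$ (on the $\CS_0$-side) yields the required decay.

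The main delicate point I anticipate is the propagation of the $|\lambda|^{-\sigma/2}$- and $|\lambda|^{-(1-\sigma/2)}$-decay estimates from the constant-coefficient Theorem \ref{thm:kuo} through Theorem \ref{thm:4.0}'s localization/partition-of-unity argument to the variable-coefficient problem \eqref{lame:2}, with constants depending only on $\|\tilde\eta_0\|_{B^{N/q}_{q,1}}$. Once these decay estimates are available for $\CS_0(\lambda)$, the Neumann perturbation argument transfers them cleanly to the regularized problem \eqref{lame:1}, uniformly in $\lambda \in \Sigma_\mu + \gamma$ and independently of the regularization parameter $\epsilon$.
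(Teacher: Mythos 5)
Your treatment of \eqref{est:1.1} is correct and is essentially the paper's argument: view \eqref{lame:1} as a perturbation of \eqref{lame:2} with perturbation $(\eta_0^\epsilon-\eta_0)\lambda\bv$, make $\|\tilde\eta_0^\epsilon-\tilde\eta_0\|_{B^{N/q}_{q,1}}$ small using \eqref{appro:1.1} and the embedding $B^{s+1}_{q,1}\hookrightarrow B^{N/q}_{q,1}$ (valid because $s+1\geq N/q$ under \eqref{assump:s}), and invert by Neumann series.

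For \eqref{fundest.2*} and \eqref{fundest.3*}, however, there is a genuine gap. Your step 3 for \eqref{fundest.3*} reduces the estimate to the decay bound $\|\CS_0(\lambda)\bg\|_{B^s_{q,1}}\leq C|\lambda|^{-(1-\sigma/2)}\|\bg\|_{B^{s-\sigma}_{q,1}}$ for the variable-coefficient solution operator of \eqref{lame:2}, and your argument for it invokes ``the $\pd_\lambda$-estimate \eqref{fundest.3} at level $\nu=s$ for the variable-coefficient problem \eqref{lame:2}.'' But Theorem \ref{thm:4.0} only establishes the resolvent estimate \eqref{est:1.1} at the three levels $\nu=s,\,s\pm\sigma$; it does not furnish the $|\lambda|^{-\sigma/2}$ or $|\lambda|^{-(1-\sigma/2)}$ decay bounds for the variable-coefficient operator $\CS_0(\lambda)$. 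Those decay bounds live only at the constant-coefficient level (Theorem \ref{thm:kuo}). You acknowledge this yourself in the last paragraph (``the main delicate point I anticipate is the propagation \dots through Theorem \ref{thm:4.0}'s localization argument''), but that propagation is nontrivial --- the commutator terms $\bU_{x_0}(\lambda)\bg$, $\bU_{x_1}(\lambda)\bg$, $\bU_R(\lambda)\bg$ produced by the cutoffs are of lower order and their $\sigma$-weighted decay is not automatic --- and you never carry it out. As written, the proof of \eqref{fundest.2*}--\eqref{fundest.3*} is therefore incomplete.

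The paper sidesteps this entirely with a simpler and more direct reformulation. Rather than perturbing from the variable-coefficient problem \eqref{lame:2}, it moves \emph{all} of $\tilde\eta^\epsilon_0\lambda\bv$ (not only the small $\epsilon$-difference) to the right-hand side, writing a solution $\bv$ of \eqref{lame:1} as a solution of the \emph{constant}-coefficient problem
\[
\gamma_*\lambda\bv-\alpha\Delta\bv-\beta\nabla\dv\bv=\bg-\tilde\eta^\epsilon_0\lambda\bv,\qquad \bv|_{\pd\HS}=0.
\]
This puts one directly in the setting of Theorem \ref{thm:kuo}, so \eqref{fundest.2} and \eqref{fundest.3} apply verbatim with data $\bg-\tilde\eta^\epsilon_0\lambda\bv$. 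The resulting forcing term $\|\tilde\eta^\epsilon_0\lambda\bv\|_{B^{s\pm\sigma}_{q,1}}$ is then closed by Lemma \ref{lem:APH} together with the already-established resolvent estimate \eqref{est:1.1} (applied at the shifted levels $\nu=s\pm\sigma$, which is exactly why Theorem \ref{thm:4.0} and your Neumann argument had to be run at those levels too). No decay estimate for $\CS_0(\lambda)$ is ever needed, nor any re-running of the localization. I would recommend replacing your steps for \eqref{fundest.2*}--\eqref{fundest.3*} with this reformulation.
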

\begin{remark} Since $B^{s+\sigma}_{q,1}(\HS) \subset B^s_{q,1}(\HS) \subset B^{s-\sigma}_{q,1}(\HS)$, 
to obtain \eqref{fundest.2*} and \eqref{fundest.3*} it suffices to assume that 
$\bg \in B^{s+\sigma}_{q,1}(\HS)$.
\end{remark}
\begin{proof}
Let $\nu = s$ or $s\pm\sigma$.  Note that $\tilde\eta_0 \in B^{s+1}_{q,1} \subset B^{N/q}_{q,1}$. 
For $\bg \in B^\nu_{q,1}$, let $\bv \in B^{\nu+2}_{q,1}$ be a unique 
solution of equations \eqref{lame:2}. By Theorem \ref{thm:4.0}, 
we know that $\bv$ satisfies the estimate:
\begin{equation}\label{july.21.1}
\|(\lambda, \lambda^{1/2}\bar\nabla, \bar\nabla^2)\bv\|_{B^\nu_{q,1}}
\leq C\|\bg\|_{B^\nu_{q,1}}.
\end{equation}
 Inserting $\bv$ into \eqref{lame:1}, we have
$$\eta^\epsilon_0(x)\lambda\bv - \alpha\Delta \bv- \beta\nabla\dv\bv = \bg + 
(\eta^\epsilon_0(x) - \eta_0(x))\lambda\bv, \quad\text{in $\BR^N_+$}, \quad \bv|_{\pd\HS} = 0.
$$
By Lemma \ref{lem:APH} and \eqref{july.21.1} we have 
\begin{align*}
\|(\eta^\epsilon_0- \eta_0)\lambda\bv\|_{B^\nu_{q,1}} &\leq C\|\tilde\eta^\epsilon_0-\tilde\eta_0\|_{B^{N/q}_{q,1}}
\|\lambda\bv\|_{B^\nu_{q,1}} \\
&\leq C\|\tilde\eta^\epsilon_0-\tilde\eta_0\|_{B^{s+1}_{q,1}}\|\bg\|_{B^\nu_{q,1}}.
\end{align*}
We choose $\epsilon>0$ so small that $C\|\tilde\eta^\epsilon_0-\tilde\eta_0\|_{B^{s+1}_{q,1}} \leq 1/2$.
If we define an operator $T_\nu$ by $\bv = T_\nu \bg$,  then 
$\|(\eta^\epsilon_0- \eta_0)\lambda T_\nu \bg\|_{B^\nu_{q,1}} \leq (1/2)\|\bg\|_{B^\nu_{q,1}}.$ Thus, 
the inverse map: $(\bI + (\eta^\epsilon_0- \eta_0)\lambda T_\nu)^{-1}$ exists as an operator of 
$\CL(B^\nu_{q,1})$ and $\|(\bI + (\eta^\epsilon_0- \eta_0)\lambda T_\nu)^{-1}\|_{\CL(B^\nu_{q,1})} \leq 2.$
Thus, defining an operator $U_\nu$ by $U_\nu= T_\nu(\bI + (\eta^\epsilon_0- \eta_0)\lambda T_\nu)^{-1}$,
we see that for any $\bg \in B^\nu_{q,1}$, $\bv = U_\nu\bg$ satisfies equations \eqref{lame:1}
and estimate:
\begin{equation}\label{july:21.2}
\|(\lambda, \lambda^{1/2}\bar\nabla, \bar\nabla^2)\bv\|_{B^\nu_{q,1}}
\leq C_\nu \|\bg\|_{B^\nu_{q,1}}
\end{equation}
for some constant $C_\nu$ depending on $\nu = s$ or $s\pm\sigma$.
This completes the proof of \eqref{est:1.1}. \par 

Let $\bv$ be a solution of equations \eqref{lame:1} for $\bg \in B^{s\pm\sigma}_{q,1} \cap B^s_{q,1}$.  We consider $\bv$ 
as a solution of  equations
$$\gamma_*\lambda\bv - \alpha\Delta\bv - \beta\nabla\dv\bv=
\bg - \tilde\eta^\epsilon_0(x)\lambda\bv \quad
\text{in $\HS$}, \quad \bv|_{\pd\HS}=0,
$$
where we have used the relation $\eta^\epsilon_0 = \gamma_* + \tilde\eta^\epsilon_0$. 
Applying  \eqref{fundest.2} and \eqref{fundest.3} of Theorem \ref{thm:kuo}, 
we have
\begin{align*}
\|(\lambda, \lambda^{1/2}\bar\nabla, \bar\nabla^2)\bv\|_{B^s_{q,1}}
&\leq C
|\lambda|^{-\frac{\sigma}{2}}(\|\bg\|_{B^{s+\sigma}_{q,1}}
+\|\tilde\eta^\epsilon_0 \lambda\bv\|_{B^{s+\sigma}_{q,1}}), \\
\|(\lambda, \lambda^{1/2}\bar\nabla, \bar\nabla^2)\pd_\lambda \bv\|_{B^s_{q,1}}
&\leq C
|\lambda|^{-(1-\frac{\sigma}{2})}(\|\bg\|_{B^{s-\sigma}_{q,1}}
+\|\tilde\eta^\epsilon_0 \lambda\bv\|_{B^{s-\sigma}_{q,1}}).
\end{align*}
Since 
$$\|\tilde\eta^\epsilon_0 \lambda\bv\|_{B^{s\pm\sigma}_{q,1}}
\leq C\|\tilde\eta^\epsilon_0\|_{B^{N/q}_{q,1}}\|\lambda\bv\|_{B^{s\pm\sigma}_{q,1}}
\leq  C\|\tilde\eta^\epsilon_0\|_{B^{N/q}_{q,1}}\|\lambda\bv\|_{B^{s\pm\sigma}_{q,1}},
$$
as follows from Lemme \ref{lem:APH}, by \eqref{july:21.2} and $\tilde\eta_0 \in B^{s+1}_{q,1} \subset B^{N/q}_{q,1}$,
we have
\begin{align*}
\|(\lambda, \lambda^{1/2}\bar\nabla, \bar\nabla^2)\bv\|_{B^s_{q,1}}
&\leq C(1+\|\tilde\eta_0\|_{B^{N/q}_{q,1}})|\lambda|^{-\frac{\sigma}{2}}
\|\bg\|_{B^{s+\sigma}_{q,1}}, \\
\|(\lambda, \lambda^{1/2}\bar\nabla, \bar\nabla^2)\pd_\lambda \bv\|_{B^s_{q,1}}
&\leq C(1+\|\tilde\eta_0\|_{B^{N/q}_{q,1}})|\lambda|^{-(1-\frac{\sigma}{2})}
\|\bg\|_{B^{s-\sigma}_{q,1}},
\end{align*}
which shows  \eqref{fundest.2*} and \eqref{fundest.3*}.
This completes the proof of Theorem \ref{thm:4}.
\end{proof}

Now, we consider  problem \eqref{s:2} of the Stokes system
and prove Theorem \ref{thm:3}.  
 We insert the relation: 
 $\rho = \lambda^{-1}(f-\eta^\epsilon_0\dv\bv)$ obtained 
from the first equation in \eqref{s:2} into the second equations.  Then, we have
\begin{equation}\label{SL:1}
\eta_0^\epsilon(x)\lambda\bv - \alpha\Delta \bv - \beta\nabla\dv\bv -
\lambda^{-1}\nabla(P'(\eta^\epsilon_0)\eta^\epsilon_0\dv\bv) = \bh
\quad\text{in $\HS$}, \quad \bu|_{\pd\HS} =0,
\end{equation}
where we have set $\bh = \bg - \lambda^{-1}\nabla(P'(\eta^\epsilon_0)f)$ for notational
simplicity. 
We shall show the following lemma.
\begin{lem}\label{lem:15} Let $1 < q < \infty$, $\sigma>0$ and  
$-1+1/q < s-\sigma < s < s+\sigma < 1/q$.
Assume that
$s$ satisfies \eqref{assump:s} and $\sigma$ satisfies
 \eqref{assump:sigma}.  Let $\nu=s$ or $s\pm \sigma$.  Assume that 
$\tilde\eta_0 \in B^{N/q}_{q,1}(\HS)$.
 Then, there exist constants $\gamma$ and $C$ such that 
for any $\lambda \in \Sigma_\mu + \gamma$ and $\bh \in B^\nu_{q,1}(\HS)$, 
problem \eqref{SL:1} admits a unique solution $\bv \in B^{\nu +2}_{q,1}(\HS)$ possessing the 
estimate:
\begin{equation}\label{23.6.20.7}
\|(\lambda, \lambda^{1/2}\bar\nabla, \bar\nabla^2)\bv\|_{B^\nu_{q,1}}
\leq C\|\bh\|_{B^\nu_{q,1}}
\end{equation}
for any $\lambda \in \Sigma_\mu + \gamma$. \par
Here, $\gamma$ depends on $\gamma_*$, $\|\tilde\eta_0\|_{B^{N/q}_{q,1}(\HS)}$,
and $\|\nabla\tilde\eta_0^\epsilon\|_{B^{N/q}_{q,1}(\HS)}$, and $C$ depends on 
$\gamma_*$ and $\|\tilde\eta_0\|_{B^{N/q}_{q,1}(\HS)}$.
\end{lem}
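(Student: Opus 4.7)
The strategy is to treat \eqref{SL:1} as a lower-order perturbation of the Lam\'e problem \eqref{lame:1}, the extra term $\lambda^{-1}\nabla(P'(\eta^\epsilon_0)\eta^\epsilon_0\dv\bv)$ being small for $|\lambda|$ large thanks to the prefactor $\lambda^{-1}$. Set $g_\epsilon = P'(\eta^\epsilon_0)\eta^\epsilon_0$. Since $\tilde\eta_0^\epsilon \in B^{N/q}_{q,1}(\HS)$ and $P'$ is smooth on $(0,\infty)$ with $\eta^\epsilon_0$ bounded away from $0$ and $\infty$, standard composition and algebra properties of $B^{N/q}_{q,1}$ give $\|g_\epsilon\|_{B^{N/q}_{q,1}} \leq C$ with $C$ controlled by $\gamma_*$ and $\|\tilde\eta_0\|_{B^{N/q}_{q,1}}$, while \eqref{appro:1.2} gives $\|\nabla g_\epsilon\|_{B^{N/q}_{q,1}} \leq C_\epsilon\|\tilde\eta_0\|_{B^{N/q}_{q,1}}$.

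Let $U_\nu(\lambda)$ denote the solution operator for the perturbed Lam\'e system \eqref{lame:1} provided by Theorem \ref{thm:4}, so that $\bv = U_\nu(\lambda)\bF$ solves the Lam\'e equation with datum $\bF$ and satisfies
\begin{equation*}
\|(\lambda,\lambda^{1/2}\bar\nabla,\bar\nabla^2)U_\nu(\lambda)\bF\|_{B^\nu_{q,1}} \leq C\|\bF\|_{B^\nu_{q,1}}.
\end{equation*}
A solution $\bv$ of \eqref{SL:1} satisfies
\begin{equation*}
\bv = U_\nu(\lambda)\bh + M(\lambda)\bv, \qquad M(\lambda)\bv := U_\nu(\lambda)\bigl(\lambda^{-1}\nabla(g_\epsilon\dv\bv)\bigr),
\end{equation*}
so the task reduces to inverting $\bI - M(\lambda)$ on the weighted space $\CY_\nu(\lambda)$ equipped with the norm $\|\bv\|_{\CY_\nu(\lambda)} := \|(\lambda,\lambda^{1/2}\bar\nabla,\bar\nabla^2)\bv\|_{B^\nu_{q,1}}$.

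To bound $M(\lambda)$, I split via Leibniz:
\begin{equation*}
\lambda^{-1}\nabla(g_\epsilon\dv\bv) = \lambda^{-1}(\nabla g_\epsilon)\,\dv\bv + \lambda^{-1}g_\epsilon\,\nabla\dv\bv.
\end{equation*}
Applying Lemma \ref{lem:APH} to each product and using $\|\dv\bv\|_{B^\nu_{q,1}}\leq C|\lambda|^{-1/2}\|\bv\|_{\CY_\nu(\lambda)}$ and $\|\nabla\dv\bv\|_{B^\nu_{q,1}}\leq \|\bv\|_{\CY_\nu(\lambda)}$ yields
\begin{equation*}
\|\lambda^{-1}\nabla(g_\epsilon\dv\bv)\|_{B^\nu_{q,1}} \leq C\bigl(C_\epsilon|\lambda|^{-3/2} + |\lambda|^{-1}\bigr)\|\bv\|_{\CY_\nu(\lambda)},
\end{equation*}
and composing with $U_\nu(\lambda)$ gives the same bound for $\|M(\lambda)\bv\|_{\CY_\nu(\lambda)}$. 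Choosing $\gamma$ large enough (in a way that depends on $\gamma_*$, $\|\tilde\eta_0\|_{B^{N/q}_{q,1}}$ and $\|\nabla\tilde\eta^\epsilon_0\|_{B^{N/q}_{q,1}}$, exactly as allowed by the statement) makes $\|M(\lambda)\|_{\CL(\CY_\nu(\lambda))}\leq 1/2$ for every $\lambda \in \Sigma_\mu+\gamma$, so $\bI - M(\lambda)$ is invertible by Neumann series and the unique solution $\bv = (\bI - M(\lambda))^{-1}U_\nu(\lambda)\bh$ satisfies \eqref{23.6.20.7}. Uniqueness follows because any two solutions have difference annihilated by $\bI-M(\lambda)$.

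The step I expect to be least routine is verifying that Lemma \ref{lem:APH} applies to $\nabla g_\epsilon \cdot \dv\bv$ for the full range $\nu = s, s\pm\sigma$ prescribed by \eqref{assump:s}--\eqref{assump:sigma}; once the indices are checked the perturbation argument is essentially mechanical. The bookkeeping for the constants, in particular keeping $\gamma$'s dependence consistent with the statement (so that it may depend on $\|\nabla\tilde\eta^\epsilon_0\|_{B^{N/q}_{q,1}}$ whereas $C$ does not), will just amount to keeping the factor $C_\epsilon$ inside the smallness condition defining $\gamma$ and outside of the final estimate on $\bv$.
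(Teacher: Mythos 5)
Your argument follows the paper's proof closely: both treat the pressure term $\lambda^{-1}\nabla(P'(\eta^\epsilon_0)\eta^\epsilon_0\dv\bv)$ as a lower-order perturbation of the perturbed Lam\'e resolvent from Theorem \ref{thm:4}, split it by Leibniz's rule into $(\nabla g_\epsilon)\dv\bv$ and $g_\epsilon\nabla\dv\bv$, estimate the products via Lemma \ref{lem:APH}, and invert the resulting small perturbation for $|\lambda|$ large by a fixed-point/Neumann-series argument (the paper phrases it as ``successive approximation''), while correctly noting that $\gamma$ may absorb $\|\nabla\tilde\eta^\epsilon_0\|_{B^{N/q}_{q,1}}$ but $C$ should not. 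The one imprecision is the claim $\|g_\epsilon\|_{B^{N/q}_{q,1}}\leq C$: a nonzero constant is not in $B^{N/q}_{q,1}(\HS)$ on the unbounded half-space, so, as the paper does, you must first split $P'(\eta^\epsilon_0)\eta^\epsilon_0 = P'(\gamma_*)\gamma_* + Q_2(\tilde\eta^\epsilon_0)$ with $Q_2(0)=0$ (and likewise for $P''(\eta^\epsilon_0)\eta^\epsilon_0 + P'(\eta^\epsilon_0)$), apply Lemma \ref{lem:Hasp} only to the decaying part $Q_2(\tilde\eta^\epsilon_0)$, and treat the constant factor trivially. With that adjustment the proposal matches the paper's proof.
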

\begin{proof}
We shall solve equations \eqref{SL:1} by successive approximation for large $\lambda$. 
By Lemma \ref{lem:APH}, we have
$$
\|\nabla(P'(\eta_0^\epsilon)\eta_0^\epsilon\dv\bv)\|_{B^\nu_{q,1}}
\leq C(\|(P''(\eta_0^\epsilon)\eta_0^\epsilon + P'(\eta_0^\epsilon))
(\nabla\eta^\epsilon_0)\dv\bv\|_{B^\nu_{q,1}}
+ \|P'(\eta_0^\epsilon)\eta_0^\epsilon \nabla\dv\bv\|_{B^\nu_{q,1}}).
$$
We now use the following lemma for the Besov norm estimate of composite functions cf. 
\cite[Proposition 2.4]{H11} and \cite[Theorem 2.87]{BCD}.
\begin{lem}\label{lem:Hasp} Let $1 < q < \infty$. 
Let $I$ be an open interval of $\BR$.  Let $\omega>0$ and let $\tilde\omega$ be the smallest
integer such that $\tilde\omega \geq \omega$. Let $F:I \to \BR$ satisfy $F(0) = 0$ and 
$F' \in BC^{\tilde\omega}_\infty(I, \BR)$.  Assume that $v\in B^\omega_{q,r}$
has valued in $J \subset\subset I$.  Then, 
$F(v) \in B^\omega_{q,1}$ and there exists a constant $C$ depending only on 
$\nu$, $I$, $J$, and $N$, such that 
$$\|F(v)\|_{B^\omega_{q,1}} \leq C(1 + \|v\|_{L_\infty})^{\tilde\omega}\|F'\|_{BC^{\tilde\omega}_\infty(I,\BR)}
\|v\|_{B^\omega_{q,1}}.$$
\end{lem}
Recalling that $\eta^\epsilon_0 = \gamma_* + \tilde\eta^\epsilon_0$, we write
\begin{align*}
&(P''(\eta_0^\epsilon)\eta_0^\epsilon + P'(\eta_0^\epsilon)) \\
&= (P''(\gamma_*)+ \int^1_0P'''(\gamma_* + \ell\eta^\epsilon_0)\,\d\ell \eta^\epsilon_0)
 (\gamma_*+\eta^\epsilon_0) + P'(\gamma_*) + \int^1_0P''(\gamma_* + 
\ell\tilde\eta^\epsilon_0)\,\d\ell \tilde\eta^\epsilon_0\\
& = P''(\gamma_*)\gamma_*+ P'(\gamma_*) + Q_1(\tilde\eta^\epsilon_0)
\end{align*}
where we have set
\begin{align*}
Q_1(s) & = \gamma_*\int^1_0P'''(\gamma_*+\ell s)\,\d\ell s 
+ (P''(\gamma_*)+ \int^1_0P'''(\gamma_* + \ell s)\,\d\ell s)s
+ \int^1_0P''(\gamma_* + \ell s)\,\d\ell s.
\end{align*}
In view of \eqref{assump:0} and \eqref{appro:1.1}, we may assume that 
\begin{equation}\label{assump:2.1}
\rho_1/2 < \eta^\epsilon_0 < 2\rho_2,
\end{equation}
and so 
$$\rho_1/2-\gamma_* < \tilde\eta^\epsilon_0 < 2\rho_2-\gamma_*.$$
Thus, for $\ell \in (0, 1)$ we may assume that
\begin{equation}\label{assump:2.1}
\rho_1/2 -\gamma_*< \ell\tilde\eta^\epsilon_0 < 2\rho_2-\gamma_*
\end{equation}
for any $\ell \in (0, 1]$. From this observation, we may assume that $Q_1(s)$ is defined for 
$s \in (\rho_1/2-\gamma_*, 2\rho_2-\gamma_*)$ and $Q_1(0) = 0$.

By Lemmas \ref{lem:APH} and \ref{lem:Hasp}, and \eqref{appro:1.1}  we have 
\begin{align*}
&\|(P''(\eta_0^\epsilon)\eta_0^\epsilon+ P'(\eta_0^\epsilon))\nabla\tilde\eta^\epsilon_0
\dv\bv\|_{B^{\nu}_{q,1}}\\
&\quad \leq  C(|P''(\gamma_*)\gamma_*+P'(\gamma_*)|\|\nabla\eta^\epsilon_0\|_{B^{N/q}_{q,1}}
\|\dv\bv\|_{B^{\nu}_{q,1}}
+ \|Q_1(\tilde\eta^\epsilon_0)\|_{B^{N/q}_{q,1}}\|\nabla\eta^\epsilon_0\|_{B^{N/q}_{q,1}}
\|\dv\bv\|_{B^{\nu}_{q,1}} \\
&\quad \leq C(\gamma_*, \|\tilde\eta_0\|_{B^{N/q}_{q,1}}, 
 \|\nabla\tilde\eta^\epsilon_0\|_{B^{N/q}_{q,1}})\|\bv\|_{B^{\nu+2}_{q,1}}. 
\end{align*}
Here and in the sequel, $C(\gamma_*, \|\tilde\eta_0
\|_{B^{N/q}_{q,1}}, \|\nabla\tilde\eta^\epsilon_0\|_{B^{N/q}_{q,1}})$ denotes a constant depending on $\gamma_*$,  
$\|\tilde\eta_0\|_{B^{N/q}_{q,1}}$ and $\|\nabla\tilde\eta^\epsilon_0\|_{B^{N/q}_{q,1}}$.

Likewise, we write
$$P'(\eta^\epsilon_0)\eta^\epsilon_0 = (P'(\gamma_*) + \int^1_0 P''(\gamma_*+\ell\tilde\eta^\epsilon_0)\,
\d\ell \tilde\eta^\epsilon_0)(\gamma_* + \tilde\eta^\epsilon_0)
= P'(\gamma_*)\gamma_* + Q_2(\tilde\eta^\epsilon_0),
$$
where we have set
$$Q_2(s) = \int^1_0P''(\gamma_* + \ell s)\,\d\ell s \gamma_* + 
(P'(\gamma_*) + \int^1_0 P''(\gamma_* + \ell s)\,\d\ell s)s$$
for $s \in (\rho_1/2-\gamma_*, 2\rho_2-\gamma_*)$ and $Q_2(0) = 0$.
By Lemmas \ref{lem:APH} and  \ref{lem:Hasp}, we have
$$\|P'(\eta^\epsilon_0)\eta^\epsilon_0\nabla\dv\bv\|_{B^\nu_{q,1}}
\leq C(|P'(\gamma_*)\gamma_* | + (1+\|\tilde\eta^\epsilon_0\|_{L_\infty})^m\|\tilde\eta^\epsilon_0
\|_{B^{N/q}_{q,1}})\|\nabla\dv\bv\|_{B^\nu_{q,1}}.
$$
Therefore, we have
\begin{equation}\label{23.jn.20.3}
\|\nabla(P'(\eta_0^\epsilon)\eta_0^\epsilon\dv\bv)\|_{B^\nu_{q,1}} \leq 
C(\gamma_*, \|\tilde\eta_0
\|_{B^{N/q}_{q,1}}, \|\nabla\tilde\eta^\epsilon_0\|_{B^{N/q}_{q,1}})\|\bv\|_{B^{\nu+2}_{q,1}}.
\end{equation}
Choosing $\gamma>0$ so large that 
$\gamma^{-1}C(\gamma_*, \|\tilde\eta_0
\|_{B^{N/q}_{q,1}}, \|\nabla\tilde\eta^\epsilon_0\|_{B^{N/q}_{q,1}}) \leq 1/2$, we have
$$
|\lambda|^{-1}\|\nabla(P'(\eta^\epsilon_0)\eta^\epsilon_0\dv\bv)\|_{B^\nu_{q,1}}
\leq (1/2)\|\bv\|_{B^{\nu+2}_{q,1}}
$$
for any $\lambda \in \Sigma_\mu + \gamma$. 
\par
Thus, moving the term $\gamma^{-1}\nabla(P'(\eta^\epsilon_0)\eta^\epsilon_0\dv\bv)$ 
to the right hand side
in equations \eqref{SL:1} and using a successive  approximation method
based on Theorem \ref{thm:4}, 
we can prove that there exist positive constants  $\gamma$ depending on 
$\gamma_*$, $\|\tilde\eta_0\|_{B^{N/q}_{q,1}}$ and $\|\nabla\tilde\eta^\epsilon_0\|_{B^{N/q}_{q,1}}$,
 and $C$ depends on 
$\gamma_*$ and $\|\tilde\eta_0\|_{B^{N/q}_{q,1}}$ such that
problem \eqref{SL:1} admits a unique solution 
$\bv \in B^{\nu+2}_{q,1}$ satisfying the estimate \eqref{23.6.20.7}. 
This completes the proof of Lemma \ref{lem:15}.
\end{proof}
We now consider \eqref{SL:1} with $\bh = \bg-\lambda^{-1}\nabla(P'(\eta^\epsilon_0)f)$. 
In the same manner as in the proof of \eqref{23.jn.20.3}, we have
\begin{equation}\label{fundest.5}
\|\bh\|_{B^\nu_{q,1}} \leq C(\|\bg\|_{B^\nu_{q,1}} + |\lambda|^{-1}C(\gamma_*, \|\tilde\eta_0\|_{B^{N/q}_{q,1}}, 
\|\nabla\tilde\eta_0\|_{B^{N/q}_{q,1}})\|f\|_{B^{\nu+1}_{q,1}}).
\end{equation}
Choosing $\gamma$ so large that $\gamma^{-1}C(\gamma_*, \|\tilde\eta_0\|_{B^{N/q}_{q,1}}, 
\|\nabla\tilde\eta_0\|_{B^{N/q}{q,1}})  \leq 1$, and using  Lemma \ref{lem:15}, we see that 
problem \eqref{SL:1} admits a unique solution $\bv \in 
B^{\nu+2}_{q,1}$ satisfying the estimate:
\begin{equation}\label{finalest.1}
\|(\lambda, \lambda^{1/2}\bar\nabla, \bar\nabla^2)\bv\|_{B^{\nu}_{q,1}}
\leq C(\|f\|_{B^{\nu+1}_{q,1}} + \|\bg\|_{B^\nu_{q,1}})
\end{equation}
for $\nu = s$ and $s\pm \sigma$ and $\lambda \in \Sigma_\epsilon + \gamma$. 
Here and in the sequel, the constant $\gamma>0$ depends on $\gamma_*$, 
$\|\tilde\eta_0\|_{B^{N/q}_{q,1}}$,
and $\|\nabla\tilde\eta^\epsilon_0\|_{B^{N/q}_{q,1}}$, 
and  $C$ depends on $\gamma_*$ and $\|\tilde\eta_0\|_{B^{N/q}_{q,1}}$,
and we will not mention this fact in the sequel. 
\par 
Finally, define $\rho$ by  
$\rho=\lambda^{-1}(f-\eta^\epsilon_0\dv\bv)$.  Recall that $N/q \leq s+1$, and then by Lemma \ref{lem:APH}
and Lemma \ref{lem:Hasp}, and \eqref{finalest.1} with $\nu=s$, we have  
\begin{align*}
\|\lambda\rho\|_{B^{s+1}_{q,1}} &\leq C(\|f\|_{B^{s+1}_{q,1}} + (\gamma_*+\|\tilde\eta_0^\epsilon\|_{B^{s+1}_{q,1}})
\|\dv\bv\|_{B^{N/q}_{q,1}} +(\gamma_* +  \|\tilde\eta_0^\epsilon\|_{B^{N/q}_{q,1}})\|\nabla\dv\bv\|_{B^{s}_{q,1}})
\\
& \quad \leq  C(\gamma_*, \|\tilde\eta_0\|_{B^{s+1}_{q,1}})(\|f\|_{B^{s+1}_{q,1}}
+ \|\bg\|_{B^s_{q,1}})
\end{align*}
for every $\lambda \in \Sigma_\epsilon + \gamma$, because of  
$\tilde\eta_0 \in B^{s+1}_{q,1} \subset B^{N/q}_{q,1}$. 
This completes the proof of \eqref{est:1.0} in \thetag1 of Theorem \ref{thm:3}. 

We now prove \eqref{fundest.2**} and \eqref{fundest.3**}. Applying \eqref{fundest.2*} 
and \eqref{fundest.3*} to \eqref{SL:1}, we have
\begin{align*}
\|(\lambda, \lambda^{1/2}\bar\nabla, \bar\nabla^2)\bv\|_{B^s_{q,1}}& \leq C|\lambda|^{-\frac{\sigma}{2}}
(\|\bh\|_{B^{s+\sigma}_{q,1}} + |\lambda|^{-1}\|\nabla(P'(\eta_0^\epsilon)\eta^\epsilon_0\dv\bv)\|_{B^{s+\sigma}_{q,1}}),
\\
\|(\lambda, \lambda^{1/2}\bar\nabla, \bar\nabla^2)\pd_\lambda\bv\|_{B^s_{q,1}}& 
\leq C|\lambda|^{-(1-\frac{\sigma}{2})}
(\|\bh\|_{B^{s-\sigma}_{q,1}} + |\lambda|^{-1}\|\nabla(P'(\eta_0^\epsilon)\eta^\epsilon_0\dv\bv)\|_{B^{s-\sigma}_{q,1}}).
\end{align*}
for any $\lambda \in \Sigma_\mu + \gamma$ with $\bh= \bg-\lambda^{-1}\nabla(P'(\eta^\epsilon_0)f)$. 
By \eqref{23.jn.20.3}, we have 
$$|\lambda|^{-1}\|\nabla(P'(\eta_0^\epsilon)\eta_0^\epsilon\dv\bv)\|_{B^{s\pm\sigma}_{q,1}} \leq 
|\lambda|^{-1}C(\gamma_*, \|\tilde\eta_0
\|_{B^{N/q}_{q,1}}, \|\nabla\tilde\eta^\epsilon_0\|_{B^{N/q}_{q,1}})\|\bv\|_{B^{s+2+\pm\sigma}_{q,1}}.
$$
By \eqref{fundest.5}, we have
$$\|\bh\|_{B^\nu_{q,1}} \leq C(\|\bg\|_{B^\nu_{q,1}} + |\lambda|^{-1}C(\gamma_*, \|\tilde\eta_0\|_{B^{N/q}_{q,1}}, 
\|\nabla\tilde\eta_0\|_{B^{N/q}{q,1}})\|f\|_{B^{\nu+1}_{q,1}}).$$
Combining these estimates with Lemma \ref{lem:15} for
$\nu = s\pm\sigma$ and choosing $\gamma > 0$ so large that 
$$\gamma^{-1}C(\gamma_*, \|\tilde\eta_0\|_{B^{N/q}_{q,1}}, \|\nabla\tilde\eta^\epsilon_0\|_{B^{N/q}_{q,1}}) \leq 1,$$
we have
\begin{align*}
\|(\lambda, \lambda^{1/2}\bar\nabla, \bar\nabla^2)\bv\|_{B^s_{q,1}}& \leq C|\lambda|^{-\frac{\sigma}{2}}
(\|f\|_{B^{s+1+\sigma}_{q,1}} + \|\bg\|_{B^{s+\sigma}_{q,1}}), \\
\|(\lambda, \lambda^{1/2}\bar\nabla, \bar\nabla^2)\pd_\lambda\bv\|_{B^s_{q,1}}& 
\leq C|\lambda|^{-(1-\frac{\sigma}{2})}
(\|f\|_{B^{s+1-\sigma}_{q,1}} + \|\bg\|_{B^{s-\sigma}_{q,1}})
\end{align*}
for every $\lambda \in \Sigma_\mu + \gamma$. This shows \eqref{fundest.2**} and \eqref{fundest.3**}.

Finally, we shall prove \eqref{rho:1}.    Recalling that $\rho$ is defined by the formula:
$\lambda \rho= f-(\gamma_* + \tilde\eta_0^\epsilon)\dv\bv$, and 
using \eqref{finalest.1}, we have
\begin{equation}\label{diffest:1}\begin{aligned}
\|\rho\|_{B^{s+1}_{q,1}} &\leq C|\lambda|^{-1}(\|f\|_{B^{s+1}_{q,1}}
+ C(\gamma_*, \|\tilde\eta_0\|_{B^{N/q}_{q,1}}, \|\nabla\eta^\epsilon_0\|_{B^{N/q}_{q,1}})\|\bv\|_{B^{s+2}_{q,1}}) \\
&\leq |\lambda|^{-1}C(\gamma_*, \|\tilde\eta_0\|_{B^{N/q}_{q,1}}, \|\nabla\eta^\epsilon_0\|_{B^{N/q}_{q,1}})
(\|f\|_{B^{s+1}_{q,1}} + \|\bg\|_{B^s_{q,1}}).
\end{aligned}\end{equation}
Choosing $\gamma>0$ so large that $\gamma^{-(1-\frac{\sigma}{2})}
C(\gamma_*, \|\tilde\eta_0\|_{B^{N/q}_{q,1}}, \|\nabla\eta^\epsilon_0\|_{B^{N/q}_{q,1}}) \leq 1$, we have
\begin{equation}\label{diffest:2}
\|\rho\|_{B^{s+1}_{q,1}} \leq C|\lambda|^{-\frac{\sigma}{2}}
(\|f\|_{B^{s+1}_{q,1}}+ \|\bg\|_{B^s_{q,1}})
\end{equation}
for every $\lambda \in \Sigma_\mu + \gamma$. Thus, we have the 
first part of \eqref{rho:1}. \par
Differentiating the formula: $\lambda \rho= f-\eta_0\dv\bv$ with respect to 
$\lambda$, we have
$\pd_\lambda \rho = -\lambda^{-1}(\rho+\eta_0^\epsilon\dv\pd_\lambda\bv)$.  
By Lemma \ref{lem:APH}, we have
\begin{equation}\label{diffrho.1}
\|\pd_\lambda\rho\|_{B^{s+1}_{q,1}}  \leq |\lambda|^{-1}(\|\rho\|_{B^{s+1}_{q,1}}
+ C(\gamma_*, \|\tilde\eta_0\|_{B^{N/q}_{q,1}}, \|\nabla\tilde\eta^\epsilon_0\|_{B^{N/q}_{q,1}})
\|\pd_\lambda\bv\|_{B^{s+2}_{q,1}}).
\end{equation}
To estimate $\pd_\lambda\bv$, we differentiate \eqref{s:2},  which reads  
$$\left\{\begin{aligned}
\lambda \pd_\lambda\rho+\eta_0^\epsilon\dv\pd_\lambda\bv = -\rho& 
&\quad&\text{in $\HS$}, \\
\eta_0^\epsilon\lambda\pd_\lambda\bv - \alpha\Delta \pd_\lambda\bv 
 -\beta\nabla\dv\pd_\lambda\bv 
+  \nabla(P'(\eta_0^\epsilon) \pd_\lambda\rho) = -\eta_0^\epsilon\bv& &\quad&\text{in $\HS$}, \\
\pd_\lambda\bv|_{\pd\HS} =0.
\end{aligned}\right.$$
Applying \eqref{finalest.1} with $\nu=s$ implies 
$$
\|(\lambda, \lambda^{1/2}\bar\nabla, \bar\nabla^2)\pd_\lambda\bv\|_{B^s_{q,1}}
\leq C(\|\rho\|_{B^{s+1}_{q,1}} +(\gamma_*+\|\tilde\eta^\epsilon_0\|_{B^{N/q}_{q,1}}) \|\bv\|_{B^s_{q,1}}).
$$
We use \eqref{finalest.1} with $\nu=s$ to obtain 
$$\|\bv\|_{B^s_{q,1}} = |\lambda|^{-1}\|\lambda \bv\|_{B^s_{q,1}}
\leq C|\lambda|^{-1}(\|f\|_{B^{s+1}_{q,1}} + \|\bg\|_{B^s_{q,1}}).$$
Combining these estimates and using \eqref{diffest:1} implies 
\begin{align*}
\|\pd_\lambda\rho\|_{B^{s+1}_{q,1}}
&\leq C(\gamma_*, \|\tilde\eta_0\|_{B^{N/q}_{q,1}}, \|\nabla\tilde\eta^\epsilon_0
\|_{B^{N/q}_{q,1}})|\lambda|^{-2}(\|f\|_{B^{s+1}_{q,1}} + \|\bg\|_{B^s_{q,1}}).
\end{align*}
Choosing $\gamma>0$ so large that $\gamma^{-1-\frac{\sigma}{2}}
C(\gamma_*, \|\tilde\eta_0\|_{B^{N/q}_{q,1}}, \|\nabla\tilde\eta^\epsilon_0\|_{B^{N/q}_{q,1}}) \leq 1$, 
we see that 
$$\|\pd_\lambda \rho\|_{B^{s+1}_{q,1}} \leq |\lambda|^{-(1-\frac{\sigma}{2})}
(\|f\|_{B^{s+1}_{q,1}} + \|\bg\|_{B^s_{q,1}})$$
for every $\lambda \in \Sigma_\mu + \gamma$. 
Thus, we have proved \eqref{rho:1}. 
 This completes the proof of Theorem \ref{thm:3}. 
\section{$L_1$ semigroup}

In this section, we assume that $1 < q < \infty$, $\sigma>0$, $-1/q < s-\sigma < s < s+\sigma < 1/q$,
and that $s$ satisfies \eqref{assump:s} and $\sigma$ satisfies \eqref{assump:sigma}.  
Let $\eta_0(x) = \gamma_* + \tilde\eta_0(x)$ and we assume that $\tilde\eta_0(x) \in B^{s+1}_{q,1}(\HS)$
and satisfy the conditions \eqref{assump:0}. Let $\tilde\eta^\epsilon_0(x)$ be an regularization of $\tilde\eta_0(x)$
satisfying \eqref{appro:1.1} and 
\eqref{appro:1.2}, and set $\rho^\epsilon_0(x) = \gamma_* + \tilde\rho^\epsilon_0(x)$. 
  From \eqref{assump:s}  we know that $N/q \leq s+1$,
and so $\tilde\eta_0 \in B^{N/q}_{q,1}(\HS)$. 
In the sequel, $\mu$ is a fixed constant such that $0 < \mu < \pi/2$. \par 
In this section, we consider evolution equations:
\begin{equation}\label{semi:1}\left\{\begin{aligned}
\pd_t\rho + \eta_0^\epsilon(x)\dv\bu &=F&\quad&\text{in $\HS\times(0, T)$}, \\
\eta_0^\epsilon(x)\pd_t\bu - \alpha\Delta\bu - \beta\nabla\dv\bu + \nabla(P'(\eta^\epsilon_0)\rho)
& = \bG &\quad&\text{in $\HS\times(0, T)$}, \\
\bu|_{\pd\HS} = 0, \quad (\rho, \bu) = (f, \bg)&&\quad
&\text{in $\HS$}.
\end{aligned}\right.\end{equation}
The corresponding resolvent problem to  \eqref{semi:1} reads 
equations \eqref{s:2}.  Let 
$$\CH = \{(f, \bg) \mid f \in B^{s+1}_{q,1}(\HS), \enskip
\bg \in B^s_{q,1}(\HS)^N\} = B^{s+1}_{q,1}(\HS)\times B^s_{q,1}(\HS)^N.
$$
Let $\CA$ and $\CD(\CA)$ be an operator and 
its domain corresponding to  equations \eqref{semi:1} defined by
\begin{align*}
\CD(\CA) &= \{(\rho, \bu) \in \CH \mid \bu \in B^{s+2}_{q,1}(\HS), 
\quad \bu|_{\pd\HS} = 0\}, \\
\CA(\rho, \bu) &= (\eta^\epsilon_0\dv\bu, \,\,
-\alpha\eta^\epsilon_0(x)^{-1}\Delta\bu -
\beta\eta^\epsilon_0(x)\nabla\dv\bu+ \eta^\epsilon_0(x)^{-1}\nabla(P'(\eta^\epsilon_0(x)\rho))
\end{align*}
Then, problem \eqref{s:2}  reads 
\begin{equation}\label{resol:0}
(\lambda\bI + \CA)(\rho, \bu)= (f, \bg).
\end{equation}

The following theorem follows from  Theorem \ref{thm:3}. 
\begin{thm}\label{thm:t.1}
Let $1 < q < \infty$ and $-1 + 1/q \leq s < 1/q$.  Assume that $s$ satisfies the condition
\eqref{assump:s}. Let $\eta_0(x)$ be a 
function given in Theorem \ref{thm:1}. Then, an operator $\CA$
generates a $C_0$ analytic semigroup on $\CH$. 
\end{thm}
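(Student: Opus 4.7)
The plan is to apply the standard characterization of generators of bounded analytic semigroups: an operator $\CA$ on a Banach space $\CH$ generates a $C_0$ analytic semigroup provided (i) $\CA$ is densely defined and closed, and (ii) there exist constants $\gamma > 0$ and $C > 0$ such that $\Sigma_\mu + \gamma \subset \varrho(-\CA)$ and
\[
\|\lambda(\lambda\bI + \CA)^{-1}\|_{\CL(\CH)} \leq C \qquad (\lambda \in \Sigma_\mu + \gamma).
\]
The bulk of the work has already been done in Theorem \ref{thm:3}; what remains is to rewrite its conclusions in operator-theoretic form.

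First, I would check density of $\CD(\CA)$ in $\CH$. Since $\bu \in B^{s+2}_{q,1}(\HS)$ with the trace vanishing on $\pd\HS$ defines a space dense in $B^s_{q,1}(\HS)^N$ (because $-1+1/q < s < 1/q$, so the zero-trace condition holds in $B^{s+2}_{q,1}$ and is compatible with approximation in the weaker norm), and $B^{s+1}_{q,1}(\HS)$ is included in itself trivially, the product space $\CD(\CA)$ is dense in $\CH$. Closedness follows from the resolvent estimates: since $(\lambda\bI + \CA)^{-1}$ exists as a bounded operator on $\CH$ for some $\lambda \in \Sigma_\mu + \gamma$ (by Theorem \ref{thm:3}(1)), $\CA$ is automatically closed on its domain $\CD(\CA)$, which equals the range of $(\lambda\bI + \CA)^{-1}$.

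Next, I would translate the bound \eqref{est:1.0} of Theorem \ref{thm:3}(1) into the required resolvent estimate. For $(f, \bg) \in \CH$ and $\lambda \in \Sigma_\mu + \gamma$, the unique solution $(\rho, \bv)$ of \eqref{s:2} satisfies
\[
|\lambda|\bigl(\|\rho\|_{B^{s+1}_{q,1}(\HS)} + \|\bv\|_{B^s_{q,1}(\HS)}\bigr) \leq \|\lambda\rho\|_{B^{s+1}_{q,1}} + \|\lambda\bv\|_{B^s_{q,1}} \leq C\bigl(\|f\|_{B^{s+1}_{q,1}} + \|\bg\|_{B^s_{q,1}}\bigr),
\]
so that $\|\lambda(\lambda\bI + \CA)^{-1}(f, \bg)\|_\CH \leq C\|(f, \bg)\|_\CH$, which is exactly the analyticity condition (ii) above. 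Surjectivity and uniqueness in Theorem \ref{thm:3}(1) give $\Sigma_\mu + \gamma \subset \varrho(-\CA)$.

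The step I expect to require the most care is verifying that the operator-theoretic form of \eqref{s:2} really coincides with $(\lambda\bI + \CA)(\rho, \bv) = (f, \bg)$ after dividing the second equation of \eqref{s:2} by $\eta^\epsilon_0$; this uses the lower bound $\eta^\epsilon_0 \geq \rho_1/2 > 0$ from \eqref{assump:2.1}, together with Lemma \ref{lem:APH} and Lemma \ref{lem:Hasp} to ensure that multiplication by $1/\eta^\epsilon_0$ and by $P'(\eta^\epsilon_0)$ are bounded operators on $B^{s+1}_{q,1}$ and $B^s_{q,1}$. Once this correspondence is justified, Theorem \ref{thm:t.1} follows immediately.
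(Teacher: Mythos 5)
Your proposal is correct and follows essentially the same route as the paper: derive the resolvent estimate $\|\lambda(\lambda\bI+\CA)^{-1}\|_{\CL(\CH)}\leq C$ on $\Sigma_\mu+\gamma$ from \eqref{est:1.0} and then invoke the standard characterization of generators of analytic semigroups. The paper simply cites holomorphic semigroup theory (\cite{YK}) without spelling out density and closedness, which you correctly supply; the one small inaccuracy is that dividing the second equation of \eqref{s:2} by $\eta^\epsilon_0$ turns the data into $(f,(\eta^\epsilon_0)^{-1}\bg)$ rather than $(f,\bg)$, but this is harmless since multiplication by $\eta^\epsilon_0$ and $(\eta^\epsilon_0)^{-1}$ is a bounded isomorphism of $B^s_{q,1}(\HS)^N$ by \eqref{assump:2.1} together with Lemmas \ref{lem:APH} and \ref{lem:Hasp}.
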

\begin{proof} Noting that $-1+N/q \leq s < 1/q$ as follows from \eqref{assump:s}, by Theorem \ref{thm:3} 
we know that there exist two constants $\gamma$ and $C$ such that for any $\lambda \in 
\Sigma_\mu + \gamma$, $f \in B^{s+1}_{q,1}(\HS)$ and $\bg \in B^s_{q,1}(\HS)^N$ problem \eqref{s:2} 
admits a unique solution $\rho \in B^{s+1}_{q,1}(\HS)$ and $\bv \in B^{s+2}_{q,1}(\HS)^N$ satisfying the 
estimates:
\begin{equation}\label{resol:3.1}\|(\lambda, \lambda^{1/2}\bar\nabla, \bar\nabla^2)\bv\|_{B^s_{q,1}(\HS)}
+ \|\lambda\rho\|_{B^{s+1}_{q,1}(\HS)} 
 \leq C(\|f\|_{B^{s+1}_{q,1}(\HS)} + \|\bg\|_{B^s_{q,1}(\HS)}). 
\end{equation}
We see that  $\gamma$ depends on $\gamma_*$, $\|\tilde\eta_0\|_{B^{N/q}_{q,1}(\HS)}$
 and $\|\nabla\tilde\eta^\epsilon_0\|_{B^{N/q}_{q,1}(\HS)}$ and  that $C$ depends on  $\gamma_*$ 
and $\|\tilde\eta_0\|_{B^{s+1}_{q,1}(\HS)}$, because $\|\tilde\eta_0\|_{B^{N/q}_{q,1}(\HS)} \leq C\|\tilde\eta_0\|_{B^{s+1}_{q,1}
(\HS)}$. 
In particular, from \eqref{resol:3.1} it follows that $(\lambda \bI+\CA)^{-1}$
exists for $\lambda \in \Sigma_\epsilon + \gamma$ and 
\begin{equation}\label{resol:3.2}
\|\lambda(\lambda\bI + \CA)(f, \bg)\|_\CH + \|(\lambda\bI+\CA)^{-1}(f, \bg)\|_{\CD(\CA)}
\leq C\|(f, \bg)\|_{\CH}.
\end{equation}
Thus, by holomorphic semigroup theory (cf. \cite{YK}), we see the generation of $C^0$ analytic semigroup
 associated with equations \eqref{semi:1}. 

\end{proof}
Let $\{T(t)\}_{t\geq 0}$ be a $C_0$ analytic semigroup generated by
$\CA$ and we shall prove its $L_1$ maximal regularity. 
The idea is due to  Kuo \cite{Kuo23}, and also due to Shibata and Watanabe
\cite{ SW1, SW2}. Below, we write 
$$T(t)(f, \bg) = (T_1(t)(f, \bg), T_2(t)(f, \bg)).$$
Let $\rho = T_1(t)(f, \bg)$ and $\bu = T_2(t)(f, \bg)$, and then  $\rho$ and $
\bu$ satisfy equations \eqref{semi:1} with $F=0$ and $\bG=0$. 
We shall prove the following theorem.
\begin{thm}\label{thm:t.2}
Let $1 < q < \infty$ and $-1+1/q < s < 1/q$. Assume that $s$ satisfies the condition \eqref{assump:s}.  
Let $\eta_0(x)$ be a 
function given in Theorem \ref{thm:1}. 
Let $\{T(t)\}_{t\geq 0}$ be a continuous analytic semigroup
generated by $\CA$.  Then, there exist positive constants $\gamma$ and $C$ such that for any
$(f, \bg) \in \CH$, there holds
$$\int^\infty_0e^{-\gamma t}(\|(\pd_t, \bar\nabla^2)T_2(t)(f, \bg)\|_{B^s_{q,1}(\HS)}
+ \|(1, \pd_t)T_1(t)(f, \bg)\|_{B^{s+1}_{q,1}(\HS)})\,\d t
\leq C\|(f, \bg)\|_{\CH}.
$$
Here, $\gamma$ depends on $\gamma_*$, 
$\|\eta^\epsilon_0\|_{B^{N/q}_{q,1}(\HS)}$
 and $\|\nabla\tilde\eta^\epsilon_0\|_{B^{N/q}_{q,1}(\HS)}$,  and $C$ depends on 
$\gamma_*$ and $\|\eta_0\|_{B^{N/q}_{q,1}(\HS)}$.
\end{thm}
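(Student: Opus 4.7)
The plan is to follow the analytic-semigroup recipe sketched in the introduction: represent $T(t)$ as a Dunford contour integral, extract a factor $1/t$ by one integration by parts in $\lambda$, feed in the refined resolvent bounds of Theorem \ref{thm:3}\thetag{2}--\thetag{3}, and close by the real-interpolation identity $(X_\sigma,X_{-\sigma})_{1/2,1}=\CH$ in the Besov scale, with $X_{\pm\sigma}:=B^{s+1\pm\sigma}_{q,1}(\HS)\times B^{s\pm\sigma}_{q,1}(\HS)^N$. Concretely, for $(f,\bg)\in\CH$ I would set $(\rho(t),\bu(t)):=T(t)(f,\bg)$ and
\[
(\rho(t),\bu(t)) = \frac{1}{2\pi i}\int_{\Gamma+\gamma}e^{\lambda t}(\rho(\lambda),\bv(\lambda))\,\d\lambda,\qquad (\rho(\lambda),\bv(\lambda)):=(\lambda\bI+\CA)^{-1}(f,\bg),
\]
with $\Gamma=\Gamma_+\cup\Gamma_-$ as in Section~1, and bring the $\pd_t$ and $\bar\nabla^2$ operators under the integral.

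For the velocity component $\bu$, the plan is to integrate by parts in $\lambda$ once, so that
\[
\pd_t\bu(t) = -\frac{1}{2\pi i\,t}\int_{\Gamma+\gamma}e^{\lambda t}\bigl(\bv(\lambda)+\lambda\pd_\lambda\bv(\lambda)\bigr)\,\d\lambda,
\]
and similarly for $\bar\nabla^2\bu(t)$. Parameterizing $\lambda=re^{\pm i(\pi-\mu)}+\gamma$ and plugging in the two estimates of Theorem \ref{thm:3}\thetag{2} in the forms $\|\bv(\lambda)\|_{B^s_{q,1}}\le C|\lambda|^{-1-\sigma/2}\|(f,\bg)\|_{X_\sigma}$ and $\|\lambda\pd_\lambda\bv(\lambda)\|_{B^s_{q,1}}\le C|\lambda|^{-(1-\sigma/2)}\|(f,\bg)\|_{X_{-\sigma}}$, the scalar integrals $\int_0^\infty r^{\mp\sigma/2-1}e^{-(r\cos\mu)t}\,\d r$ yield $t^{\pm\sigma/2}$ respectively, producing the twin pointwise bounds
\[
\|\pd_t\bu(t)\|_{B^s_{q,1}}\le Ce^{\gamma t}t^{-1+\sigma/2}\|(f,\bg)\|_{X_\sigma},\qquad \|\pd_t\bu(t)\|_{B^s_{q,1}}\le Ce^{\gamma t}t^{-1-\sigma/2}\|(f,\bg)\|_{X_{-\sigma}},
\]
together with their counterparts for $\bar\nabla^2\bu$ obtained from \eqref{fundest.2**}--\eqref{fundest.3**}. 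Since condition \eqref{assump:sigma} keeps $s\pm\sigma$ inside $(-1+1/q,1/q)$, the interpolation identity $(X_\sigma,X_{-\sigma})_{1/2,1}=\CH$ holds in the Besov scale, and a K-functional decomposition $(f,\bg)=F_0+F_1$ combined with the twin bounds gives $\int_0^\infty e^{-\gamma t}(\|\pd_t\bu(t)\|_{B^s_{q,1}}+\|\bar\nabla^2\bu(t)\|_{B^s_{q,1}})\,\d t \le C\|(f,\bg)\|_\CH$.

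For the density component $\rho$ the situation is simpler: Theorem \ref{thm:3}\thetag{3} supplies both a $\sigma/2$-gain and a $(1-\sigma/2)$-gain in $\lambda$ with data already in $\CH$ itself (reading the second line of \eqref{rho:1} as a bound on $\|\pd_\lambda\rho\|_{B^{s+1}_{q,1}}$, which is what the derivation at the end of Section~2 actually establishes). Repeating the contour and integration-by-parts argument for both $\rho(t)$ and $\pd_t\rho(t)$ yields the twin pointwise estimates
\[
\|\rho(t)\|_{B^{s+1}_{q,1}}+\|\pd_t\rho(t)\|_{B^{s+1}_{q,1}} \le Ce^{\gamma t}t^{-1+\sigma/2}\|(f,\bg)\|_\CH,
\]
together with the analogous bound by $Ce^{\gamma t}t^{-1-\sigma/2}\|(f,\bg)\|_\CH$, and now the trivial interpolation $(\CH,\CH)_{1/2,1}=\CH$ closes the $t$-integral directly. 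Summing the velocity and density contributions gives the desired inequality.

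The hard part is not the core analytic content, which is already contained in Theorem~\ref{thm:3}, but the careful bookkeeping: one must justify convergence of the contour integral and legitimacy of the $\lambda$-integration by parts (no boundary contribution at $|\lambda|=\infty$, which forces $\gamma$ to be taken sufficiently large, with the same dependence on $\gamma_*,\|\tilde\eta_0\|_{B^{N/q}_{q,1}},\|\nabla\tilde\eta^\epsilon_0\|_{B^{N/q}_{q,1}}$ as in Theorem~\ref{thm:3}); verify the Besov real-interpolation identity across the full range of admissible $(q,s,\sigma)$; and carry the K-functional step through uniformly. The cleanest execution of this last step is the duality argument of Shibata--Watanabe \cite{SW1,SW2} and Kuo \cite{Kuo23}, already invoked in the introduction, so no new analytical estimate beyond Theorem~\ref{thm:3} should be required.
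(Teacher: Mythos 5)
Your overall strategy—Dunford contour representation, twin pointwise-in-$t$ bounds $t^{-1\pm\sigma/2}$, dyadic real interpolation via $\ell^1_1=(\ell^{1-\sigma/2}_\infty,\ell^{1+\sigma/2}_\infty)_{1/2,1}$—is exactly the paper's, and your reading of the second line of \eqref{rho:1} as a bound on $\|\pd_\lambda\rho\|_{B^{s+1}_{q,1}}$ (not $\|\rho\|$) correctly identifies a typo; your further observation that the density bounds already come with $\|(f,\bg)\|_\CH$ on the right, so no interpolation is needed for $T_1$, is also correct (the paper states \eqref{6.21.4} with $\CH_{\pm\sigma}$ only for uniformity).

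The genuine problem is in the velocity part. You propose to estimate $\pd_t T_2(t)(f,\bg)$ directly from the contour integral, writing $\pd_t\bu(t)=-\tfrac{1}{2\pi i\,t}\int_{\Gamma+\gamma}e^{\lambda t}\pd_\lambda(\lambda\bv)\,\d\lambda$ with $\pd_\lambda(\lambda\bv)=\bv+\lambda\pd_\lambda\bv$. For the $t^{-1-\sigma/2}$ twin bound you must control \emph{both} terms by $C|\lambda|^{-(1-\sigma/2)}\|(f,\bg)\|_{X_{-\sigma}}$. Theorem~\ref{thm:3}\thetag{2} supplies this for $\lambda\pd_\lambda\bv$ via \eqref{fundest.3**}, but it does \emph{not} supply $\|\bv\|_{B^s_{q,1}}\le C|\lambda|^{-(1-\sigma/2)}\|(f,\bg)\|_{X_{-\sigma}}$; the estimate you write instead, $\|\bv\|_{B^s_{q,1}}\le C|\lambda|^{-1-\sigma/2}\|(f,\bg)\|_{X_\sigma}$, has the wrong space $X_\sigma$ on the right (so it interpolates to the wrong endpoint) and, fed into the IBP integral, produces $\int_0^\infty r^{-1-\sigma/2}e^{-rt\cos\mu}\,\d r$, which diverges at $r=0$. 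The missing bound on $\bv$ \emph{can} be derived—apply the $\nu=s-\sigma$ case of the resolvent estimate \eqref{finalest.1} to get $\|\bv\|_{B^{s-\sigma}_{q,1}}\le C|\lambda|^{-1}\|(f,\bg)\|_{X_{-\sigma}}$ and $\|\bv\|_{B^{s+2-\sigma}_{q,1}}\le C\|(f,\bg)\|_{X_{-\sigma}}$, then interpolate with weight $\theta=1-\sigma/2$—but you need to say this; it is not in Theorem~\ref{thm:3} as stated. The paper sidesteps the issue entirely: it runs the contour argument only for $\bar\nabla^2 T_2$, where $\pd_\lambda(\bar\nabla^2\bv)=\bar\nabla^2\pd_\lambda\bv$ has no loose $\bv$-term so that \eqref{fundest.3**} applies cleanly, and then recovers the $L_1$ estimate on $\pd_t T_2$ (and $\pd_t T_1$) at the very end from the PDE $\pd_t\bu=(\eta^\epsilon_0)^{-1}(\alpha\Delta\bu+\beta\nabla\dv\bu-\nabla(P'(\eta^\epsilon_0)\rho))$ together with the already established $L_1$ bounds on $\bar\nabla^2 T_2$ and $T_1$. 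Either adopt that route or supply the interpolation step above; as written the $\pd_t\bu$ twin bounds are not justified by the estimates you invoke.
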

\begin{proof}
Let $(\theta, \bv) = (\lambda+\CA)^{-1}(f, \bg)$, then $\theta \in 
B^{s+1}_{q,1}(\HS)$ and $\bv \in B^{s+2}_{q,1}(\HS)^N$ satisfy 
equations \eqref{s:2}.  Since $B^{s+1+\sigma}_{q,1}(\HS)\times B^{s+\sigma}_{q,1}(\HS)^N$
is dense in $B^{s+1}_{q,1}(\HS)\times B^s_{q,1}(\HS)^N$, we may assume that 
$(f, \bg) \in B^{s+1+\sigma}_{q,1}(\HS)\times B^{s+\sigma}_{q,1}(\HS)^N$  below.
Thus, by Theorem \ref{thm:3} we know that
\begin{align}\label{6.21.1}
\|(\lambda, \bar\nabla^2)\bv\|_{B^{s}_{q,1}} &\leq C|\lambda|^{-\frac{\sigma}{2}}
\|(f, \bg)\|_{B^{s+1+\sigma}_{q,1}(\HS)\times B^{s+\sigma}_{q,1}(\HS)}, 
\\
\label{6.21.2}
\|(\lambda, \bar\nabla^2)\pd_\lambda\bv
\|_{B^s_{q,1}(\HS)} &\leq C|\lambda|^{-(1-\frac{\sigma}{2})}
\|(f, \bg)\|_{B^{s+1-\sigma}_{q,1}(\HS)\times B^{s-\sigma}_{q,1}(\HS)}
\end{align}
for every $\lambda \in \Sigma_\mu +\gamma$. 
Here, $\gamma$ depends on $\gamma_*$, 
$\|\eta^\epsilon_0\|_{B^{N/q}_{q,1}(\HS)}$
 and $\|\nabla\tilde\eta^\epsilon_0\|_{B^{N/q}_{q,1}(\HS)}$,  and $C$ depends on 
$\gamma_*$ and $\|\eta_0\|_{B^{N/q}_{q,1}(\HS)}$.
\par
Let $\Gamma = \Gamma_+ \cup \Gamma_-$ be a contour in the complex plane $\BC$ defined by
$$\Gamma_\pm = \{\lambda = re^{i(\pi\pm\epsilon)} \mid r \in (0, \infty)\}.$$
Here, $\epsilon \in (0, \pi/2)$.  According to  well-known  Holomorphic 
semigroup theory  (cf. \cite[p.257]{YK}), $T(t)$ is represented by
$$T(t)(f, \bg) = \frac{1}{2\pi i}\int_{\Gamma + \gamma} e^{\lambda t}
(\lambda\bI+\CA)^{-1}(f, \bg) \,\d\lambda \quad\text{for $t>0$}.
$$
Notice that $(\lambda\bI+\CA)^{-1}(f, \bg) = (\theta, \bv)$. 
We have  
$$T_1(t)(f, \bg) =\frac{1}{2\pi i}\int_{\Gamma + \gamma} e^{\lambda t}
\theta \,\d\lambda,
\quad  T_2(t)(f, \bg) = \frac{1}{2\pi i}\int_{\Gamma + \gamma} e^{\lambda t}
\bv \,\d\lambda.$$
Let $\CH_\pm = B^{s+1\pm\sigma}_{q,1}(\HS)\times B^{s\pm\sigma}_{q,1}(\HS)$.
By change of variable: $\lambda t= \ell$ and  by \eqref{6.21.1} and \eqref{6.21.2}, we have
\begin{equation}\label{6.21.3}\begin{aligned}
\|\bar\nabla^2 T_2(t)(f, \bg)\|_{B^s_{q,1}(\HS)}
&\leq Ce^{\gamma t}t^{-1+\frac{\sigma}{2}}\|(f, \bg)\|_{\CH_{+\sigma}}, \\
\|\bar\nabla^2 T_2(t)(f, \bg)\|_{B^s_{q,1}(\HS)}
&\leq Ce^{\gamma t}t^{-1-\frac{\sigma}{2}}\|(f, \bg)\|_{\CH_{-\sigma}}.
\end{aligned}\end{equation}
In fact, noting that ${\rm Re}\, e^{\lambda t} = e^{t(\gamma + r\cos(\pi\pm\epsilon)}
= e^{\gamma t}e^{-tr\cos\epsilon}$ for $\lambda 
\in \Gamma_\pm + \gamma$, by \eqref{6.21.1} we have
\begin{align*}
\|\bar\nabla^2T_2(t)(f, \bg)\|_{B^s_{q,1}(\HS)}
& \leq Ce^{\gamma t}\int^\infty_0e^{-tr\cos\epsilon}
\|\bar\nabla^2\bv\|_{B^s_{q,1}(\HS)}\, \d r \\ 
& \leq Ce^{\gamma t}\int^\infty_0e^{-tr\cos\epsilon}r^{-\frac{\sigma}{2}}
\, \d r \, \|(f, \bg)\|_{\CH_{+\sigma}}
\\
& = Ce^{\gamma t}t^{-1+\frac{\sigma}{2}}
\int^\infty_0e^{-s\cos\epsilon}s^{-\frac{\sigma}{2}}
\, \d s \, \|(f, \bg)\|_{\CH_{+\sigma}}.
\end{align*}
Thus, we have the first inequality in \eqref{6.21.3}. 
To prove the second inequality in \eqref{6.21.3}, we write
$$\bar\nabla^2T_2(t)(f, \bg)
= -\frac{1}{2\pi i t}\int_{\Gamma+\gamma}
e^{\lambda t}\pd_\lambda (\bar\nabla^2\bv)\,d\lambda.
$$
And then, by \eqref{6.21.1}
\begin{align*}
\|\bar\nabla^2T_2(t)(f, \bg)\|_{B^s_{q,1}(\HS)}
& \leq Ct^{-1}e^{\gamma t}\int^\infty_0e^{-tr\cos\epsilon}
\|\bar\nabla^2\pd_\lambda \bv\|_{B^s_{q,1}(\HS)}\, \d r \\ 
& \leq Ct^{-1}e^{\gamma t}\int^\infty_0e^{-tr\cos\epsilon}r^{-1+\frac{\sigma}{2}}
\, \d r \, \|(f, \bg)\|_{\CH_{-\sigma}}
\\
& = Ce^{\gamma t}t^{-1-\frac{\sigma}{2}}
\int^\infty_0e^{-s\cos\epsilon}s^{-1+\frac{\sigma}{2}}
\, \d s \, \|(f, \bg)\|_{\CH_{-\sigma}}. 
\end{align*}
Thus, we have the second inequality of \eqref{6.21.3}.
\par
Since
$$ T_1(t)(f, \bg) = \frac{1}{2\pi i}
\int_{\Gamma+\gamma} e^{\lambda t} \theta\,\d\lambda,$$
by \thetag3 of Theorem \ref{thm:3}, we also have
\begin{equation}\label{6.21.4}\begin{aligned}
\|T_1(t)(f, \bg)\|_{B^{s+1}_{q,1}(\HS)}
&\leq Ce^{-\gamma t}t^{-1+\frac{\sigma}{2}}\|(f, \bg)\|_{\CH_{+\sigma}}, \\
\|T_1(t)(f, \bg)\|_{B^{s+1}_{q,1}(\HS)}
&\leq Ce^{-\gamma t}t^{-1-\frac{\sigma}{2}}\|(f, \bg)\|_{\CH_{-\sigma}}.
\end{aligned}\end{equation}
Using \eqref{6.21.3} and \eqref{6.21.4} and real interpolation method, we have
\begin{align*}
\int^\infty_0 e^{-\gamma t}\|\bar\nabla^2T_2(t)(f, \bg)\|_{B^{s}_{q,1}(\HS)}\,\d t &\leq C\|(f, \bg)\|_\CH, \\
\int^\infty_0 e^{-\gamma t}\|T_1(t)(f, \bg)\|_{B^{s+1}_{q,1}(\HS)}\,dt
&\leq C\|(f, \bg)\|_{\CH}.
\end{align*}
In fact, we write 
\begin{align*}
&\int^\infty_0 e^{-\gamma t}\|\bar\nabla^2T_2(t)(f, \bg)\|_{B^s_{q,1}(\HS)}\,\d t \\
&\quad = \sum_{j \in \BZ} \int^{2^{(j+1)}}_{2^j}e^{-\gamma t}
\|\bar\nabla^2T_2(t)(f, \bg)\|_{B^s_{q,1}(\HS)}\, \d t\\
& \quad\leq \sum_{j \in \BZ} \int^{2^{(j+1)}}_{2^j}
\sup_{t \in (2^j, 2^{j+1})}(e^{-\gamma t}
\|\bar\nabla^2T_2(t)(f, \bg)\|_{B^s_{q,1}(\HS)})\,\d t
\\
& \quad =2\sum_{j \in \BZ} 2^j
\sup_{t \in (2^j, 2^{j+1})}(e^{-\gamma t}\|\bar\nabla^2T_2(t)(f, \bg)\|_{B^s_{q,1}(\HS)}).
\end{align*}
Setting $a_j = \sup_{t \in (2^j, 2^{j+1})}e^{-\gamma t}
\|\bar\nabla^2T_2(t)(f, \bg)\|_{B^s_{q,1}(\HS)}$, 
we have
$$
\int^\infty_0 e^{-\gamma t}\|\bar\nabla^2T_2(t)(f, \bg)\|_{B^s_{q,1}(\HS)}\,\d t
\leq 2((2^ja_j))_{\ell_1} = 2((a_j)_{j \in \BZ})_{\ell^1_1}. 
$$
Here and in the following, 
$\ell^s_q$ denotes the set of all sequences $(2^{js}a_j)_{j \in \BZ}$ such that 
\begin{align*}
\|((a_j)_{j \in \BZ})\|_{\ell^s_q} &= \Bigl\{\sum_{j \in \BZ} 
(2^{js}|a_j|)^q \Bigr\}^{1/q} < \infty \quad 1 \leq q < \infty, \\
\|((a_j)_{j \in \BZ})\|_{\ell^s_\infty} &= \sup_{j \in \BZ} 
2^{js}|a_j| < \infty \quad  q = \infty.
\end{align*}
By \eqref{6.21.3}, we have
$$\sup_{j \in \BZ} 2^{j(1-\frac{\sigma}{2})}a_j \leq C\|(f, \bg)\|_{\CH_{+\sigma}}, 
\quad\sup_{j \in \BZ} 2^{j(1+\frac{\sigma}{2})}a_j \leq C\|(f, \bg)\|_{\CH_{-\sigma}}.
$$
Namely, we have
$$\|(a_j)\|_{\ell_\infty^{1-\frac{\sigma}{2}}} \leq C\|(f, \bg)\|_{\CH_{+\sigma}},
\quad \|(a_j)\|_{\ell_\infty^{1+ \frac{\sigma}{2}}} \leq C\|(f, \bg)\|_{\CH_{-\sigma}}.
$$
According to \cite[5.6.1.Theorem]{BL},  
we know that $\ell^1_1 = (\ell^{1-\frac{\sigma}{2}}_\infty,
\ell^{1+\frac{\sigma}{2}}_\infty)_{1/2, 1}$, 
where
$(\cdot, \cdot)_{\theta, q}$ denotes the real interpolation functor, and therefore we have 
\begin{equation}\label{L1est:1}
\int^\infty_0 e^{-\gamma t}\|\bar\nabla^2T_2(t)(f, \bg)\|_{B^s_{q,1}(\HS)}\,\d t
\leq C\|(f, \bg)\|_{(\CH_{+\sigma}, \CH_{-\sigma})_{1/2, 1}}
\leq C\|(f, \bg)\|_{B^{s+1}_{q,1}(\HS)\times B^s_{q,1}(\HS)}.
\end{equation}
Employing completely the same argument, by \eqref{6.21.4} we have
\begin{equation}\label{L1est:2}\begin{aligned}
\int^\infty_0 e^{-\gamma t}\|T_1(t)(f, \bg)\|_{B^{s+1}_{q,1}(\HS)}\,\d t
&\leq C\|(f, \bg)\|_{(\CH_{+\sigma}, \CH_{-\sigma})_{1/2, 1}}
\leq C\|(f, \bg)\|_{B^{s+1}_{q,1}(\HS)\times B^s_{q,1}(\HS)}.
\end{aligned}\end{equation}
\par

By equations \eqref{semi:1} with $f=0$ and $\bg=0$, we have
$$\pd_t \rho = -\eta_0^\epsilon(x)\dv\bu, \quad
\pd_t\bu =(\eta^\epsilon_0)^{-1}( \alpha\Delta\bu + \beta\nabla\dv\bu - \nabla(P'(\eta^\epsilon_0)\rho))
$$
 with  $\rho=T_1(t)(f, \bg)$ and 
$\bu = T_2(t)(f, \bg)$. Note that $\rho_1/2-\gamma_* < \tilde\eta^\epsilon_0(x) < \rho_2-\gamma_*$ as follows from
$\eta^\epsilon_0 = \gamma_* + \tilde\eta^\epsilon_0$, 
\eqref{assump:0} and \eqref{appro:1.1}.  By \eqref{L1est:1} and \eqref{L1est:2},  we have 
\begin{align*}
\int^\infty_0 e^{-\gamma t}
\|\pd_tT_1(t)(f,\bg)\|_{B^{s+1}_{q,1}(\HS)}\,\d t
&\leq C(\gamma_*+\|\tilde\eta_0\|_{B^{s+1}_{q,1}(\HS)})
\int^\infty_0 e^{-\gamma t}
\|\dv T_2(t)(f, \bg)\|_{B^{s+1}_{q,1}(\HS)}\,\d t \\
& \leq C(\gamma_*+\|\tilde\eta_0\|_{B^{s+1}_{q,1}(\HS)})
\|(f, \bg)\|_{\CH}, \\
\int^\infty_0 e^{-\gamma t}\|\pd_t T_2(t)(f, \bg)\|_{B^{s}_{q,1}(\HS)}\,\d t
& \leq  C(\gamma_*, \|\tilde\eta_0\|_{B^{N/q}_{q,1}})
\Bigl\{\int^\infty_0\|\nabla^2 T_2(t)(f, \bg)\|_{B^s_{q,1}(\HS)} \\
&\quad 
+ C(\gamma_*, \|\tilde\eta_0\|_{B^{s+1}_{q,1}(\HS)})
\int^\infty_0 \|T_1(t)(f, \bg)\|_{B^{s+1}_{q,1}(\HS)}\,\d t\Bigr\} \\
& \leq C(\gamma_*, \|\tilde\eta_0\|_{B^{s+1}_{q,1}(\HS)})
\|(f, \bg)\|_{\CH}.
\end{align*}
This completes the proof of Theorem \ref{thm:t.2}. \end{proof}
\begin{cor}\label{semi.1}
Let $1 < q < \infty$ and $T > 0$. Let $s$ be a number satisfying \eqref{assump:s}. 
  Let $\eta_0(x)=\gamma_* + \tilde\eta_0(x)$ be a 
function given in Theorem \ref{thm:1}.  Then, for any 
$(f, \bg) \in \CH$, $F\in L_1((0, T),  B^{s+1}_{q,1}(\HS))$ and 
$\bG \in L_1((0, T), B^s_{q,1}(\HS)^N)$, problem \eqref{semi:1} admits unique
solutions $\rho$ and $\bu$ with
$$\rho \in W^1_1((0, T), B^{s+1}_{q,1}(\HS)), \quad
\bu \in L_1((0, T), B^{s+2}_{q,1}(\HS)^N) \cap W^1_1((0, T), B^s_{q,1}(\HS)^N).
$$
Moreover,  there exist constants  $\gamma>0$ depending on $\gamma_*$, 
 $\|\tilde\eta_0\|_{B^{N/q}_{q,1}(\HS)}$ and $\|\nabla\tilde\eta^\epsilon_0\|_{B^{N/q}_{q,1}}$,
and 
$C$ depending on $\gamma_*$ and $\|\tilde\eta_0\|_{B^{s+1}_{q,1}(\HS)}$
such that $\rho$ and $\bu$ satisfy the following maximal
$L_1$-$\CH$ estimate:
\begin{align*}
\|(\pd_t, \bar\nabla^2)\bu\|_{L_1((0, T), B^s_{q,1}(\HS))}
+ \|(1, \pd_t)\rho\|_{L_1((0, T), B^{s+1}_{q,1}(\HS))} 
\leq Ce^{\gamma T}(\|(f, \bg)\|_{\CH}
+ \|(F, \bG)\|_{L_1((0, T), \CH)}).
\end{align*}
\end{cor}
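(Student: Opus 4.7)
The plan is to combine the semigroup representation with Duhamel's formula, using Theorem \ref{thm:t.1} for the generation of an analytic semigroup $\{T(t)\}_{t\geq 0}$ by $\CA$ and Theorem \ref{thm:t.2} for its $L_1$ maximal regularity. Uniqueness is immediate: if two solutions exist in the stated class, their difference solves the homogeneous problem with zero initial data and zero forcing, and hence vanishes by the analyticity of $T(t)$ together with the underlying representation $(\rho,\bu)(t) = T(t)(f,\bg)$.

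For existence I would define the candidate
\[
(\rho,\bu)(t) := T(t)(f,\bg) + \int_0^t T(t-\tau)(F(\tau),\bG(\tau))\,\d\tau,
\]
which is the natural mild solution. The contribution from the initial data is controlled directly by Theorem \ref{thm:t.2}: since $e^{-\gamma t}\ge e^{-\gamma T}$ for $t\in(0,T)$,
\[
\int_0^T \bigl(\|(\pd_t,\bar\nabla^2)T_2(t)(f,\bg)\|_{B^s_{q,1}} + \|(1,\pd_t)T_1(t)(f,\bg)\|_{B^{s+1}_{q,1}}\bigr)\,\d t \le Ce^{\gamma T}\|(f,\bg)\|_\CH.
\]

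For the inhomogeneous (convolution) part $u(t)=\int_0^t T(t-\tau)h(\tau)\d\tau$ with $h=(F,\bG)$, I would estimate the four quantities appearing on the left of the corollary by Fubini. For instance, writing $\CN(v):=\|\bar\nabla^2 v\|_{B^s_{q,1}}$,
\[
\int_0^T \CN(u_2(t))\,\d t \le \int_0^T \int_\tau^T \CN(T_2(t-\tau)h(\tau))\,\d t\,\d\tau \le \int_0^T \int_0^\infty e^{\gamma T} e^{-\gamma s}\CN(T_2(s)h(\tau))\,\d s\,\d\tau,
\]
and the inner integral is bounded by $Ce^{\gamma T}\|h(\tau)\|_\CH$ by Theorem \ref{thm:t.2}; the outer integral then gives $Ce^{\gamma T}\|(F,\bG)\|_{L_1((0,T),\CH)}$. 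The same argument handles $T_1(t-\tau)h(\tau)$, $\pd_t T_1(t-\tau)h(\tau)$, and the convolution piece of $\pd_t u_2$. Differentiating Duhamel gives $\pd_t u(t) = h(t) + \int_0^t \pd_t T(t-\tau)h(\tau)\d\tau$; the residual term $h(t)$ contributes $\|h\|_{L_1((0,T),\CH)}$ directly.

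Summing the homogeneous and inhomogeneous bounds produces the stated $L_1$-$\CH$ maximal regularity estimate. The regularity claims $\rho\in W^1_1((0,T),B^{s+1}_{q,1})$ and $\bu\in L_1((0,T),B^{s+2}_{q,1}^N)\cap W^1_1((0,T),B^s_{q,1}^N)$ then follow from the finiteness of the corresponding norms. I do not expect any real obstacle here; the content is essentially a packaging of Theorem \ref{thm:t.2} through the standard Duhamel/Fubini device, and the only subtle point is the exponential factor $e^{\gamma T}$ that appears when one converts the weighted $L_1(0,\infty)$ bound into an unweighted $L_1(0,T)$ bound---which is exactly the factor displayed in the statement.
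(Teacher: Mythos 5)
Your proposal matches the paper's proof in its essential architecture: Duhamel's formula, Fubini, and the elementary trick of bounding $e^{-\gamma t}\ge e^{-\gamma T}$ on $(0,T)$ to convert the weighted $L_1(0,\infty)$ bound of Theorem \ref{thm:t.2} into an unweighted $L_1(0,T)$ bound with the factor $e^{\gamma T}$. The only place you diverge is in handling the time derivatives: you formally differentiate the Duhamel integral, writing $\pd_t u(t)=h(t)+\int_0^t\pd_t T(t-\tau)h(\tau)\,\d\tau$, whereas the paper first establishes the $L_1$ bounds on $\rho$, $\bar\nabla^2\bu$ via Fubini and then recovers $\pd_t\rho$ and $\pd_t\bu$ by substituting back into the system, i.e. $\pd_t\rho=-\eta_0^\epsilon\dv\bu+F$ and $\pd_t\bu=(\eta_0^\epsilon)^{-1}(\alpha\Delta\bu+\beta\nabla\dv\bu-\nabla(P'(\eta_0^\epsilon)\rho)+\bG)$, together with the product estimates of Lemma \ref{lem:APH}. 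These are equivalent routes to the same bound, but the paper's version sidesteps the question of whether differentiation under the convolution integral is licit for $L_1$-in-time data (it is not, in general, without a density argument): once $Au\in L_1$ is known, $\pd_t u=h-Au$ is simply read off. Your argument would need a small approximation step to justify the pointwise differentiation of the Duhamel formula; apart from that, the content is the same.
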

\begin{proof} 
Let $F_0$ and $\bG_0$ be zero extension of $F$ and $\bG$ outside of $(0, T)$
interval.  Using $\{T(t)\}_{t\geq 0}$, we can write
$$(\rho, \bu) = T(t)(f, \bg) + \int^t_0T(t-s)(F_0, \bG_0)(s)\,ds.
$$
Let $\gamma$ and $C$ be the constant given in Theorem \ref{thm:t.2}. 
By Fubini's theorem, we have
\begin{align*}
&\int^\infty_0e^{-\gamma t}\|\bar\nabla^2
\int^t_0T_2(t-\ell)(F_0, \bG_0)(\ell)\,\d\ell\|_{B^s_{q,1}(\HS)}\, \d t \\
&\quad \leq \int^\infty_0\Bigl\{ \int^\infty_\ell e^{-\gamma t}
\|\bar\nabla^2T(t-\ell)(F_0, \bG_0)(\ell)\|_{B^s_{q,1}(\HS)}\,\d t\Bigr\}\,\d \ell \\
&\quad = \int^\infty_0e^{-\gamma \ell}
\Bigl\{ \int^\infty_0 e^{-\gamma t}
\|\bar\nabla^2 T(t)(F_0, \bG_0)(\ell)\|_{B^{s}_{q,1}(\HS)}\,\d t\Bigr\}\,\d \ell \\
&\quad \leq C\int^\infty_0 e^{-\gamma \ell} \|(F_0(\cdot, \ell), \bG_0(\cdot, \ell)\|_{\CH}\,\d \ell \\
&\quad \leq C\|(F, \bG)\|_{L_1((0, T), \CH)}.
\end{align*}
Completely the same argument, we have
$$\int^\infty_0e^{-\gamma t}\|
\int^t_0T_1(t-\ell)(F_0, \bG_0)(\ell)\,\d \ell\|_{B^{s+1}_{q,1}(\HS)}\, \d t 
\leq C\|(F, \bG)\|_{L_1((0, T), \CH)}.
$$
Therefore, we have
$$\int^\infty_0 e^{-\gamma t}(\|\rho(\cdot, t)\|_{B^{s+1}_{q,1}(\HS)}
+ \|\bu(\cdot, t)\|_{B^{s+2}_{q,1}(\HS)})\,dt
\leq C(\|(f, \bg)\|_{\CH} + \|(F, \bG)\|_{L_1((0, T), \CH)}), 
$$
which implies that
$$e^{-\gamma T}\int^T_0(\|\rho(\cdot, t)\|_{B^{s+1}_{q,1}(\HS)}
+ \|\bu(\cdot, t)\|_{B^{s+2}_{q,1}(\HS)})\,dt
\leq C(\|(f, \bg)\|_{\CH} + \|(F,  \bG)\|_{L_1((0, T), \CH)}).
$$
Therefore, we have
$$\int^T_0(\|\rho(\cdot, t)\|_{B^{s+1}_{q,1}(\HS)}
+ \|\bu(\cdot, t)\|_{B^{s+2}_{q,1}(\HS)})\,dt
\leq Ce^{\gamma T}(\|(f, \bg)\|_{\CH} + \|(F, \bG)\|_{L_1((0, T), \CH)}).
$$
Here, $\gamma$ is a constant depending on $\gamma_*$, 
$\|\tilde\eta_0\|_{B^{N/q}_{q,1}(\HS)}$ and $\|\tilde\eta^\epsilon_0\|_{B^{N/q}_{q,1}(\HS)}$,
and $C$ is a constant depending on $\gamma_*$, and 
$\|\tilde\eta_0\|_{B^{N/q}_{q,1}(\HS)}$. \par 
To show the estimate of time derivatives, we use the relations:
\begin{align*}
\pd_t\rho& = -\eta_0^\epsilon\dv\bu + F, \\
\pd_t\bu& =(\eta^\epsilon_0)^{-1}(\alpha\Delta\bu + \beta\nabla\dv\bu - 
\nabla(P(\eta_0^\epsilon)\rho)+ \bG),
\end{align*}
and then, 
\begin{align*}
&\int^T_0(\|\pd_t\rho(\cdot, t)\|_{B^{s}_{q,1}(\HS)}
+ \|\pd_t\bu(\cdot, t)\|_{B^{s}_{q,1}(\HS)})\,dt \\
&\leq C(\gamma_*, \|\tilde\eta_0\|_{B^{s+1}_{q,1}})(
\int^T_0(\|\rho(\cdot, t)\|_{B^{s+1}_{q,1}(\HS)}
+ \|\bar\nabla^2\bu(\cdot, t)\|_{B^{s}_{q,1}(\HS)})\,dt
+ \|(F, \bG)\|_{L_1((0, T), \CH)}) \\
& \leq C(\gamma_*, \|\tilde\eta_0\|_{B^{s+1}_{q,1}})e^{\gamma T}
(\|(f, \bg)\|_{\CH} + \|(F, \bG)\|_{L_1((0, T), \CH)}).
\end{align*}
Noting that $N/q \leq s+1$, we see that $C$ depends on $\gamma_*$ and 
$\|\tilde\eta_0\|_{B^{s+1}_{q,1}(\HS)}$.  Thus, we have obtained  Corollary \ref{semi.1}.

\end{proof}


\section{A proof of Theorem \ref{thm:2}}

In this section, we shall prove Theorem \ref{thm:2}. 
In what follows, we assume that $\theta_0 \in B^{s+1}_{q,1}(\HS)$ 
and $\bu_0 \in B^s_{q,1}(\HS)^N$, which satisfy the compatibility
condition: $\bu_0|_{\pd\HS} =0$. Let $\tilde\eta^\epsilon_0$ be an element 
of $ 
B^{s+1}_{q,1}(\HS) \cap B^{N/q+1}_{q,1}(\HS)$ such that 
\begin{equation}\label{appro:1} 
\lim_{\epsilon\to0}
\|\tilde\eta^\epsilon_0- \tilde \eta_0\|_{B^{s+1}_{q,1}(\HS)}= 0.
\end{equation}
We divide equations \eqref{ns:2} into linear parts and nonlinear parts 
by setting $\rho = \theta_0 + \theta$.  Moreover, we write $\rho = \theta_0 + \theta
= \eta^\epsilon_0 + \theta_0-\eta^\epsilon_0 + \theta$.
The resultan equations read
\begin{equation}\label{appro:ns.1}\left\{\begin{aligned}
\pd_t\theta+\eta^\epsilon_0\dv\bu = (\eta^\epsilon_0-\theta_0-\theta)\dv\bu 
 + F(\theta+\theta_0, \bu)& 
&\quad&\text{in $\HS\times(0, T)$}, \\
\eta^\epsilon_0 \pd_t\bu - \alpha\Delta \bu  -\beta\nabla\dv\bu 
+  \nabla (P'(\eta^\epsilon_0)\theta)
 =-\nabla P(\theta_0)   
 + \bG(\theta+\theta_0, \bu) + \tilde\bG(\theta, \bu) && \quad&\text{in $\HS\times(0, T)$}, \\
\bu|_{\pd\HS} =0, \quad (\theta, \bu)|_{t=0} = (0, \bu_0)&
& \quad&\text{in $\HS$},
\end{aligned}\right.\end{equation}
where we have set $\tilde\bG(\theta, \bu) 
 = (\eta^\epsilon_0 - \theta_0)\pd_t\bu  - \nabla( P(\theta_0+\theta)-P(\theta_0) 
-P'(\eta^\epsilon_0)\theta)$. 

To prove Theorem \ref{thm:2}, we use  the Banach contraction
mapping principle. To this end, we introduce an energy functional
$E_T$ and an underlying space $S_{T, \omega}$ defined by 
\begin{align*}
E_T&(\eta, \bw)  = \|(\eta, \pd_t\eta)\|_{L_1((0, T), B^{s+1}_{q,1}(\HS))}
+ \|\bw\|_{L_1((0, T), B^{s+2}_{q,1}(\HS))}
+ \|\pd_t\bw\|_{L_1((0, T), B^s_{q,1}(\HS))}, \\[0.5pc]
S_{T, \omega}  &= \left\{ 
(\eta, \bw) \, \left | 
\begin{aligned} \eta &\in W^1_1((0, T), B^{s+1}_{q,1}(\HS)), \enskip 
\bw \in L_1((0, T), B^{s+2}_{q,1}(\HS)^N)
\cap W^1_1((0, T), B^s_{q,1}(\HS)^N) \\
(\eta, &\bw)|_{t=0} = (0, \bu_0), \quad E_T(\eta, \bw) \leq \omega,
\quad \int^T_0\|\nabla \bw(\cdot, \tau)\|_{B^{N/q}_{q,1}(\HS)}\,\d\tau \leq c_1
\end{aligned}
\right.\right\}.
\end{align*}
Here, $T>0$, $\omega>0$ and $c_1>0$  are small constants 
chosen later. \par
Given $(\theta, \bu) \in S_{T, \omega}$, let $\eta$ and $\bw$ be solutions
to the system of linear equations:
\begin{equation}\label{st:2}\left\{\begin{aligned}
\pd_t\eta+\eta^\epsilon_0\dv\bw =
(\eta^\epsilon_0 - \theta_0-\theta)\dv\bu + F(\theta_0+\theta, \bu)& 
&\quad&\text{in $\HS\times(0, T)$}, \\
\eta_0^\epsilon\pd_t\bw - \alpha\Delta \bw  -\beta\nabla\dv\bw
+  \nabla(P'(\eta^\epsilon_0) \eta) = -\nabla P(\theta_0) + \bG(\theta_0+\theta, \bu)
+\tilde\bG(\theta, \bu)
& &\quad&\text{in $\HS\times(0, T)$}, \\
\bw|_{\pd\HS} =0, \quad (\eta, \bw)|_{t=0} = (0, \bu_0)
& &\quad&\text{in $\HS$}.
\end{aligned}\right.\end{equation}
Let $\eta_\ba$ and $\bw_\ba$ be solutions of the system of linear equations:
\begin{equation}\label{st:3}\left\{\begin{aligned}
\pd_t\eta_\ba+\eta^\epsilon_0\dv\bw_\ba =0&
&\quad&\text{in $\HS\times(0, \infty)$}, \\
\eta^\epsilon_0\pd_t\bw_\ba - \alpha\Delta \bw_\ba   -\beta\nabla\dv\bw_\ba
+  \nabla(P'(\eta^\epsilon_0) \eta_\ba) = -\nabla P(\theta_0) 
& &\quad&\text{in $\HS\times(0, \infty)$}, \\
\bw_\ba|_{\pd\HS} =0, \quad (\eta_\ba, \bw_\ba)|_{t=0} = (0, \bu_0)
& &\quad&\text{in $\HS$}.
\end{aligned}\right.\end{equation}
We will choose $T>0$ small enough later, and so for a while we assume that
$0 < T < 1$. 
By Corollary \ref{semi.1}, we know the unique existence 
of solutions $\eta_\ba$ and $\bw_\ba$ satisfying the regularity conditions:
$$\eta_\ba \in W^1_1((0, 1), B^{s+1}_{q,1}(\HS)), 
\quad 
\bw_\ba \in L_1((0, 1), B^{s+2}_{q,1}(\HS)^N)
\cap W^1_1((0, 1), B^s_{q,1}(\HS)^N)
$$ 
as well as the estimates:
\begin{equation}\label{est:2}\begin{aligned}
&\|(\eta_\ba, \pd_t\eta_\ba)\|_{L_1((0, 1), B^{s+1}_{q,1}(\HS))}
+ \|(\pd_t, \bar\nabla^2)\bw_\ba\|_{L_1((0, 1), B^s_{q,1}(\HS))} 
\\
&\quad \leq Ce^{\gamma}(\|\bu_0\|_{B^s_{q,1}(\HS)} + \|\nabla P(\theta_0)\|_{B^s_{q,1}(\HS)}).
\end{aligned}\end{equation}
Here, $\gamma$ is a constant depending on $\gamma_*$, 
 $\|\tilde\eta_0\|_{B^{s+1}_{q,1}(\HS)}$, and $\|\nabla\tilde\eta^\epsilon_0\|_{B^{N/q}_{q,1}
(\HS)}$ 
given in Corollary \ref{semi.1}. Here and in the following, $C$ denotes a  general constant
depending at most on $\gamma_*$ and $\|\tilde\eta_0\|_{B^{s+1}_{q,1}(\HS)}$, 
which is changed from line to line,  
but independent of $\epsilon$.   \par
In view of \eqref{est:2},  $\eta_\ba$ and $\bw_\ba$ satisfy $E_1(\eta_\ba, \bw_\ba) < \infty$, 
 and so we choose $T \in (0, 1)$ small enough in such a way that
\begin{equation}\label{est:5}
E_T(\eta_\ba, \bw_\ba) \leq \omega/2.
\end{equation}
Let $\rho$ and $\bv$ be solutions to the system of linear equations:
\begin{equation}\label{st:40}\left\{\begin{aligned}
\pd_t\rho+\eta^\epsilon_0\dv\bv =(\eta^\epsilon_0-\theta_0)\dv\bu  -\theta\dv\bu +
F(\theta+\theta_0, \bu)& 
&\quad&\text{in $\HS\times(0, T)$}, \\
\eta^\epsilon_0\pd_t\bv- \alpha\Delta \bv-\beta\nabla\dv\bv
+  \nabla(P'(\eta^\epsilon_0) \rho) = \bG(\theta+\theta_0, \bu) + \tilde\bG(\theta, \bu)
& &\quad&\text{in $\HS\times(0, T)$}, \\
\bv|_{\pd\HS} =0, \quad (\rho, \bv)|_{t=0} = (0, 0)
& &\quad&\text{in $\HS$}.
\end{aligned}\right.\end{equation}
Applying Corollary \ref{semi.1}, 
we see the existence of solutions $\rho$ and $\bv$ of equations
\eqref{st:40} satisfying the regularity condition:
$$\rho \in W^1_1((0, T), B^{s+1}_{q,1}(\HS)), 
\quad
\bv \in L_1((0, T), B^{s+2}_{q,1}(\HS)^N) \cap W^1_1((0, T), B^{s}_{q,1}(\HS)^N)
$$
as well as the estimate:
\begin{equation}\label{est:3}\begin{aligned}
E_T(\rho, \bv) \leq Ce^{\gamma T}(&\|(\eta^\epsilon_0-\theta_0)\dv\bu, \theta\dv\bu,
F(\theta+\theta_0, \bu)\|_{L_1((0, T), B^{s+1}_{q,1}(\HS))} \\
&+ \|(\bG(\theta+\theta_0, \bu), \tilde\bG(\theta, \bu)) 
\|_{L_1((0, T), B^s_{q,1}(\HS))}).
\end{aligned}\end{equation}
Here, we notice that $\gamma$ depends on $\epsilon$ but $C$ is independent of $\epsilon$
 again. 
\par

Now, we  shall show that there exist  constants $C>0$ and $\epsilon >0$ such that 
\begin{equation}\label{est:4}\begin{aligned}
\|(\eta^\epsilon_0-\theta_0)\dv\bu, \theta\dv\bu,
F(\theta+\theta_0, \bu)\|_{L_1((0, T), B^{s+1}_{q,1}(\HS))}
&+ \|(\bG(\theta+\theta_0, \bu), \tilde\bG(\theta, \bu)) 
\|_{L_1((0, T), B^s_{q,1}(\HS))}  \\
&\leq C(\omega^2 + \omega^3).
\end{aligned}\end{equation}
If we show \eqref{est:4}, then by \eqref{est:3} we have
\begin{equation}\label{est:7}
E_T(\rho, \bv) \leq Ce^{\gamma T}(\omega^2 + \omega^3).
\end{equation}
Choose $\epsilon > 0$ and $\omega>0$  so small that $Ce(\omega + \omega^2) \leq 1/2$, 
and so $\gamma$ is fixed.  Next, we choose $T>0$ so small that $\gamma T\leq 1$.  Then, 
we have
\begin{equation}\label{est:6}
E_T(\rho, \bv) < \omega/2,
\end{equation}
which, combined with \eqref{est:5}, implies that 
$\eta = \eta_\ba +  \rho$ and $\bw =\bw_\ba + \bv$ satisfy
equations \eqref{st:2} and $E_T(\eta, \bw) < \omega$.
Especially, $\omega$ is chosen so small that 
$$\int^T_0\|\nabla\bw(\cdot, \tau)\|_{B^{N/q}_{q,1}(\HS)}\,\d\tau 
\leq CE_T(\eta, \bw) \leq C\omega \leq c_1.$$
As a consequence, $(\eta, \bw) \in S_{T, \omega}$. 
Thus,  if we define the map $\Phi$ by $\Phi(\theta, \bu) = (\eta, \bw)$,
then  $\Phi$ maps $S_{T, \omega}$ into $S_{T, \omega}$. \par

Now, we shall show \eqref{est:4}.  For notational simplicity, we omit $\HS$ below. 
Notice that $B^{N/q}_{q,1}$ is a Banach algebra (cf. \cite[Proposition 2.3]{H11}). 
By Lemma \ref{lem:APH} and 
the assumption: $N/q\leq  s+1$, we see that $B^{s+1}_{q,1}$ is also a Banach
algebra.  In fact, 
$$\|uv\|_{B^{s+1}_{q,1}} \leq \|(\nabla u)v\|_{B^s_{q,1}} + \|u\bar\nabla v\|_{B^s_{q,1}}
\leq C(\|\nabla u\|_{B^s_{q,1}}\|v\|_{B^{N/q}_{q,1}} +
\|u\|_{B^{N/q}_{q,1}}\|\bar\nabla v\|_{B^s_{q,1}})
\leq C\|u\|_{B^{s+1}_{q,1}}\|v\|_{B^{s+1}_{q,1}}.
$$
\par
We first estimate $(\eta_0^\epsilon-\theta_0-\theta)\dv\bu$ and 
$F(\theta+\theta_0, \bu)$.  Write $(\eta_0^\epsilon-\theta_0)
= \tilde\eta_0^\epsilon -\tilde\eta_0 + \eta_0-\theta_0$  
and choose $\epsilon >0$ and $\sigma>0$ so small that 
\begin{equation}\label{smalldist:1}
\|\eta^\epsilon_0 - \eta_0\|_{B^{s+1}_{q,1}} \leq \omega, \quad 
\|\eta_0 - \theta_0\|_{B^{s+1}_{q,1}} \leq \omega.
\end{equation}
Then, by Lemma \ref{lem:APH}, we have
\begin{equation}\label{nonest:1}
\|(\eta_0^\epsilon-\theta_0)\dv\bu\|_{B^{s+1}_{q,1}} \leq C\omega\|\bu\|_{B^{s+2}_{q,1}}.
\end{equation}
Since $B^{s+1}_{q,1}$ is a Banach algebral, we have
$$\|\theta\dv\bu\|_{B^{s+1}_{q,1}} \leq C\|\theta\|_{B^{s+1}_{q,1}}\|\dv\bu\|_{B^{s+1}_{q,1}}.$$
Since $\theta|_{t=0} = 0$, we observe that 
\begin{equation}\label{theta:1}
\|\theta\|_{B^{s+1}_{q,1}} \leq C\|\pd_t\theta\|_{L_1((0, T), B^{s+1}_{q,1})}.
\end{equation}
Thus, we have
$$\|\theta\dv\bu\|_{L_1((0, T), B^{s+1}_{q,1})} \leq C \|\pd_t\theta\|_{L_1((0, T), B^{s+1}_{q,1})}
\|\bu\|_{L_1((0, T), B^{s+2}_{q,1})}.
$$
We next estimate $F(\theta_0+\theta, \bu) 
= (\theta_0+\theta)((\BI - \BA_\bu):\nabla\bu)$. 
Recall that $\bu$ satisfies
\begin{equation}\label{defin:1}
\int^T_0\|\nabla\bu(\cdot, \tau)\|_{B^{N/\beta}_{q,1}}\, \d\tau \leq c_1.
\end{equation}
Since 
$B^{N/q}_{q,1} \subset L_\infty$, we have
\begin{equation}\label{defin:2}
\sup_{t \in (0, T)}\,\Bigl\|\int^t_0 \nabla \bu(\cdot, \tau)\,\d\tau\Bigr\|_{L_\infty}
\leq C\int^T_0\|\nabla \bu(\cdot, \tau)\|_{B^{N/\beta}_{q,1}}\,\d\tau 
\leq Cc_1.
\end{equation}
Choosing $c_1$ so small that $Cc_1 < 1$. 
Let $F(\ell)$ be a $C^\infty$ function defined on $|\ell| \leq Cc_1$ and 
$F(0)=0$, and 
$\BI-\BA_\bu = F(\int^t_0\nabla\bu\,\d\ell)$. In fact, 
$F(\ell) = -\sum_{j=1}^\infty \ell^j$.
Then, by Lemma \ref{lem:Hasp} and \eqref{defin:2}, we have
\begin{equation}\label{nonfun:1}
\sup_{t \in (0, T)} \|F(\int^t_0\nabla\bu\,\d\tau)\|_{B^{s+1}_{q,1}}
\leq C\int^T_0\|\nabla\bu(\cdot, \tau)\|_{B^{s+1}_{q,1}}\,d\tau.
\end{equation}
  Since $B^{s+1}_{q,1}$ is a Banach algebra, using \eqref{nonfun:1} we have
\begin{align*}
\|F(\theta_0+\theta, \bu)\|_{B^{s+1}_{q,1}}
\leq C(\|\theta_0\|_{B^{s+1}_{q,1}}+\|\theta(\cdot, t)\|_{B^{s+1}_{q,1}})
\|\bu\|_{L_1((0, T), B^{s+2}_{q,1})}\|\nabla\bu(\cdot, t)\|_{B^{s+1}_{q,1}}.
\end{align*}
Using \eqref{theta:1}, we have
$$\|F(\theta_0+\theta, \bu)\|_{L_1((0, T), B^{s+1}_{q,1})}
\leq C(\|\theta_0\|_{B^{s+1}_{q,1}}+\|\theta_t\|_{L_1((0, T), B^{s+1}_{q,1})})
\|\bu\|_{L_1((0, T), B^{s+2}_{q,1})}^2.
$$
Summing up, we have proved that 
\begin{equation}\label{mainest:1}\begin{aligned}
&\|(\eta^\epsilon_0-\theta_0)\dv\bu, \theta\dv\bu,
F(\theta+\theta_0, \bu)\|_{L_1((0, T), B^{s+1}_{q,1}(\HS))} \\
&\quad \leq C\{\omega \|\bu\|_{L_1((0, T), B^{s+2}_{q,1})}
+ \|\pd_t\theta\|_{L_1((0, T), B^{s+1}_{q,1})}\|\bu\|_{L_1((0, T), B^{s+2}_{q,1})} \\
&\quad +(\|\eta_0\|_{B^{s+1}_{q,1}}+1)\|\bu\|_{L_1((0, T), B^{s+2}_{q,1})}^2
+ \|\pd_t\theta\|_{L_1((0, T), B^{s+1}_{q,1})}\|\bu\|_{L_1((0, T), B^{s+2}_{q,1})}^2\}.
\end{aligned}\end{equation}
Here and in the following, we use the estimate: 
$$\|\theta_0\|_{B^{s+1}_{q,1}} \leq \|\theta_0-\eta_0\|_{B^{s+1}_{q,1}} + \|\eta_0\|_{B^{s+1}_{q,1}})
\leq 1+ \|\eta_0\|_{B^{s+1}_{q,1}}.
$$

Next, we estimate $\|(\bG(\theta+\theta_0, \bu), \tilde\bG(\theta, \bu)) 
\|_{L_1((0, T), B^s_{q,1}(\HS))}$. 
By Lemma \ref{lem:APH}, the assumption: $N/d \leq s+1$,  \eqref{theta:1}, and 
\eqref{nonfun:1}, 
we have
\begin{align*}
\|(\BI-\BA_\bu)(\theta_0+\theta)\pd_t\bu\|_{B^s_{q,1}}
&\leq C\|\BI-\BA_\bu\|_{B^{N/q}_{q,1}}\|\theta_0+\theta\|_{B^{N/q}_{q,1}}
\|\pd_t\bu\|_{B^s_{q,1}} \\
& \leq C\|\bu\|_{L_1((0, T), B^{s+2}_{q,1})}(\|\theta_0\|_{B^{s+1}_{q,1}}
+ \|\pd_t\theta\|_{L_1((0, T), B^{s+1}_{q,1}})\|\pd_t\bu\|_{B^s_{q,1}}, \\
\|(\BA_u^{-1}-\BI)\dv(\BA_\bu\BA_\bu^\top:\nabla\bu)\|_{B^s_{q,1}}
&\leq C\|\BA_u^{-1}-\BI\|_{B^{N/q}_{q,1}}(\|\dv \nabla\bu\|_{B^{s}_{q,1}}
+ \|(\BA_\bu\BA_\bu^\top-\BI):\nabla\bu\|_{B^{s+1}_{q,1}})\\
& \leq C(\|\bu\|_{L_1((0, T), B^{s+2}_{q,1})}(1
+ \|\bu\|_{L_1((0, T), B^{s+2}_{q,1})})\|\bu\|_{B^{s+2}_{q,1}}).
\end{align*}
Therefore, we have 
\begin{equation}\label{mainest:2}\begin{aligned}
&\|(\bG(\theta+\theta_0, \bu)\|_{L_1((0, T), B^s_{q,1})} \\
&\quad \leq 
C( \|\bu\|_{L_1((0, T), B^{s+2}_{q,1})}(\|\theta_0\|_{B^{s+1}_{q,1}}
+ \|\pd_t\theta\|_{L_1((0, T), B^{s+1}_{q,1}})\|\pd_t\bu\|_{L_1((0, T), B^s_{q,1})}\\
&\quad +
\|\bu\|_{L_1((0, T), B^{s+2}_{q,1})}(1
+ \|\bu\|_{L_1((0, T), B^{s+2}_{q,1})})\|\bu\|_{L_1((0, T), B^{s+2}_{q,1}}).
\end{aligned}\end{equation}

Next, we shall estimate 
$\tilde\bG(\theta, \bu) 
 = (\eta^\epsilon_0 - \theta_0)\pd_t\bu  + \nabla( P(\theta_0+\theta)-P(\theta_0) 
-P'(\eta^\epsilon_0)\theta)$. Using Lemma \ref{lem:APH} and  \eqref{smalldist:1},   we have
\begin{align*}
\|(\eta^\epsilon_0 - \theta_0)\pd_t\bu\|_{B^s_{q,1}}
&\leq C\|\eta^\epsilon_0 -\theta_0\|_{B^{N/q}_{q,1}}
\|\pd_t\bu\|_{B^s_{q,1}} \leq C\omega\|\pd_t\bu\|_{B^s_{q,1}}.
\end{align*}
To estimate the second term, we write 
\begin{align*}
&P(\theta_0+\theta) - P(\theta_0) - P'(\eta_0^\epsilon)\theta \\
&\quad = P(\theta_0+\theta)-P(\theta_0) -P'(\theta_0)\theta 
+ (P'(\theta_0)-P'(\eta_0^\epsilon))\theta\\
&\quad =  \int^1_0(1-\ell)P''(\theta_0+\ell\theta)\,\d\ell\theta^2 
+ \int^1_0P''(\eta_0^\epsilon + \ell(\theta_0-\eta^\epsilon_0))\,\d\ell (\theta_0-\eta^\epsilon_0)\theta.
\end{align*}
Write $\theta_0+\ell\theta = \eta_0 + \theta_0-\eta_0 + \ell\theta$. By \eqref{smalldist:1}
and $E_T(\theta, \bu) < \omega$,  we see that
$$\|\theta_0-\eta_0 + \ell\theta\|_{L_\infty} 
\leq C\|\theta_0-\eta_0\|_{B^{s+1}_{q,1}} + \ell\|\pd_t\theta\|_{L_1((0, T), B^{s+1}_{q,1})}
\leq C\omega
$$
for $\ell \in (0, 1)$. 
In view of \eqref{assump:0}, we choose  $\omega$ so small that
$$
\rho_1/2 < \eta_0 + \theta_0-\eta_0 +\ell\theta\leq 2\rho_2
$$
for any $\ell \in (0, 1)$. Recalling that $\eta_0 = \gamma_* + \tilde\eta_0$, we have
\begin{equation}\label{range:1}
\rho_1/2-\gamma_* <  \tilde\eta_0 + \theta_0-\eta_0+\ell\theta \leq 2\rho_2 -\gamma_*
\end{equation}
for any $\ell \in (0, 1)$.  From this  observation, we write
\begin{align*}
&\int^1_0(1-\ell)P''(\theta_0+\ell\theta)\,\d\ell\theta^2 \\
&= \int^1_0\int^1_0(1-\ell)P'''(\gamma_* + m(\tilde \eta_0 + \theta_0-\eta_0 +\ell\theta)
(\tilde\eta_0+\theta_0-\eta_0 + \ell\theta)\,\d m\d\ell\, \theta^2
+ \frac12 P''(\gamma_*)\theta^2.
\end{align*}

 And also, we write $\eta^\epsilon_0 + \ell(\theta_0-
\eta^\epsilon_0) = \eta_0 + (1-\ell)(\eta^\epsilon_0 -\eta_0) + \ell(\theta_0-
\eta_0)$ and  observe that 
\begin{equation}\label{domain:4.1}
\|\eta^\epsilon_0-\eta_0 + \ell(\theta_0-
\eta^\epsilon_0)\|_{L_\infty}
\leq C((1-\ell)\| \eta^\epsilon_0-\eta_0 \|_{B^{s+1}_{q,1}} + \ell\|\theta_0-\eta_0\|_{B^{s+1}_{q,1}})
\end{equation}
for any $\ell \in (0, 1)$, In view of \eqref{smalldist:1}, we choose
$\omega$ so small that 
$$\rho_1/2 < \eta^\epsilon_0 + \ell(\theta_0-\eta^\epsilon_0) < 2\rho_2$$
for any $\ell \in (0, 1)$ as follows from Assumption \eqref{assump:0}, we have
\begin{equation}\label{range:2}
\rho_1/2-\gamma_* <  \tilde\eta^\epsilon_0 + \ell(\theta_0-\eta^\epsilon_0) < 2\rho_2-\gamma_*
\end{equation}
for any $\ell \in (0, 1)$.  From this observation, we write 
\begin{align*}
&\int^1_0P''(\eta_0^\epsilon + \ell(\theta_0-\eta^\epsilon_0))\,\d\ell (\theta_0-\eta^\epsilon_0)\theta
\\
& = \int^1_0\int^1_0 P'''(\gamma_* + m(\tilde\eta_0^\epsilon
+  \ell(\theta_0-\eta^\epsilon_0))(\tilde\eta_0^\epsilon
+  \ell(\theta_0-\eta^\epsilon_0)
 \,\d\ell \d m (\theta_0-\eta^\epsilon_0)\theta
+ P''(\gamma_*)(\theta_0-\eta^\epsilon_0)\theta.
\end{align*}
Therefore, by Lemmas \ref{lem:APH} and \ref{lem:Hasp}, we have
\begin{align*}
\|\nabla( P(\theta_0+\theta)-P(\theta_0) 
-P'(\eta^\epsilon_0)\theta)\|_{B^s_{q,1}}
\leq C(\gamma_*, \|\tilde\eta_0\|_{B^{s+1}_{q,1}})( \|\theta\|_{B^{s+1}_{q,1}}^2 +\|\theta_0-\eta^\epsilon_0\|_{B^{s+1}_{q,1}}
\|\theta\|_{B^{s+1}_{q,1}}).
\end{align*}


Putting these estimates together and using \eqref{smalldist:1} and \eqref{theta:1}, we have
\begin{equation}\label{mainest:3}\begin{aligned}
\|\tilde\bG(\theta, \bu)\|_{L_1((0, T), B^s_{q,1})} 
&\leq C(\gamma_*, \|\tilde\eta_0\|_{B^{s+1}_{q,1}})\{
\omega(\|\pd_t\bu\|_{L_1((0, T), B^s_{q,1})}+\|\theta\|_{L_1((0, T), B^{s+1}_{q,1})}) \\
&\quad + \|\pd_t\theta\|_{L_1((0, T), B^{s+1}_{q,1})}\|\theta\|_{L_1((0, T), B^{s+1}_{q,1})}\}.
\end{aligned}\end{equation}

Combining \eqref{mainest:1}, \eqref{mainest:2}, \eqref{mainest:3}
and recalling that $E_T(\theta, \bu) \leq \omega$, we have \eqref{est:4}.
And so, first  choosing $\epsilon > $0, $\sigma>0$, and $\omega > 0$  so small that
 $Ce(\omega+\omega^2) \leq 1/2$, and then choosing $T>0$ small enough 
to control the largeness 
of $\|\nabla\tilde\eta^\epsilon_0\|_{B^{N/q}_{q,1}}$,
 that is  $\gamma T\leq 1$,
we have  \eqref{est:6}. 
Here, $C$ depends on $\gamma_*$ and $\|\tilde\eta_0\|_{B^{s+1}_{q,1}}$, 
and so the smallness of $\omega$, $\sigma>0$,  and $\epsilon > 0$ depends on $\gamma_*$ and 
$\|\tilde\eta_0\|_{B^{s+1}_{q,1}}$, and  the smallness of $T>0$ depends on $\gamma_*$, 
$\|\tilde\eta_0\|_{B^{s+1}_{q,1}(\HS)}$ and $\|\nabla\eta^\epsilon_0\|_{B^{N/q}_{q,1}}$
after choosing $\epsilon > 0$, $\sigma>0$, and $\omega>0$. 
 Therefore, we see that 
$\Phi$ maps $S_{T, \omega}$ into itself.  \par

We now prove that $\Phi$ is contractive.  To this end, 
pick up two elements $(\theta_i, \bu_i) \in S_{T, \omega}$
($i=1,2$), and let $(\eta_i, \bw_i) = \Phi(\theta_i, \bu_i) \in S_{T, \omega}$ be solutions of 
equations \eqref{st:2} with $(\theta, \bu) = (\theta_i, \bu_i)$. 
Let  
\begin{align*}
\Theta &= \eta_1-\eta_2, \quad 
\bU = \bw_1-\bw_2, \\
\BF&=(\eta^\epsilon_0-\theta_0)\dv(\bu_1-\bu_2)
-(\theta_1\dv\bu_1-\theta_2\dv\bu_2) + F(\theta_1+\theta_0, \bu_1) - F(\theta_2+\theta_0, \bu_2), \\
\BG &= \bG(\theta_1+\theta_0, \bu_1) - \bG(\theta_2+\theta_0, \bu_2) 
+(\eta^\epsilon_0 -\theta_0)\pd_t(\bu_1-\bu_2)\\
&+ \nabla(P(\theta_0+\theta_1)-P(\theta_0+\theta_2))
-\nabla(P'(\eta^\epsilon_0)(\theta_1-\theta_2)).
\end{align*}
Notice that   $\Theta$ and $\bU$ satisfy equations:
\begin{equation}\label{diff:1}\left\{\begin{aligned}
\pd_t\Theta+\gamma\dv\bU =\BF& 
&\quad&\text{in $\HS\times(0, T)$}, \\
\pd_t\bU - \alpha\Delta \bU  -\beta\nabla\dv\bU
+  \gamma\nabla \Theta = \BG
& &\quad&\text{in $\HS\times(0, T)$}, \\
\bU|_{\pd\HS} =0, \quad (\Theta, \bU)|_{t=0} = (0, 0)
& &\quad&\text{in $\HS$}.
\end{aligned}\right.\end{equation}
From \eqref{est:3}, it follows that 
\begin{equation}\label{diff:2}
E_T(\eta_1-\eta_2, \bw_1-\bw_2) \leq Ce^{\gamma T}(\|\BF\|_{L_1((0, T), B^{s+1}_{q,1})}
+ \|\BG\|_{L_1((0, T), B^{s+2}_{q,1})}).
\end{equation}
We shall prove that 
\begin{equation}\label{diff:3}
\|\BF\|_{L_1((0, T), B^{s+1}_{q,1})}
+ \|\BG\|_{L_1((0, T), B^{s+2}_{q,1})}
\leq C(\omega+\omega^2) E_T(\theta_1-\theta_2, \bu_1-\bu_2).
\end{equation}
We start with estimating $\BF$.  Recall that $B^{N/q}_{q,1}$ and 
$B^{s+1}_{q,1}$ are Banach algebra. 
By Lemma \ref{lem:APH} and \eqref{smalldist:1}
\begin{align*}\|(\eta^\epsilon_0-\theta_0)\dv(\bu_1-\bu_2)\|_{L_1((0, T), B^{s+1}_{q,1})}
&\leq C\|\eta^\epsilon_0-\theta_0\|_{B^{s+1}_{q,1}}\|\bu_1-\bu_2\|_{L_1((0, T), B^{s+2}_{q,1})}
\\
&\leq C\omega \|\bu_1-\bu_2\|_{L_1((0, T), B^{s+2}_{q,1})}.
\end{align*}
Writing $\theta_1\dv\bu_1-\theta_2\dv\bu_2= (\theta_1-\theta_2)\dv\bu_1+ \theta_2(\dv\bu_1-\dv\bu_2)$
and using Lemma \ref{lem:APH} and \eqref{theta:1} gives 
\begin{align*}
&\|\theta_1\dv\bu_1-\theta_2\dv\bu_2\|_{B^{s+1}_{q,1}} \leq C(\|\dv\bu_1\|_{B^{s+1}_{q,1}}
\|\theta_1-\theta_2\|_{B^{s+1}_{q,1}} + \|\theta_2\|_{B^{s+1}_{q,1}}\|\dv(\bu_1-\bu_2)\|_{B^{s+1}_{q,1}})\\
&\quad \leq C(\|\bu_1\|_{B^{s+2}_{q,1}}\|\pd_t(\theta_1-\theta_2)\|_{L_1((0,T), B^{s+1}_{q,1})}
+ \|\pd_t\theta_2\|_{L_1((0, T), B^{s+1}_{q,1})}\|\bu_1-\bu_2\|_{B^{s+2}_{q,1}})
\end{align*}
Using $E_T(\theta_i, \bu_i) \leq \omega$ ($i=1,2$), we have 
$$
\|\theta_1\dv\bu_1-\theta_2\dv\bu_2\|_{L_1((0, T), B^{s+1}_{q,1}) }
\leq C\omega E_T(\theta_1-\theta_2, \bu_1-\bu_2).
$$ 
Write 
\begin{align*}
F(\theta_1+\theta_0, \bu_1) - F(\theta_2+\theta_0, \bu_2)
&= (\theta_1-\theta_2)((\BI - \BA_{\bu_1}):\nabla\bu_1 \\
& -(\theta_0+\theta_2)(\BA_{\bu_1} - \BA_{\bu_2}):\nabla \bu_1
+ (\theta_0+\theta_2)(\BI-\BA_{\bu_2}):\nabla(\bu_1-\bu_2).
\end{align*}
Set $\BI-\BA_{\bu} = F(\int^t_0\nabla \bu)$ and 
write
\begin{align*}
&F(\int^t_0\nabla \bu_1\,\d\ell) - F(\int^t_0\nabla\bu_2\,\d\ell) 
= \int^1_0F'(\int^1_0 (\nabla\bu_2+ m\nabla(\bu_1-\bu_2))\,\d\ell)\,\d m
\int^t_0\nabla(\bu_1-\bu_2)\,\d\ell  \\
& = \Bigl\{F'(0) + \int^1_0\int^1_0F''(n\int^1_0 (\nabla\bu_2+ m\nabla(\bu_1-\bu_2))\,\d\ell)\,\d m \d n\Bigr\}
\int^t_0\nabla(\bu_1-\bu_2)\,\d\ell.
\end{align*}
By \eqref{defin:2}, we have 
\begin{align*}
&\sup_{t \in (0, T)}\Bigl\|n\int^1_0 (\nabla\bu_2+ m\nabla(\bu_1-\bu_2))\,\d\tau\Bigr\|_{L_\infty}\\
&\leq  (1-m)\sup_{t \in (0, T)}\Bigl\|\int^1_0 \nabla\bu_2\,\d\tau\Bigr\|_{L_\infty}
+ m\sup_{t \in (0, T)} \Bigl\|\int^1_0 \nabla\bu_1\,\d\tau\Bigr\|_{L_\infty}
\leq Cc_1 
\end{align*}
by Lemmas \ref{lem:APH} and \ref{lem:Hasp},  we have
$$\|F(\int^t_0\nabla \bu_1\,\d\ell) - F(\int^t_0\nabla\bu_2\,\d\ell) \|_{B^{s+1}_{q,1}}
\leq C\|\bu_1-\bu_2\|_{L_1((0, T), B^{s+2}_{q,1})}.
$$
Thus, by Lemma \ref{lem:APH} , \eqref{theta:1} and \eqref{nonfun:1}, we have
\begin{align*}
&\|F(\theta_1+\theta_0, \bu_1) - F(\theta_2+\theta_0, \bu_2)\|_{B^{s+1}_{q,1}}\\
&\quad \leq C\{\|\theta_1-\theta_2\|_{B^{s+1}_{q,1}}\|\BI - \BA_{\bu_1}\|_{B^{s+1}_{q,1}}\|\nabla\bu_1\|_{B^{s+1}_{q,1}}
+\|\theta_0+\theta_2\|_{B^{s+1}_{q,1}}\|\BA_{\bu_1} - \BA_{\bu_2}\|_{B^{s+1}_{q,1}}\|\nabla \bu_1\|_{B^{s+1}_{q,1}}\\
&\quad + \|\theta_0+\theta_2\|_{B^{s+1}_{q,1}}\|\BI-\BA_{\bu_2}\|_{B^{s+1}_{q,1}}
\|\nabla(\bu_1-\bu_2)\|_{B^{s+1}_{q,1}}\}\\
&\quad\leq C(\|\pd_t(\theta_1-\theta_2)\|_{L_1((0, T), B^{s+1}_{q,1})}\|\bu_1\|_{L_1((0, T), B^{s+1}_{q,1})}
\|\nabla\bu_1\|_{B^{s+1}_{q,1}}\\
&\quad + (\|\theta_0\|_{B^{s+1}_{q,1}}+\|\pd_t\theta_2\|_{L_1((0, T), B^{s+1}_{q,1})})
\|\bu_1-\bu_2\|_{L_1((0, T), B^{s+2}_{q,1}}
\|\nabla \bu_1\|_{B^{s+1}_{q,1}}
\\
&\quad +(\|\theta_0\|_{B^{s+1}_{q,1}}+\|\pd_t\theta_2\|_{L_1((0, T), B^{s+1}_{q,1})})
\|\nabla\bu_2\|_{L_1((0, T), B^{s+1}_{q,1})}\|\nabla(\bu_1-\bu_2)\|_{B^{s+1}_{q,1}}).
\end{align*}
Using the conditions: $E_T(\theta_i, \bu_i) \leq \omega$ ($i=1,2$), we have
$$\|F(\theta_1+\theta_0, \bu_1) - F(\theta_2+\theta_0, \bu_2)\|_{L_1((0, T), B^{s+1}_{q,1})}
\leq C(\omega+\omega^2)E_T(\theta_1-\theta_2, \bu_1-\bu_2),
$$
where $C$ depends on $\|\eta_0\|_{B^{s+1}_{q,1}}$. In fact, we estimate 
$$\|\theta_0\|_{B^{s+1}_{q,1}}+\|\pd_t\theta_2\|_{L_1((0, T), B^{s+1}_{q,1})}
\leq \|\theta_0-\eta_0\|_{B^{s+1}_{q,1}} + \|\eta_0\|_{B^{s+1}_{q,1}} + 
\|\pd_t\theta_2\|_{L_1((0, T), B^{s+1}_{q,1})}
\leq 2\omega + \|\eta_0\|_{B^{s+1}_{q,1}}.
$$
Summing up, we have obtained
\begin{equation}\label{diff:4}
\|\BF\|_{L_1((0, T), B^{s+1}_{q,1})} 
\leq C(\omega+ \omega^2)E_T(\theta_1-\theta_2, \bu_1-\bu_2),
\end{equation}
for some constant $C$ depending on $\|\eta_0\|_{B^{s+1}_{q,1}}$. \par 

Now, we treat $\BG$.  First, we estimate $\tilde\bG(\theta_1, \bu_1) - \tilde\bG(\theta_2, \bu_2)$.
Write
\begin{align*}
&\tilde\bG(\theta_1, \bu_1) - \tilde\bG(\theta_2, \bu_2) 
 = (\eta^\epsilon_0-\theta_0)\pd_t(\bu_1-\bu_2) \\
&+ \nabla (P(\theta_0+\theta_1) - P(\theta_0) - P'(\eta^\epsilon_0)\theta_1
-(P(\theta_0+\theta_2) - P(\theta_0) - P'(\eta^\epsilon_0)\theta_2))\\
& = (\eta^\epsilon_0-\theta_0)\pd_t(\bu_1-\bu_2) +
\nabla \int^1_0P''(\eta^\epsilon_0 + \ell(\theta_0 - \eta^\epsilon_0))\,\d\ell
(\theta_0-\eta^\epsilon_0)(\theta_1-\theta_2) \\
&+\nabla\{\int^1_0(1-\ell)(P''(\theta_0+\ell\theta_1) - P''(\theta_0+\ell\theta_2))\,\d\ell \theta_1^2
+ \int^1_0(1-\ell)(P''(\theta_0+\ell\theta_2)\,\d\ell (\theta_1^2-\theta_2^2) \}.
\end{align*}
Writing $\eta^\epsilon_0 + \ell(\theta_0-\eta^\epsilon_0) = \eta_0 + (1-\ell)(\eta^\epsilon_0-\eta_0)
+ \ell(\theta_0-\eta_0)$, using \eqref{domain:4.1} and \eqref{assump:0}, we may assume that 
$$\rho_1/2-\gamma_* < \tilde\eta^\epsilon_0 + \ell(\theta_0-\eta_0^\epsilon)
< 2\rho_2-\gamma_*$$
for any $\ell \in (0, 1)$, and so we write
\begin{align*}
\int^1_0P''(\eta^\epsilon_0 + \ell(\theta_0 - \eta^\epsilon_0))\,\d\ell
= \int^1_0\int^1_0 P'''(\gamma_* + m(\tilde\eta^\epsilon_0 + 
 \ell(\theta_0 - \eta^\epsilon_0))(\tilde\eta^\epsilon_0 + 
 \ell(\theta_0 - \eta^\epsilon_0))\,\d\ell\d m + P''(\gamma_*).
\end{align*}
Thus,  by Lemmas \ref{lem:APH} and \ref{lem:Hasp} and \eqref{smalldist:1}, we have
\begin{align*}
&\|\nabla( \int^1_0P''(\eta^\epsilon_0 + \ell(\theta_0 - \eta^\epsilon_0))\,\d\ell
(\theta_0-\eta^\epsilon_0)(\theta_1-\theta_2))\|_{B^s_{q,1}}
\leq C(\gamma_*, \|\tilde\eta_0\|_{B^{s+1}_{q,1}})\omega\|\theta_1-\theta_2\|_{B^{s+1}_{q,1}}.
\end{align*}
Write
$$\theta_0 + \ell\theta_2 + m(\theta_0 + \ell\theta_1-(\theta_0+\ell\theta_2))
= \eta_0 + (\theta_0-\eta_0) + \ell\theta_2 + m\ell(\theta_1-\theta_2).
$$
Since 
\begin{align*}
\|(\theta_0-\eta_0) + \ell\theta_2 + m\ell(\theta_1-\theta_2)\|_{L_\infty}
&\leq C(\|\theta_0-\eta_0\|_{B^{s+1}_{q,1}}
+\ell(1-m)\|\theta_2\|_{B^{s+1}_{q,1}} + \ell m\|\theta_1\|_{B^{s+1}_{q,1}})\\
&\leq C(\omega+\sum_{i=1,2}\|\pd_t\theta_i\|_{L_1((0, T), B^{s+1}_{q,1})})
\leq C\omega,
\end{align*}
we may assume that 
$$\rho_1/2-\gamma_* < \tilde\eta_0 + (\theta_0-\eta_0)+\ell\theta_2
+ m\ell(\theta_1-\theta_2) < 2\rho_2-\gamma_*,$$
and so, we write
\begin{align*}
&\int^1_0(1-\ell)(P''(\theta_0+\ell\theta_1)-P''(\theta_0 + \ell\theta_2))\,\d\ell\theta_1^2\\
&= \int^1_0\int^1_0(1-\ell)P'''(\theta_0+\ell\theta_2+m\ell(\theta_1-\theta_2))
(\theta_1-\theta_2)\,\d\ell\,\d m\, \theta_1^2\\
& = \int^1_0\int^1_0\int^1_0(1-\ell)P''''(\gamma_*+n(\tilde\eta_0+(\theta_0-\eta_0)+\ell\theta_2
+m\ell(\theta_1-\theta_2))) \\
&\qquad \times(\tilde\eta_0+(\theta_0-\eta_0)+\ell\theta_2+m\ell(\theta_1-\theta_2))
\,\d\ell\d m\d n\,(\theta_1-\theta_2)\theta_1^2
+ \frac12P'''(\gamma_*)(\theta_1-\theta_2)\theta_1^2.
\end{align*}
By Lemmas \ref{lem:APH} and \ref{lem:Hasp}, and \eqref{theta:1}, 
we have
\begin{align*}
&\|\nabla(\int^1_0(1-\ell)(P''(\theta_0+\ell\theta_1)-P''(\theta_0 + \ell\theta_2))\,\d\ell\theta_1^2)
\|_{B^s_{q,1}}\\
&\quad \leq C(\gamma_*, \|\tilde\eta_0\|_{B^{s+1}_{q,1}})\|\pd_t\theta_1\|_{L_1((0, T), B^{s+1}_{q,1}}^2
\|\theta_1-\theta_2\|_{B^{s+1}_{q,1}}.
\end{align*}
Concerning the last term, we write $\theta_0 + \ell\theta_2 = \eta_0 + (\theta_0-\eta_0) + \ell\theta_2$.  Since
$$\|\theta_0-\eta_0 + \ell\theta_2\|_{L_\infty} \leq C(\|\theta_0-\eta_0\|_{B^{s+1}_{q,1}}+
\|\theta_2\|_{B^{s+1}_{q,1}})
\leq C(\|\theta_0-\eta_0\|_{B^{s+1}_{q,1}} + \|\pd_t\theta_2\|_{L_1((0, T), B^{s+1}_{q,1})})
\leq C\omega,$$
choosing $\omega>0$ small enough, we may assume that 
$$\rho_1/2-\gamma_* < \tilde\eta_0 + (\theta_0-\eta_0) + \ell\theta_2 < 2\rho_2-\gamma_*$$
for any $\ell \in (0, 1)$.  Thus, writing
\begin{align*}
&\int^1_0(1-\ell)P''(\theta_0 + \ell\theta_2)\,\d\ell\,(\theta_1^2-\theta_2^2)\\
&= \Bigl\{\frac12P''(\gamma_*)
+ \int^1_0\int^1_0(1-\ell)P'''(\gamma_*+m(\tilde\eta_0 + \theta_0-\eta_0+\ell\theta_2))
(\tilde\eta_0 + \theta_0-\eta_0+\ell\theta_2)\,\d\ell\d m\}\\
&\qquad\times(\theta_1-\theta_2)(\theta_1+\theta_2),
\end{align*}
By Lemmas \ref{lem:APH} and \ref{lem:Hasp}, and \eqref{theta:1}, we have
\begin{align*}
&\|\nabla(\int^1_0(1-\ell)P''(\theta_0 + \ell\theta_2)\,\d\ell\,(\theta_1^2-\theta_2^2))
\|_{B^s_{q,1}} \\
&\quad \leq C(\gamma_*, \|\tilde\eta_0\|_{B^{s+1}_{q,1}})
(\|\tilde\eta^\epsilon_0\|_{B^{s+1}_{q,1}}+\|\theta_0-\eta_0\|_{B^{s+1}_{q,1}}
+\|\pd_t\theta_2\|_{L_1((0, T),B^{s+1}_{q,1})})\\
&\qquad\times (\|\pd_t\theta_1\|_{L_1((0, T),B^{s+1}_{q,1})}
+ \|\pd_t\theta_2\|_{L_1((0, T),B^{s+1}_{q,1})})\|\theta_1-\theta_2\|_{B^{s+1}_{q,1}}.
\end{align*}
Summing up, we have obtained 
\begin{align*}
&\|\tilde\bG(\theta_1, \bu_1) - \tilde\bG(\theta_2, \bu_2) \|_{L_1((0, T), B^s_{q,1}} 
\leq C\|\eta^\epsilon_0-\theta_0\|_{B^{N/q}_{q,1}}\|\pd_t(\bu_1-\bu_2)\|_{L_1((0, T), B^s_{q,1}} \\
&\quad  + C(\gamma_*, \|\eta_0\|_{B^{s+1}_{q,1}})(\omega 
+ \|\pd_t\theta_1\|_{L_1((0, T), B^{s+1}_{q,1})}^2
+ \sum_{i=1}^2\|\pd_t\theta_i\|_{L_1((0, T), B^{s+1}_{q,1})})\|\theta_1-\theta_2\|_{L_1((0, T), B^{s+1}_{q,1})}.
\end{align*}
Since $E_T(\theta_i, \bu_i) \leq \omega$, using \eqref{semi:1}, we have
$$\|\tilde\bG(\theta_1, \bu_1) - \tilde\bG(\theta_2, \bu_2) \|_{B^s_{q,1}} 
\leq C(\omega+\omega^2)E_T(\theta_1-\theta_2, \bu_1-\bu_2).
$$
Finally, we estimate $\bG(\theta_0+\theta_1, \bu_1)-\bG(\theta_0+\theta_2, \bu_2)$. 
We write
\begin{align*}
&\bG(\theta_0+\theta_1, \bu_1)-\bG(\theta_0+\theta_2, \bu_2)\\
&= ((\BA_{\bu_2}^\top)^{-1}-(\BA_{\bu_1}^\top)^{-1})(\theta_0+\theta_1)\pd_t\bu_1
+ (\BI-(\BA_{\bu_2}^\top)^{-1})(\theta_1-\theta_2)\pd_t\bu_1
+ (\BI-(\BA_{\bu_2}^\top)^{-1})(\theta_0+\theta_2) \pd_t(\bu_1-\bu_2)\\
& + \alpha ((\BA_{\bu_1}^\top)^{-1}-(\BA_{\bu_2}^\top)^{-1})\dv(\BA_{\bu_1}\BA_{\bu_1}^\top:\nabla\bu_1)
+ \alpha((\BA_{\bu_2}^\top)^{-1}-\BI)\dv((\BA_{\bu_1}\BA_{\bu_1}^\top-\BA_{\bu_2}\BA_{\bu_2}^\top):\nabla\bu_1)\\
&+  \alpha((\BA_{\bu_2}^\top)^{-1}-\BI)\dv(\BA_{\bu_2}\BA_{\bu_2}^\top:\nabla(\bu_1-\bu_2))
+ \alpha\dv((\BA_{\bu_1}-\BA_{\bu_2})(\BA_{\bu_1}^\top-\BI):\nabla\bu_1) \\
& + \alpha\dv( \BA_{\bu_2}(\BA_{\bu_1}^\top-\BA_{\bu_2}^\top):\nabla\bu_1) 
+ \alpha\dv (\BA_{\bu_2}(\BA_{\bu_2}^\top-\BI):\nabla(\bu_1-\bu_2) )\\
&+ \beta\nabla((\BA_{\bu_1}^\top - \BA_{\bu_2}^\top):\nabla\bu_1)
+ \beta\nabla((\BA_{\bu_2}^\top-\BI):\nabla(\bu_1-\bu_2)).
\end{align*}
Employing the similar argument to the proof of \eqref{diff:4}, we have
\begin{align*}
\|(\BA_{\bu_1}^\top)^{-1} -(\BA_{\bu_2}^\top)^{-1}\|_{B^{N/q}_{q,1}} 
&\leq C\|\nabla(\bu_1-\bu_2)\|_{L_1((0, T), B^{N/q}_{q,1})}, \quad
\|(\BI-(\BA_{\bu_i})^\top\|_{B^{N/q}_{q,1}} \leq C\|\nabla \bu_i\|_{L_1((0, T), B^{N/q}_{q,1})}, \\
\|\BA_{\bu_1}^\top -\BA_{\bu_2}^\top\|_{B^{N/q}_{q,1}} 
&\leq C\|\nabla(\bu_1-\bu_2)\|_{L_1((0, T), B^{N/q}_{q,1})}, \quad 
\|\BA_{\bu_i}\BA_{\bu_i}^\top-\BI\|_{B^{s+1}_{q,1}}\leq C\|\nabla\bu_i\|_{L_1((0, T), B^{s+1}_{q,1})}, \\
	\|\BA_{\bu_1}\BA_{\bu_1}^\top - \BA_{\bu_2}\BA_{\bu_2}^\top\|_{B^{N/q}_{q,1}}
&\leq  C\|\nabla(\bu_1-\bu_2)\|_{L_1((0, T), B^{N/q}_{q,1})}.
\end{align*}
Therefore, by Lemmas \ref{lem:APH} and \ref{lem:Hasp}, we have
\begin{align*}
&\|\bG(\theta_0+\theta_1, \bu_1)-\bG(\theta_0+\theta_2, \bu_2)\|_{B^s_{q,1}} \\
& \leq C\{\|(\BA_{\bu_2}^\top)^{-1}-(\BA_{\bu_1}^\top)^{-1}\|_{B^{N/q}_{q,1}}
(\|\theta_0\|_{B^{N/q}_{q,1}} + \|\pd_t\theta\|_{L_1((0, T), B^{N/q}_{q,1})})
\|\pd_t\bu_1\|_{B^s_{q,1}} \\
&+ \|\nabla\bu_2\|_{L_1((0, T), B^{N/q}_{q,1})}\|\pd_t(\theta_1-\theta_2)\|_{L_1((0, T), 
B^{N/q}_{q,1})}\|\pd_t\bu_1\|_{B^s_{q,1}} \\
&  + \|\nabla\bu_2\|_{L_1((0, T), B^{N/q}_{q,1})}(\|\theta_0\|_{B^{N/q}_{q,1}}+
\|\pd_t\theta_2\|_{L_1((0, T), B^{N/q}_{q,1})})\|\pd_t(\bu_1-\bu_2)\|_{B^s_{q,1}} \\
&+ \|(\BA_{\bu_2}^\top)^{-1}-(\BA_{\bu_1}^\top)^{-1}\|_{B^{N/q}_{q,1}}
(1+\|\nabla\bu_1\|_{B^{s+1}_{q,1}})\|\bu_1\|_{B^{s+1}_{q,1}} \\
&+ \|\nabla\bu_2\|_{L_1((0, T), B^{N/q}_{q,1})}\|\BA_{\bu_1}\BA_{\bu_1}^\top
-\BA_{\bu_2}\BA_{\bu_2}^\top\|_{B^{s+1}_{q,1}}\|\nabla\bu_1\|_{B^{s+1}_{q,1}}
\\
&+  \|\nabla\bu_2\|_{L_1((0, T), B^{N/q}_{q,1})}
(1+ \|\nabla\bu_2\|_{L_1((0, T), B^{N/q}_{q,1})})\|\nabla(\bu_1-\bu_2) \|_{B^{s+1}_{q,1}} \\
&+ \|\nabla(\bu_1-\bu_2)\|_{L_1((0, T), B^{s+1}_{q,1})}\|\nabla\bu_1\|_{L_1((0, T), B^{s+1}_{q,1})}
\|\nabla\bu_1\|_{B^{s+1}_{q,1}} \\
& + (1+\|\nabla\bu_2\|_{L_1((0, T), B^{s+1}_{q,1})})\|\nabla(\bu_1-\bu_2)\|_{L_1((0, T), B^{s+1}_{q,1})}
\|\nabla\bu_1\|_{B^{s+1}_{q,1}} \\
&+ (1+\|\nabla\bu_2\|_{L_1((0, T), B^{s+1}_{q,1})})\|\nabla\bu_2\|_{L_1((0, T), B^{s+1}_{q,1})}
\|\nabla(\bu_1-\bu_2)\|_{B^{s+1}_{q,1}} \\
&+ \|\nabla(\bu_1 -\bu_2)\|_{L_1((0, T), B^{s+1}_{q,1})}\|\nabla\bu_1\|_{B^{s+1}_{q,1}} 
+ \|\nabla\bu_2\|_{L_1((0, T), B^{s+1}_{q,1})}\|\nabla(\bu_1-\bu_2)\|_{B^{s+1}_{q,1}}\}.
\end{align*}
We have $\|\theta_0\|_{B^{s+1}_{q,1}} \leq \|\theta_0-\eta_0\|_{B^{s+1}_{q,1}} + \|\tilde\eta_0\|_{B^{s+1}_{q,1}}
\leq C\omega + \|\tilde\eta_0\|_{B^{s+1}_{q,1}}$. 
Thus, we have
$$\|\bG(\theta_0+\theta_1, \bu_1)-\bG(\theta_0+\theta_2, \bu_2)\|_{L_1(0, T), B^s_{q,1})} 
\leq C(\gamma_*, \|\tilde\eta_0\|_{B^{s+1}_{q,1}})(\omega+\omega^2)E_T(\theta_1-\theta_2,
\bu_1-\bu_2).$$
Summing up, we have obtained \eqref{diff:3}. \par
Combining \eqref{diff:2} and \eqref{diff:3} yields 
$$E_T(\eta_1-\eta_2, \bw_1-\bw_2) \leq Ce^{\gamma T}(\omega+\omega^2)
E_T(\theta_1-\theta_2, \bu_1,\bu_2).$$
Thus, first we choose $\omega>0$ so small that $Ce(\omega+\omega^2) \leq 1/2$, 
and second we choose  $T>0$ so small that $\gamma T \leq 1$, we have 
$$E_T(\eta_1-\eta_2, \bw_1-\bw_2) \leq (1/2)
E_T(\theta_1-\theta_2, \bu_1,\bu_2),
$$
which shows that 
$\Phi$ is a contraction map from $S_{T, \omega}$ into itself. Therefore, by the Banach fixed
point theorem, $\Phi$ has a unique fixed point $(\eta, \bw) \in S_{T, \omega}$.
In \eqref{st:2}, setting $(\eta, \bw) = (\theta, \bu)$ and recalling 
$\rho = \theta_0+\theta$ and $\tilde\bG(\theta, \bu) = (\eta^\epsilon_0-\theta_0-\theta)\pd_t\bu
- \nabla(P(\theta_0+\theta)-P(\theta_0)-P'(\eta^\epsilon_0)\theta)$, we see that $\theta$ and $\bu$ satisfy
equations:
\begin{equation}\label{st:4}\left\{\begin{aligned}
\pd_t\theta+\eta^\epsilon_0\dv\bu = (\eta^\epsilon_0-\theta_0-\theta)\dv\bu +  F(\theta_0+\theta, \bu)& 
&\quad&\text{in $\HS\times(0, T)$}, \\
\eta^\epsilon_0\pd_t\bu - \alpha\Delta \bu  -\beta\nabla\dv\bu
+  \nabla(P'(\eta^\epsilon_0) \theta) = -\nabla P(\theta_0) + \bG(\theta_0+\theta, \bu)
- \tilde\bG(\theta, \bu)
& &\quad&\text{in $\HS\times(0, T)$}, \\
\bu|_{\pd\HS} =0, \quad (\eta, \bu)|_{t=0} = (0, \bu_0)
& &\quad&\text{in $\HS$}.
\end{aligned}\right.\end{equation}
Thus, setting $\rho = \theta_0 + \theta$, from \eqref{st:4} it follows that 
$\rho$ and $\bu$ satisfy equations \eqref{ns:2}.  Moreover, 
$(\rho, \bu)$ belongs to $S_{T,\omega}$, which completes the proof of 
Theorem \ref{thm:2}.

{\bf A proof of Theorem \ref{thm:1}}~ As was mentioned at the beginning of 
Subsec. \ref{sec.1.1}, $y= X_\bu(x, t)$ is a $C^1$ diffeomorphism from $\Omega$ onto itself
for any $t \in (0, T)$, because $\bu \in L_1((0, T), B^{s+2}_{q,1}(\Omega)^N)$.
Let $x = X_\bu^{-1}(y, t)$ be the inverse of $X_\bu$. For any function 
$F \in B^s_{q,1}(\HS)$, $1 < q < \infty$, $s \in \BR$, it follow from the chain rule that 
$$\|F\circ X_\bu^{-1}\|_{B^s_{q,1}(\HS)} \leq C\|F\|_{B^s_{q,1}(\HS)}$$
with some constant $C>0$ (cf. Amann \cite[Theorem 2.1]{Amann00}). 
Let $(\rho, \bv) = (\theta, \bu)\circ X_\bu^{-1}$ and $\BA_\bu = (\nabla_yX_\bu)^{-1}$.
Let $\BA_\bu^\top = (A_{jk})$.  Since there holds
\begin{gather*}
\nabla_y(\rho, \bv) = (\BA_\bu^\top \nabla_x(\theta, \bu))\circ X_\bu^{-1}, \\
\pd_{y_j}\pd_{y_k}\bv = \sum_{\ell, \ell'}
A_{j\ell}\pd_{y_\ell}(A_{k\ell'}\pd_{y_{\ell'}}\bu))\circ X_\bu^{-1}
\quad(j, k =1, \ldots, N).
\end{gather*}
Concerning the time derivative of $\rho$ and $\bv$, we rely on the relation:
$$\pd_t(\rho, \bv) = \pd_t(\theta, \bu)\circ X_\bu^{-1}
-((\bu\circ X_\bu^{-1})\cdot\nabla_y)(\rho, \bv).
$$
Therefore, by Theorem \ref{thm:2} and Lemma \ref{lem:APH}, 
we arrive at \eqref{main.reg}.  This completes the proof of Theorem \ref{thm:1}. 


\end{document}